\newtheorem{teor}{teor}
\newtheorem{cor}[teor]{Corollary}
\newtheorem{prop}{Proposition}
\newtheorem{lemma}{Lemma}
\newtheorem{proof}{Proof}
\newtheorem{defi}{Definition}
\newtheorem{oss}{Remark}
\newcommand{\fr}{\rightarrow}
\newcommand{\sig}{\mathcal S}
\newcommand{\C}{\mathcal C}
\newcommand{\R}{\mathcal R}
\newcommand{\D}{\phi}
\newcommand{\M}{\mathfrak M}
\newcommand{\Sim}{\mathfrak F}
\newcommand{\N}{\mathbb N}
\newcommand{\pro}{\; \square \;}
\newcommand{\sym}{\tau}
\newcommand{\e}{\mathit e}
\newcommand{\id}{\textbf{id}}
\newcommand{\comm}{\bigodot}
\newcommand{\rew}[2]{\xymatrix@C-1pc{{#1}\ar@3[r]&{#2}} }
\def\MLine#1{\par\hspace*{-\leftmargin}\parbox{\textwidth}{\[#1\]}}
\title{A Constructive Proof of Coherence for Symmetric Monoidal Categories Using Rewriting }
\author{Matteo Acclavio}
\date{}
\newcommand{\frr}{\Rightarrow}						
\begin{document}

\maketitle

\begin{abstract}

A symmetric monoidal category is a category equipped with an associative and commutative (binary) product and an object which is the unit for the product. In fact, those properties only hold up to natural isomorphisms which satisfy some \emph{coherence conditions}. The \emph{coherence theorem} asserts the commutativity of all \emph{linear} diagrams involving the left and right unitors, the associator and the braiding.

We prove the coherence for symmetric monoidal categories using a constructive homotopical method based on higher dimensional rewriting. For that scope, we detail the convergence proof of  Lafont's string diagram rewriting system which presents the isomorphisms of these theories.
\end{abstract}

\section{Introduction}

As remarked by Mac Lane in \cite{MacNat}, the associative law $a(bc)=(ab)c$ implies the ``general associative law'' involving any iterate product of the same factors in the same order. When the associativity is given by an isomorphism, as in monoidal categories, it becomes necessary to impose some conditions to generalize this remark. This \emph{coherence condition} is given by the \emph{pentagonal identity}. Huet \cite{GH} noticed that this proof can be seen as an application of the Knuth-Bendix procedure \cite{KB}. This idea is explored further by Guiraud, Malbos and Mimram in \cite{GuirMalHom} where is shown how this procedure defines new coherence conditions in \emph{higher dimensional rewriting systems}.

As application of these methods, in this paper we propose an alternative proof of the coherence theorem for symmetric monoidal categories by means of \emph{string diagrams rewriting}. Moreover, the interested reader might find here a detailed version of the confluence proof proposed by Y. Lafont for the rewriting system given in \cite{LafBool}. Even if  this rewriting theory is characterized by a more complex theory of confluence (see \cite{GuirMalHD}, \cite{LafBool}, \cite{mim3D}) due to the presence of different compositions as in higher dimensional categories \cite{HO}, we are able to give a finite homotopy base from the confluence diagrams of \emph{critical peaks} as suggested by the method used by Squier for his theorem about string rewriting systems (\cite{Squier}, \cite{Squier2}, \cite{LafFin}, \cite{LafChurc}).

In a similar way, Guiraud and Malbos in \cite{GuirMalCohe} give a non-constructive proof of coherence (for both symmetric and braided monoidal categories) by means of polygraphs \cite{Bur} using some additional invariance results for \emph{Tietze equivalent} rewriting systems and under the assumption of the existence of a convergent \emph{diagram rewriting system} (the one given by Lafont).

In order to make constructive our proof, in this paper we detail the solutions of critical peaks and we exhibit the analogous of Kelly's result \cite{Kelly} in the symmetrical case. In fact, similarly as in the monoidal case the two coherence conditions guarantee the coherence of all five the critical peaks of the rewriting system, in our presentation of symmetric monoidal categories we are able to prove the coherence of all seventeen critical peaks of the system with five coherence conditions.

We conclude the proof by giving a procedure to use the confluence of Lafont's rewriting system in order to find a decomposition any confluence diagram into smaller diagrams which correspond to the initial coherence conditions of the theory.

\section{Background}

\subsection{Monoidal category}
\begin{defi}\label{MC}
A \emph{monoidal category} is a category $\C$ equipped with a (bi)functor $\pro : \C \times \C \fr \C$ and an object $e$ of $\C$ (that can be seen as a functor $\e : * \fr \C$ ) together with  three families of natural isomorphisms 
$$\alpha=\alpha_{xyz}: (x\pro y)\pro z \fr x\pro (y\pro z) ,$$ $$ \lambda=\lambda_x:  \e  \pro x \fr x ,\qquad  \rho=\rho_x : x \pro  \e \fr x ,$$
such that the following diagrams commute:
\begin{itemize}
\item \emph{pentagonal identity}: 

\begin{center}
\begin{tikzpicture}[node distance=1.6 cm, auto]
  \node (0)  {$((x\pro y ) \pro z)\pro t)$};
  \node (1) [right of=0, above of=0] {$(x \pro (y\pro z)) \pro t)$};
\node(1b)[right of =1]{};
  \node (2) [right of=1b] {$ x \pro ((y\pro z) \pro t$};
  \node (3) [below of=2, right of =2] {$x\pro (y\pro (z\pro t))$}; 
 \node(4)[below of =0, right of =0]{};
\node(5)[right of=4]{$(x \pro y ) \pro (z\pro t)$};

\draw [->] (0) to node{$ \alpha \pro \id$ } (1);
\draw [->] (0) to node[swap]{$\alpha $} (5);
\draw [->] (5) to node[swap]{$\alpha$} (3);
\draw [->] (1) to node{$ \alpha$} (2);
\draw [->] (2) to node{$\id \pro \alpha $} (3);
\end{tikzpicture}
\end{center}

\item \emph{triangular identity}:
\begin{center}
\begin{tikzpicture}[node distance=3 cm, auto]
  \node (0) {$(x \pro \e)\pro y$};
  \node (1) [right of=0] {$x\pro y$};
  \node (3) [below of=0, right of =0 , node distance =1.5cm ] {$x \pro (\e \pro y)$};

\draw [->] (0) to node{$\rho \pro \id $} (1);
\draw [->] (0) to node[swap]{$\alpha$} (3);
\draw [->] (3) to node[swap]{$\id \pro \lambda$} (1);
\end{tikzpicture}
\end{center}

\end{itemize}
\end{defi}

We note that the pentagonal and triangular identities correspond to the confluence of two critical peaks for the rewriting system given by $\alpha$, $\lambda$ and $\rho$. In fact, there are three other critical peaks which are handled by the following lemma, which is proved in section \ref{cohemonodia}.

\begin{teor}[Coherence of monoidal categories]\label{cohemc}\hfill \\
In an arbitrary monoidal category, every diagram made of $\lambda, \rho$ and $ \alpha$  is commutative.
\end{teor}

\begin{lemma}[Kelly \cite{Kelly}]\hfill \\
The commutativity of the following diagrams  is derivable from the pentagonal and the triangular identities:

\begin{center}
\resizebox{.8\width}{!}{
\begin{tikzpicture}[node distance=3 cm, auto]
  \node (0) {$(\e \pro x)\pro y$};
  \node (1) [right of=0] {$x\pro y$};
  \node (3) [below of=0, right of =0 , node distance =1.5cm ] {$\e \pro (x \pro y)$};

\draw [->] (0) to node{$\lambda \pro \id$} (1);
\draw [->] (0) to node[swap]{$\alpha$} (3);
\draw [->] (3) to node[swap]{$\lambda$} (1);
\end{tikzpicture}
\qquad
\begin{tikzpicture}[node distance=3 cm, auto]
  \node (0) {$(x \pro y)\pro \e$};
  \node (1) [right of=0] {$x\pro y$};
  \node (3) [below of=0, right of =0 , node distance =1.5cm ] {$x \pro (y \pro \e)$};

\draw [->] (0) to node{$\rho$} (1);
\draw [->] (0) to node[swap]{$\alpha$} (3);
\draw [->] (3) to node[swap]{$\id \pro \rho$} (1);
\end{tikzpicture}
\qquad
\begin{tikzpicture}[node distance=3 cm, auto]
  \node (0) {$\e \pro \e$};
  \node (1) [right of=0] {$ \e {\color{white} \pro \e}$};
\draw[->] (0.10) to node {$\lambda$} (1.170);
\draw[->] (0.350) to node[swap]{$\rho$} (1.190);
\end{tikzpicture}
}
\end{center}

\end{lemma}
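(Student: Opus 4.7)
The plan is to prove the three diagrams in the order (1), (3), (2), using in each case the pentagon and triangle together with the naturality of $\alpha, \lambda, \rho$ and the fact that these natural transformations are isomorphisms (so that common factors can be cancelled).

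For diagram (1), I would start from the pentagonal identity applied to the $4$-tuple $(\e, \e, x, y)$, giving the commutative pentagon at $((\e \pro \e) \pro x) \pro y$ whose left edge contains the composite $\alpha_{\e, \e, x} \pro \id_y$. The triangle identity applied to the pair $(\e, x)$ lets us rewrite $\alpha_{\e, \e, x}$ in terms of $\rho_\e \pro \id_x$ and $\id_\e \pro \lambda_x$; combined with the naturality of $\alpha$ in its middle slot (used to push $\lambda_x$ across $\alpha_{\e, -, y}$), this transforms the pentagon into an equation between two morphisms $(\e \pro x) \pro y \to \e \pro (x \pro y)$. After cancelling the isomorphism $\alpha_{\e, \e, x \pro y}$ from the right-hand summit of the pentagon, the equation reduces precisely to $\lambda_{x \pro y} \circ \alpha_{\e, x, y} = \lambda_x \pro \id_y$.

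For diagram (3), I would specialize (1) to $x = y = \e$, obtaining
\[
\lambda_{\e \pro \e} \circ \alpha_{\e, \e, \e} \;=\; \lambda_\e \pro \id_\e .
\]
The naturality of $\lambda$ applied to the arrow $\lambda_\e : \e \pro \e \to \e$ gives $\lambda_\e \circ (\id_\e \pro \lambda_\e) = \lambda_\e \circ \lambda_{\e \pro \e}$, whence $\id_\e \pro \lambda_\e = \lambda_{\e \pro \e}$ after cancelling the isomorphism $\lambda_\e$. The triangle identity at $(\e, \e)$ reads $(\id_\e \pro \lambda_\e) \circ \alpha_{\e, \e, \e} = \rho_\e \pro \id_\e$, and combining these three equalities yields $\lambda_\e \pro \id_\e = \rho_\e \pro \id_\e$. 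Finally, naturality of $\rho$ applied to $\lambda_\e$ and $\rho_\e$ respectively lets us cancel the isomorphism $\rho_{\e \pro \e}$, giving $\lambda_\e = \rho_\e$.

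Diagram (2) is then proved by an argument dual to that of (1), starting from the pentagonal identity applied to $(x, y, \e, \e)$ and using the naturality of $\rho$ in place of $\lambda$. The main obstacle in the whole lemma is the proof of (1): one has to choose the correct instance of the pentagon and orchestrate the applications of triangle and naturality in exactly the right order, keeping track of which $\alpha$'s are cancelled at the end. Once (1) is in place, (3) is a clean specialization and (2) follows by symmetry.
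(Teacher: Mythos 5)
Your overall strategy is the classical equational one of Kelly: instantiate the pentagon at $(\e,\e,x,y)$, feed in triangle instances, and use naturality to cancel. Note that the paper itself proves this lemma quite differently in spirit: it treats the three diagrams as confluence diagrams of critical peaks of the rewriting system $\M$ and, in Appendix~\ref{CellyKelly}, exhibits explicit pastings of $4$-cells in which only the pentagon and triangle $4$-cells (and already-derived cells) occur --- e.g.\ the peak $(\e\pro x)\pro y$ is filled by pasting the pentagon cell precomposed with two $\e$-gates against triangle cells, and the peak $\e\pro\e$ is filled from the triangle together with the already-solved peak $(x\pro y)\pro\e$. Your computation is the term-level shadow of those pastings (with naturality playing the role of the interchange law), so the two routes are parallel in content; yours is more elementary, the paper's is what feeds into its polygraphic coherence machinery.

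However, as written your proof of diagram (1) has a genuine gap at the endgame. Starting from the pentagon at $(\e,\e,x,y)$, using the triangle at $(\e,x)$, naturality of $\alpha$ in the middle slot (with $\lambda_x$) and in the outer slot (with $\rho_\e$), and the triangle at $(\e,x\pro y)$, the isomorphisms available for cancellation are $\alpha_{\e,\e,x}\pro\id_y$ and $\alpha_{\e,\e\pro x,y}$ (equivalently $\alpha_{\e\pro\e,x,y}$), and what remains is
\[
\id_\e \pro (\lambda_x \pro \id_y) \;=\; \id_\e \pro \bigl(\lambda_{x\pro y}\circ \alpha_{\e,x,y}\bigr),
\]
an equation between morphisms $\e\pro((\e\pro x)\pro y)\to \e\pro(x\pro y)$ --- not, as you claim, between morphisms $(\e\pro x)\pro y\to \e\pro(x\pro y)$, and cancelling $\alpha_{\e,\e,x\pro y}$ alone does not ``reduce precisely'' to the statement. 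The indispensable final step, which is the actual crux of Kelly's argument, is the cancellation of the whiskering by $\e\pro(-)$: from $\id_\e\pro f=\id_\e\pro g$ one deduces $f=g$ via naturality of $\lambda$ (namely $\lambda\circ(\id_\e\pro f)=f\circ\lambda$) and invertibility of $\lambda$. You visibly know this trick --- you use it verbatim in your proof of (3) to get $\id_\e\pro\lambda_\e=\lambda_{\e\pro\e}$ --- so the repair is immediate, and the dual remark (cancelling $(-)\pro\e$ via naturality of $\rho$) applies to your sketch of (2); but as stated, the reduction in (1) does not typecheck and the proof does not close.
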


\begin{defi}[Symmetric monoidal category]\label{SMC}\hfill \\
A \emph{symmetric monoidal category} is a monoidal category $\C$ equipped with an extra family of natural isomorphisms called braidings
$$ \sym= \sym_{x y} : x \pro  y \fr y\pro x$$
such that the following diagrams commute:
\begin{itemize}
\item the \emph{hexagonal identity}:
\begin{center}
\begin{tikzpicture}[node distance=1.3 cm, auto]
  \node (0) {$(x \pro y)\pro z$};
  \node (1) [right of=0, below of =0] {$x\pro (y \pro z)$};
\node(c)[right of=1, above of =1] {};
  \node (2) [below of=c, right of =c] {$(y \pro z) \pro x$};
  \node(3)[right of=2, above of =2]{$y\pro (z\pro x)$};
  \node (4) [right of=0, above of =0] {$(y \pro x) \pro z$};
  \node (5) [above of=c, right of =c] {$y \pro (x \pro z)$};

\draw [->] (0) to node[swap]{$\alpha$} (1);
\draw [->] (1) to node[swap]{$\sym$} (2);
\draw [->] (2) to node[swap] {$\alpha$}(3);

\draw [->] (0) to node{$\sym \pro \id$} (4);
\draw[->](4) to node {$\alpha$}(5);
\draw[->](5) to node {$\id \pro \sym$}(3);

\end{tikzpicture}
\end{center}
\item \emph{ involutivity} of $\sym$:
\begin{center}
\begin{tikzpicture}[node distance=2 cm, auto]
  \node (0) {$x \pro y$};
  \node (1) [right of=0] {$x\pro y$};
  \node (3) [below of=0, right of =0 , node distance =1cm ] {$y \pro x$};

\draw[-] (0.5) to node {} (1.175);
\draw[-] (0.355) to node{} (1.185);
\draw [->] (0) to node[swap]{$\sym$} (3);
\draw [->] (3) to node[swap]{$\sym$} (1);
\end{tikzpicture}
\end{center}
\end{itemize}
\end{defi}


\begin{defi} A diagram is \emph{linear}\footnote{In \cite{Mac}, Mac Lane calls this a \emph{formal} diagram} if  every variable appears at most once on each vertex. \end{defi}

\begin{teor}[Coherence of symmetric monoidal categories, \cite{MacNat}]\label{cohesmc}\hfill \\
In an arbitrary symmetric monoidal category, every linear diagram made of $\lambda, \rho, \alpha$ and $ \sym$ is commutative.
\end{teor}

In this paper, in order to have a parallelism between natural isomorphisms and the set of the rewriting system we study, we use an alternative equivalent definition of the symmetric monoidal categories (see \cite{alt}) given given by adding extra (superfluous) family of natural isomorphisms $\gamma$ (which represent the so called \emph{parallel associativity}\cite{jova}) and imposing the commutativity of the two following confluence diagrams which decompose the confluence diagram of the hexagonal identity by means of $\gamma$:

\begin{defi}[Symmetric monodial category]\label{SMC2}
A \emph{symmetric monoidal category} is a monoidal category $\C$ equipped with two extra families of natural isomorphisms respectively called braiding and parallel associativity
$$ \sym= \sym_{x y} : x \pro  y \fr y\pro x \quad \mbox{ and }\quad \gamma= \gamma_{x y z} : x \pro ( y \pro z)\fr y\pro (x\pro z)$$
such that the following diagrams commute:
\begin{center}
\resizebox{.9\width}{!}{
\begin{tikzpicture}[node distance=2 cm, auto]
  \node (0) {$x \pro y$};
  \node (1) [right of=0] {$x\pro y$};
  \node (3) [below of=0, right of =0 , node distance =1cm ] {$y \pro x$};
\node(4)[below of=3, node distance =1cm]{~};
\draw[-] (0.5) to node {} (1.175);
\draw[-] (0.355) to node{} (1.185);
\draw [->] (0) to node[swap]{$\sym$} (3);
\draw [->] (3) to node[swap]{$\sym$} (1);
\end{tikzpicture}
\quad
\begin{tikzpicture}[node distance=1.2 cm, auto]
  \node (0) {$(x \pro y)\pro z$};
  \node (1) [right of=0, above of =0] {$(y\pro x) \pro z$};
  \node (1b)[right of =1]{};
\node(2)[right of=1b] {  $y\pro (x\pro z)$ };
  \node (4) [right of=0, below of =0] {$x \pro (y \pro z)$};
\draw [->] (0) to node{$\sym$} (1);
\draw [->] (1) to node{$\alpha$} (2);
\draw [->] (0) to node[swap]{$\alpha$} (4);
\draw[->](4) to node [swap] {$\gamma$}(2);
  \node(c1)[right of=2, below of =2] {};
  \node(c2)[right of=c1] {};
  \node (22) [above of=c2, right of =c2] {$y\pro (x\pro z)$};
  \node(32)[right of=22, below of =22]{$y\pro (z\pro x)$};
  \node (42) [left of=c2, below of =c2] {$x \pro (y \pro z)$};
  \node (52) [below of=c2, right of =c2] {$(y \pro z) \pro x$};
\draw [->] (22) to node {$\sym$}(32);
\draw[->](42) to node [swap] {$\sym$}(52);
\draw[->](52) to node [swap] {$\alpha$}(32);
\draw [->] (42) to node {$\gamma$}(22);
\end{tikzpicture}
}
\end{center}
\end{defi}

\subsection{String diagrams}

We now recall some basic notions in string diagram rewriting. In this paper we use a particular setting, which can be called \emph{monochrome string diagrams}, since we use no labels on backgrounds neither on strings. For an more general introduction to string diagrams, see \cite{string2} or P. Selinger paper \cite{selinger2010survey}. 

Given $p$ and $q$ natural numbers, a diagram $\phi : p \frr q$ with $p$ \emph{inputs} and $q$ \emph{outputs} is pictured as follows:
$$
\overbrace{\underbrace{
\twocell{d *1 phi *1 d}}_{q}}^{p}$$
Diagrams may be composed in two different ways. If $\phi : p \frr q$ and $\phi': p' \frr q'$ are diagrams, we define:
\begin{itemize}

\item \emph{sequential} composition: if $q=p'$, the diagram $\phi' \circ \phi : p\frr q'$ corresponds to usual composition of maps.

This composition is associative with unit $\id_p: p\frr p$ for each $p\in \N$. In other words, we have $\phi \circ \id_p= \phi= \id_q \circ \phi$. The\emph{ identity diagram} $id_p$ is pictured as follows: $\underbrace{
\twocell{d}}_{p}$

\item \emph{parallel}  composition: the diagram $\phi * \phi': p+p' \frr q+q'$ is always defined.
This composition is associative with unit $\id_0: 0 \frr 0$. In other words, we have $\id_0 * \phi=\phi=\phi * \id_0$. This $\id_0$ is called the \emph{empty diagram}.
\end{itemize}
These two compositions are respectively represented as follows:
$$
\overbrace{\underbrace{
\twocell{(d *0 d)*1( phi *0 phi1)*1(d *0 d)}}_{q+q'}}^{p+p'}
\qquad \qquad \qquad
\overbrace{\underbrace{
\twocell{d *1 phi *1 d *1 phi1 *1 d}}_{q'}}^{p}\; .
$$

Our two compositions satisfy the \emph{interchange rule}: if $\D:p\frr q$ and $\D' : p'\frr q'$, so $ (\id_q * \phi' ) \circ (\phi * \id_{p'})= \phi * \phi' =(\phi * \id_{q'})\circ (\id_p * \phi')$ that corresponds to the following picture:

$$\twocell{(d *0 d) *1 (phi *0 2) *1 (2 *0 phi1) *1 (d *0 d)}=\twocell{(d *0 d) *1 (phi *0 phi1) *1 (d *0 d)}=\twocell{(d *0 d) *1  (2 *0 phi1) *1 (phi *0 2) *1 (d *0 d)}$$

 Monochrome string diagrams can be interpreted as morphisms in a $\mathbf{PRO}$, that is a strict monoidal category whose objects are natural numbers and whose product  on objects is addition. 

\begin{defi}[Signature]
A \emph{signature} $\sig$ is a finite set of \emph{atomic diagrams} (or \emph{gates type}). Given a signature, a diagram $\phi : p \frr q\in \sig$ is a morphism in the  $\mathbf{PRO}$  freely generated by $\sig$, i.e. by the two compositions and identities. A \emph{gate} is an occurrence of an atomic diagram. 

\end{defi}



%


\subsection{Diagram rewriting}
\begin{defi}[Diagram Rewriting System]
A \emph{diagram rewriting system} is  a couple $(\sig, \R)$ given by a signature $\sig$ and a set $\R$ of rewriting rules of the form $\rew \phi {\phi'}$ where $\phi, \phi': p \frr q $ are diagrams in $\sig^*$. 

Moreover, we allow each rewriting  rules under any context, that is, if  $\xymatrix@C=1em{ \phi \ar@3[r] & \phi'}$ in $\R$ then, for every $\chi_u , \chi_d\in \sig^*$. We respectively represent a rewriting rule and its application under a context as follows:
$$
\overbrace{\underbrace{
\twocell{d *1 phi *1 d}}_q}^p
\xymatrix{\ar@3[r]&}
\overbrace{\underbrace{
\twocell{d *1 phi1 *1 d}}_q}^p
\qquad \qquad
\twocell{d *1 up *1 (2 *0 d *0 2) *1 (d *0 phi *0 d ) *1 ( 2 *0 d *0 2) *1 do *1 d}
\xymatrix{\ar@3[r]&}
\twocell{d *1 up *1 (2 *0 d *0 2) *1 (d *0 phi1 *0 d ) *1 ( 2 *0 d *0 2) *1 do *1 d} \; .$$

 We say that $\psi$ \emph{reduces}, or \emph{rewrites}, to $\psi'$  (denoted $\xymatrix@C=1em{\psi \ar@3[r]^{*} & \psi'}$) if there is a \emph{rewriting path} $P: \xymatrix@C=1em{\psi=\psi_0 \ar@3[r] & \psi_1 \ar@3[r] & \dots  \ar@3[r] & \psi_n=\psi'}$.
\end{defi}

We here recall some classical notions in rewriting:

\begin{itemize}
\item A diagram $\phi$ is \emph{irreducible} if there is no $\phi'$ such that $\xymatrix@C=1em{\phi \ar@3[r] & \phi'}$;

\item A rewriting system \emph{terminates} if there is  no infinite rewriting sequence;

\item A rewriting system is \emph{confluent} if for all $\phi_1,\phi_2$ and $\D$ such that $\xymatrix@C=1em{\phi \ar@3[r] & \phi_1}$ and $\xymatrix@C=1em{\phi \ar@3[r] & \phi_2 }$, there exists $\phi' $ such that $\xymatrix@C=1em{\phi_1 \ar@3[r]^{*} & \phi'} $  and $\xymatrix@C=1em{\phi_2 \ar@3[r]^{*} & \phi'}$;

\item A rewriting system is \emph{convergent} if it terminates and is confluent.

\end{itemize}

\begin{prop}[Newman lemma \cite{new}]\label{termconf}
A terminating rewriting system is confluent if and only if every critical peak is solvable.
\begin{proof} 
The left-to-right implication is trivial by definition of confluent rewriting system and critical peak.

In order to prove the converse we have to prove that any conflict is solvable. 
{Lets consider a conflict $(P_1,P_2)$ and two maximal rewriting paths $P_1'P_1: \rew{\phi}{\hat \psi_1}$ and $P_2'P_2: \rew{\phi}{\hat \psi_2}$.
If $(Q_1P_1,Q_2P_2)$ is a solution of the conflict $(P_1,P_2)$ with $Q_1P_1,Q_2P_2: \rew \phi {\phi'}$  and we have $\hat \psi_1=\phi'$ or $\hat \psi_2=\phi'$ the proposition is proved. 
Otherwise, we consider the maximal rewriting paths $\hat QQ_1:\rew {\psi_1}{\hat \psi'}$ and $\hat QQ_2: \rew{\psi_2}{\hat \phi'}$. 

\centerline{
$$\xymatrix{
& &\psi_1 \ar@3[dr]_{Q_{1}}^{*} \ar@3@/^/[rrr]^{~~~~~~~~~~~~P_1'~~~~~~~~*} \ar@3@/^/[drrr]^{*}_{\hat Q Q_1}&&&\hat \psi_1  \\
\phi \ar@3[drr]^{P_2}\ar@3[urr]_{P_1}& && \phi' \ar@3[rr]^{~~\hat Q~~~*} && {\hat \phi'} \ar@3@{=}[u] \ar@3@{=}[d]\\
& &\psi_2 \ar@3[ur]^{Q_2}_{*} \ar@3@/_/[rrr]_{~~~~~~~~~~~~P_2'~~~~~~~~*} \ar@3@/_/[urrr]^{*}_{\hat Q Q_2}& && \hat \psi_2 
}$$}
\noindent By induction over the length of $P_1'P_1$ and $P_2'P_2$, we are able to prove that $\hat \psi_1=\hat \phi'=\hat \psi_2$.
}
\end{proof}
\end{prop}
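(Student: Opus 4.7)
The forward direction is immediate: every critical peak is by definition a particular local divergence $\xymatrix@C=1em{\phi_1 & \phi \ar@3[l] \ar@3[r] & \phi_2}$, so confluence of the system directly supplies a common reduct. The content is in the converse: under termination, solvability of critical peaks implies full confluence.

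My strategy is the classical two-step argument. First, I would establish local confluence. Given any one-step local divergence $\phi \frr \phi_1$ and $\phi \frr \phi_2$ obtained by firing rules at two gates $g_1$ and $g_2$ inside $\phi$, I case-split on the relative position of $g_1$ and $g_2$. If the supports of $g_1$ and $g_2$ are disjoint for the compositions $\circ$ and $*$, the two rewrites commute instantly (essentially the interchange rule together with functoriality of each composition), closing the divergence in one step on each side. If the supports overlap, then after stripping away the surrounding context $\chi_u, \chi_d$ the configuration is, by definition, an instance of a critical peak, and the assumed solvability furnishes a common reduct, which one transports back under $\chi_u, \chi_d$.

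Second, I would lift local confluence to confluence by Noetherian induction along the termination order. For $\phi$ with two rewriting paths $\phi \frr^* \psi_1$ and $\phi \frr^* \psi_2$, the case where one path is empty is immediate. Otherwise I factor each path as $\phi \frr \chi_i \frr^* \psi_i$ and apply local confluence to $(\chi_1,\chi_2)$ to obtain a common reduct $\xi$. Since $\chi_1$ is strictly below $\phi$ in the termination order, the induction hypothesis applied to $\chi_1$ with its two reducts $\psi_1$ and $\xi$ yields a closing diagram $\hat\psi_1$; symmetrically one gets $\hat\psi_2$ from $\chi_2$; and one final invocation of the induction hypothesis at $\xi$ (or at whichever common predecessor arises) identifies $\hat\psi_1$ and $\hat\psi_2$ and thus closes the original divergence.

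The main obstacle will be the local confluence step. Giving a precise notion of gate position, of overlap, and of the minimal critical configuration is delicate for monochrome string diagrams, because parallel composition $*$, sequential composition $\circ$, and the interchange rule allow a single rewrite pattern to sit inside a diagram in several a priori non-equivalent ways. Classifying the non-disjoint local divergences so that each one is genuinely an instance of a critical peak modulo some context $(\chi_u,\chi_d)$ is where the real bookkeeping of the proof resides; the induction itself is then a routine application of the well-founded order furnished by termination.
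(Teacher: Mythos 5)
Your proposal is correct, and it is the classical Knuth--Bendix/Huet route, which differs genuinely from what the paper does. You separate the argument into two lemmas: a critical-pair lemma (disjoint redexes commute by the interchange rule; overlapping ones reduce, under a context $(\chi_u,\chi_d)$, to a critical peak) yielding local confluence, followed by Noetherian induction on the termination order to lift local confluence to confluence. The paper instead never isolates local confluence: it takes an arbitrary conflict $(P_1,P_2)$, extends both legs to \emph{maximal} rewriting paths ending in normal forms $\hat\psi_1,\hat\psi_2$, and argues by induction on the lengths of these maximal paths that $\hat\psi_1=\hat\phi'=\hat\psi_2$ --- in effect proving confluence through uniqueness of normal forms. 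Your version buys robustness: induction on the length of rewriting paths is exactly the step that is known to fail for local confluence without termination, and your well-founded induction on the reduction order (closing the diagram with one application of the induction hypothesis at each of $\chi_1$, $\chi_2$, and the common reduct $\xi$) makes the decreasing measure explicit where the paper's sketch leaves it implicit. The paper's formulation buys something too: its explicit decomposition of an arbitrary pair of parallel paths into critical-peak solutions is precisely the construction reused in Lemma~\ref{pav}, where each confluence diagram is filled by the corresponding $4$-cell, so the proof structure is engineered for the homotopical refinement rather than for minimality. Finally, the obstacle you flag in the local-confluence step is real and is resolved in this paper not by finer bookkeeping of positions but by enlarging the notion of critical peak: minimal overlaps in monochrome string diagrams form infinite \emph{indexed} families (the global conflicts of Proposition~\ref{simglob}), and Lemma~\ref{glob} reduces their solvability to that of the finitely many reduced representatives --- so for your step ``every overlap is an instance of a critical peak modulo context'' to be sound here, ``critical peak'' must be read as including these indexed families.
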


Without providing full details about \emph{polygraphical} construction of higher dimensional categories by means of generators and relations \cite{Bur}, we refer to  string diagrams as $2$-cells and to rewriting paths as $3$-cells. Moreover, whenever we have two \emph{parallel} $3$-cells (i.e. two rewriting paths with same source and same target), we are able to define a $4$-cell of which this pair is border. 

Even if the other cells are implicitly ($2$-cells up-down) or explicitly ($3$-cells) oriented, we do not give any orientation for $4$-cells as well as, in equalities, is not usual to give one. For this reason, this initial setting we consider is called $(4,3)$-polygraph \cite{maxime}. In general, we refer to a \emph{$(n,m)$-polygraph} as a collection of sets of $k$-cells for any $k\leq m$ such that there is no orientation for the $k$-cells with $k>m$.

\begin{lemma}[Refinement of Newman lemma]\label{pav}\hfill \\
Given a convergent diagram rewriting system $(\sig, \R)$ and a $4$-cell for each critical peak solution, then there is a composed $4$-cell for any pair $(P,Q)$ of non-equal parallel $3$-cells.
\end{lemma}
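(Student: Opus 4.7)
The plan is to mimic the diagrammatic tiling argument used in the proof of Proposition \ref{termconf}, but one dimension higher: where Newman's lemma closes a peak of two single-step rewritings by a pair of rewriting paths, we now close a ``bigon'' formed by two parallel rewriting paths by a composed $4$-cell.

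The first step is to construct, for every local peak of the form $\rew{\phi}{\psi_1}$ and $\rew{\phi}{\psi_2}$ (together with a chosen confluence $\psi_1 \Rrightarrow^{*} \phi' \Lleftarrow^{*} \psi_2$), a canonical $4$-cell filling the resulting square. The two redexes inside $\phi$ are either (i) disjoint, (ii) nested in disjoint surrounding contexts, or (iii) overlapping. In cases (i) and (ii) the interchange rule for the parallel and sequential compositions, together with the associativity and unit axioms of the $\mathbf{PRO}$, forces the two steps to commute on the nose: the diamond is already a trivial equality and admits a degenerate $4$-cell. In case (iii) the peak is, up to an outer context $(\chi_u,\chi_d)$, an instance of a critical peak, and the hypothesized $4$-cell transported under this context provides the filler.

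The second step is a tiling induction on the pair of parallel $3$-cells $(P,Q)$, both of shape $\psi \Rrightarrow^{*} \psi'$. Because $(\sig,\R)$ is convergent, $\psi'$ has a unique normal form $\hat\psi'$; fixing a path $R: \psi' \Rrightarrow^{*} \hat\psi'$ we reduce to parallel paths $RP$ and $RQ$ into a normal form. Proceeding by induction on the sum of the lengths of $P$ and $Q$, we peel off the first divergent step of each path, fill the resulting peak by the $4$-cell from the first step, and close the remaining smaller parallel region by the induction hypothesis. The horizontal and vertical composition of all these local $4$-cells inside the $(4,3)$-polygraph yields the desired global $4$-cell between $P$ and $Q$.

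The main obstacle is the bookkeeping for case (iii): after the critical-peak $4$-cell has been substituted under the context $(\chi_u,\chi_d)$, one has to check that the residual peaks appearing along the boundary of the tile are again of one of the three types, so that the inductive step actually applies and the recursion bottoms out. This is precisely the content of the residual argument implicit in the proof of Proposition \ref{termconf}, lifted from $3$-cells to $4$-cells; once it is in place, the assembly of the local $4$-cells into a single composed $4$-cell is routine.
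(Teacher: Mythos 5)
Your overall architecture is the paper's: classify the local peaks (independent redexes commute strictly by the interchange law and need only degenerate fillers; overlapping redexes are, up to a context $(\chi_u,\chi_d)$, instances of critical peaks, filled by the hypothesized $4$-cells transported under that context), and then tile the bigon bounded by $P$ and $Q$ using the Newman decomposition from Proposition \ref{termconf}. That is exactly how the paper argues, so the plan is sound. The genuine gap is your induction measure. Induction on the sum of the lengths of $P$ and $Q$ does not go through: when you peel off divergent first steps $\phi \frr \psi_1$ and $\phi \frr \psi_2$ and fill the local peak, the confluence legs $\psi_1 \frr^{*} \phi'$ and $\psi_2 \frr^{*} \phi'$ that close it can be arbitrarily long, so the two residual parallel regions (the tail of $P$ against the leg through $\phi'$ continued to the normal form, and symmetrically for $Q$) need not be smaller in your measure. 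This is precisely the classical reason Newman's lemma itself cannot be proved by induction on path length and requires a well-founded induction; the obstruction persists unchanged one dimension up, so the recursion you describe is not guaranteed to bottom out.

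The repair is standard and is what the paper implicitly relies on: perform Noetherian induction on the source $2$-cell with respect to the rewriting order, which is well-founded by termination (Proposition \ref{termsim}), or equivalently, as in the proof of Proposition \ref{termconf}, induct on the length of \emph{maximal} rewriting paths issuing from the source. Then the two residual regions have sources $\psi_1$ and $\psi_2$, proper reducts of $\phi$, and the inductive hypothesis applies to each; composing the local fillers yields the global $4$-cell, exactly as in the paper's proof, which proceeds ``by induction over the number of critical peak solutions occurring in a confluence diagram decomposition'' obtained from that construction. One further small point: in this diagrammatic setting the hypothesis ``a $4$-cell for each critical peak solution'' must be read as covering the indexed families of critical peaks of Proposition \ref{simglob}, so your case (iii) should say ``an instance of a (possibly indexed) critical peak under a context''; the reduction of these families to finitely many reduced representatives is the content of Lemma \ref{glob}, and your boundary bookkeeping concern is then absorbed by the Noetherian induction rather than needing a separate residual argument.
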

\begin{proof} 
It suffice to remark that in the construction of the proof of Proposition \ref{termconf}, whenever we have the the solution a critical perk appearing in the decomposition of two parallel $3$-cells, we can fill its relative confluence diagram by the corresponding $4$-cells. 
Then, the proof is given by induction over the number of critical peak solutions occurring in a confluence diagram decomposition.
\end{proof}

\section{Coherence for monoidal categories via rewriting}\label{cohemonodia}

In this section we focus on the convergence proof of two diagram rewriting systems $\M$ and $\Sim$ that presents functors and natural isomorphisms of respectively monoidal category and symmetric monoidal categories and we prove their confluence.
We give the full proof  the proof of the confluence of $\Sim$, whose main passages were already proved in \cite{LafBool}. In particular we analyze the process to determine the 68 critical peaks of this rewriting system and we exhibit their confluence diagrams.

\begin{defi}\label{SimDef}
Let $\Sim$ be the rewriting system defined on the signature given by the following gates
$$
\twocell{m} \qquad \qquad
\twocell{e} \qquad \qquad
\twocell{s}
$$
 and with the following rules:
\vspace{-.3cm}
\begin{equation}\label{monr}{
\xymatrix{ \twocell{( m *0 1) *1 m} \ar@3@3[r]^{\twocell{alpha}} & \twocell{(1 *0 m)*1 m}} \qquad \qquad
\xymatrix{ \twocell{(e *0 1)*1 m} \ar@3[r]^{\twocell{ l }} & \twocell{1}} \qquad \qquad
\xymatrix{ \twocell{(1 *0 e )*1 m} \ar@3[r]^{\twocell{ r }} & \twocell{1}}
}\end{equation}
\vspace{-.6cm}
\begin{equation}\label{newr}{
\xymatrix{ \twocell{ s *1 m} \ar@3[r]^{\twocell{ tau }} & \twocell{m}} \qquad \qquad
\xymatrix{ \twocell{(s *0 1) *1 ( 1 *0 m) *1 m} \ar@3@3[rr]^{\twocell{gamma}} && \twocell{(1 *0 m)*1 m}} \qquad \qquad
}\end{equation}
\vspace{-.6cm}
\begin{equation}\label{manr}
\begin{gathered}
\xymatrix{ \twocell{s *1 s } \ar@3[r] & \twocell{2}} \qquad \qquad
\xymatrix{ {\twocell{(s *0 1)*1 (1 *0 s)*1 (s *0 1)}} \ar@3[r] & {\twocell{ (1 *0 s)*1 (s *0 1)*1 (1 *0 s) }}}\\
\xymatrix{ \twocell{(e *0 1) *1 s } \ar@3[r] & \twocell{(1 *0 e) *1 2}} \qquad
\xymatrix{ \twocell{(1 *0 e) *1 s} \ar@3[r] & \twocell{(e *0 1) *1 2}} \\
\xymatrix{ {\twocell{(m *0 1)*1 s}} \ar@3[r] & {\twocell{ (1 *0 s)*1 (s *0 1)*1 (1 *0 m) }}} \quad
\xymatrix{ {\twocell{(s *0 1)*1 (1*0 s) *1 (m*0 1)}} \ar@3[r] & {\twocell{ (1 *0 m)*1 s }}} \qquad 
\xymatrix{ \twocell{(s *0 1) *1 (1 *0 m) *1 s } \ar@3[r] & \twocell{(1 *0 s ) *1 (m *0 1)} }
\end{gathered}
\end{equation}
By restriction, we define the rewriting system $\M$ as the one given by the signature $\{ \twocell{m}, \twocell{e} \}$ and the set of rules (\ref{monr}).
\end{defi}

A string diagram in $\Sim$ should be read labeling with free variables the input strings (keeping the intuition that $\twocell s$ is a string crossing), by $\e$ the output string of a $\twocell e$-gate and by $x\pro y$ the output string of a $\twocell m$-gate with input string $x,y$. By means of example, the interpretation of the diagrams $ \twocell{(s *0 e)*1(1 *0 m)*1 m}$ and $\twocell{(1*0 m) *1 s}$ respectively are $y \pro(x \pro e)$, and the pair $(y\pro z, x)$.

\begin{oss}
Structural rules do not change digram interpretation: if $\xymatrix@C=.8em{ \phi \ar@3^{*}[r] & \phi'}$ is a rewriting path made of structural rules, then the interpretation of $\phi$ and $\phi'$ is the same.
\end{oss}

\begin{oss}The set of rule can be derived by the following set of equivalencies:
$$\begin{gathered}
\twocell{s *1 s } = \twocell{2} \qquad \qquad
\twocell{(s *0 1)*1 (1 *0 s)*1 (s *0 1)} =\twocell{ (1 *0 s)*1 (s *0 1)*1 (1 *0 s) }\qquad \qquad
\twocell{(e *0 1) *1 s } = \twocell{(1 *0 e) *1 2} \qquad \qquad
\twocell{(m *0 1)*1 s} = \twocell{ (1 *0 s)*1 (s *0 1)*1 (1 *0 m) }\\
\twocell{( m *0 1) *1 m} = \twocell{(1 *0 m)*1 m} \qquad \qquad
\twocell{(e *0 1)*1 m} = \twocell{1} \qquad \qquad
\twocell{ s *1 m} = \twocell{m}.
\end{gathered}$$
We have divided this set of rules into three subsets: the subset (\ref{monr}) consisting of \emph{monoidal rules} that are the same rules as $\M$ which correspond to  associativity and left and right unitor,  the subset (\ref{newr}) consisting of rules \twocell{tau}, corresponding to the natural transformation $\tau$, and \twocell{gamma} which is the natural transformation of parallel associativity $\gamma$ and the subset of \emph{structural rules} (\ref{manr}) required to to manage resources and their interactions.
\end{oss}

\begin{oss}
Lafont's \cite{LafBool} and Guiraud-Malbos' \cite{GuirMalCohe} presentations of  polygraphs representing symmetric monoidal categories differ in the orientations of the rules \ref{manr}. In particular, the two systems differ for the orientation of 
\vspace{-.3cm}
$$\mbox{Lafont: }\xymatrix{ {\twocell{(s *0 1)*1 (1*0 s) *1 (m*0 1)}} \ar@3[r] & {\twocell{ (1 *0 m)*1 s }}}\qquad \mbox{Guiraud-Malbos: } \xymatrix{{\twocell{ (1 *0 m)*1 s }} \ar@3[r] & {\twocell{(s *0 1)*1 (1*0 s) *1 (m*0 1)}}}$$
\vspace{-.3cm}

and the presence in Lafont's system of the extra structural rule \resizebox{.8\width}{!}{$\xymatrix{ \twocell{(s *0 1) *1 (1 *0 m) *1 s } \ar@3[r] & \twocell{(1 *0 s ) *1 (m *0 1)} }$} required to recover confluence.

 Guiraud-Malbos rewriting system is not confluent; more precisely, the critical peak of $\twocell{(1*0 s)*1 (1 *0 m )*1 s}$ is not solvable: 
\resizebox{.8\width}{!}{$$\xymatrix
{ \twocell{(1*0 s)*1 (s *0 1 )*1 (1*0 s)*1 (m*0 1)} & \twocell{(1*0 s)*1 (1 *0 m )*1 s} \ar@3[r] \ar@3[l] & \twocell{(1 *0 m )*1 s}\ar@3[r] &\twocell{(s*0 1)*1 (1 *0 s)*1 (m*0 1)}} 
$$
}

On the other hand, confluence is guaranteed for rewriting paths made of  string diagrams with only one output, which are the string diagrams representing functors in symmetric monoidal categories.
 Moreover their method is not constructive and they are able to prove the coherence theorem using the existence of a Tietze-equivalent \cite{tietz} confluent presentation, i.e. the one given by Lafont.

\end{oss}

Here, for completeness, we recall the proof of the confluence of  $\Sim$.

\begin{prop}[\cite{LafBool}]\label{termsim}
The rewriting system $\Sim$ is terminating.
\end{prop}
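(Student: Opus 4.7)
The plan is to prove termination of $\Sim$ by exhibiting a \emph{polynomial interpretation}: a map sending each diagram $\phi : p \frr q$ to a polynomial function $[\phi] : \N^p \to \N^q$, strictly monotone in every input, such that every rewrite rule strictly decreases $[\cdot]$ componentwise. First I would fix interpretations of the three generators: a polynomial $[m] : \N^2 \to \N$, a positive constant $[e] \in \N$, and a pair of polynomials $[s] : \N^2 \to \N^2$. The interpretation extends to every diagram by structural induction on the two compositions, and the interchange law is automatically preserved because sequential composition and Cartesian product of strictly monotone polynomial maps are again strictly monotone; hence $[\cdot]$ descends well-definedly to the free $\mathbf{PRO}$ generated by the signature.

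The verification then proceeds rule by rule. For the monoidal rules (\ref{monr}), the classical choice $[m](x,y) = 2x + y + 1$ and $[e] = 1$ already gives strict pointwise decreases for associativity and the two unitors, which alone handles termination of $\M$. For the remaining rules in (\ref{newr}) and (\ref{manr}), the interpretation of $s$ must simultaneously satisfy several constraints: the involutivity rule forces each component of $[s](x,y)$ to strictly exceed $x$ and $y$ respectively; the contraction rule reducing an $s$ composed above an $m$ to just $m$ forces $[m]\!\circ\![s] > [m]$ pointwise; and the unit--swap rules force a compatibility between $[s]$ and $[e]$. A candidate such as $[s](x,y) = (2y + 1, 2x + 1)$ addresses most of these constraints. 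Strict monotonicity in every input ensures that strict decrease survives whiskering by any context $\chi_u, \chi_d$, so termination under arbitrary contexts reduces to termination on bare rule instances. Because $\N^q$ is well-founded under the pointwise order, any infinite reduction would yield, after evaluation on any fixed input, an infinite strictly descending chain in $\N^q$, contradicting well-foundedness.

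The hard part will be the rule in (\ref{manr}) that pushes an $s$-gate past an $m$-gate and splits it into two $s$-gates: the right-hand side applies $[s]$ twice to the inputs while the left-hand side applies it only once, so the interpretation of $s$ must be tuned delicately so that the inflation coming from double application on the right is dominated by a compensating interplay with $[m]$. A related subtle case is the Yang--Baxter rule in (\ref{manr}), where both sides have exactly three $s$-gates with identical gate counts, forcing a genuinely asymmetric choice of $[s]$ to produce any strict decrease. If a direct polynomial interpretation resists the computation, a fallback is a lexicographic combination of simpler measures---first a count of $(e,m)$ redex positions, then a count of $s$-gates weighted by the number of $m$-gates lying below them in the diagram, and then a residual structural complexity---each component designed to strictly decrease under a specific subset of rules, reducing termination to finitely many combinatorial inequalities.
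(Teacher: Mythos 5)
Your overall strategy---interpreting diagrams as monotone maps $\N^p\to\N^q$ compatible with both compositions, checking a strict decrease rule by rule, and concluding by well-foundedness of the pointwise order---is exactly the method of the paper's proof. But the entire mathematical content of that method lies in exhibiting an interpretation that actually validates all twelve rules, and this is what your proposal lacks: your one concrete candidate provably fails. With $[s](x,y)=(2y+1,2x+1)$ and $[m](x,y)=2x+y+1$, the Yang--Baxter rule gives \emph{equal} values on both sides (each side evaluates to $(4z+3,\,4y+3,\,4x+3)$; any such ``symmetric'' choice of $[s]$ is killed by this rule, as you half-suspect), and worse, the rule that pushes an $s$-gate past an $m$-gate, $(m*\id)\,;\,s \;\Rrightarrow\; (\id*s)\,;\,(s*\id)\,;\,(\id*m)$, strictly \emph{increases} your measure: the left side evaluates to $(2z+1,\,4x+2y+3)$ while the right side gives $(4z+3,\,4x+2y+4)$. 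The paper resolves precisely the difficulty you flag with an asymmetric affine interpretation over the \emph{positive} integers: $[e]=1$, $[m](x,y)=2x+y$, and $[s](x,y)=(x+y,\,x)$ (this is the form under which all the displayed computations in the paper's proof check out: one output records the sum, the other retains only the first input). Under this choice Yang--Baxter decreases strictly in its first component ($2x+y+z$ versus $x+y+z$, the other two components coinciding), and the remaining rules are each verified by a one-line computation. Note that this also shows your requirement of a decrease in \emph{every} coordinate is too strong: several rules, Yang--Baxter included, decrease only in the product order with some components unchanged, so the well-foundedness argument must run with ``$\leq$ everywhere and $<$ somewhere,'' together with the observation that strictness survives whiskering because every wire feeds strictly into at least one output of each gate interpretation.

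Your fallback lexicographic measure does not close the gap either. On Yang--Baxter both sides contain no $e$- or $m$-gates and exactly three $s$-gates, so your first two components (count of $(e,m)$-redexes; $s$-gates weighted by $m$-gates below) are invariant there, and everything is delegated to an undefined ``residual structural complexity.'' Moreover, the weighting of $s$-gates by adjacent $m$-gates is increased by one of the two $s$-versus-$m$ commutation rules whichever vertical orientation you choose: with ``$m$ below,'' the duplication rule above raises the weight from $0$ to $2$; with ``$m$ above,'' the converse rule $(s*\id)\,;\,(\id*s)\,;\,(m*\id)\Rrightarrow(\id*m)\,;\,s$ raises it from $0$ to $1$. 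As it stands, then, your argument establishes termination of $\M$ (the monoidal fragment, where $[m](x,y)=2x+y+1$, $[e]=1$ suffices) but not of $\Sim$: the decisive step, namely Lafont's interpretation $[s](x,y)=(x+y,x)$, $[m](x,y)=2x+y$, $[e]=1$ and the verification of the twelve inequalities, is missing.
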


\begin{proof} 
In order to prove termination we interpret every diagram $\phi: n\frr m \in  \Sim^*$ with a monotone function $[\phi]:\N^n\fr \N^m$. These have well-founded partial order induced by product order on $\N^p$, i.e. $\bar x =(x_1, \dots,x_p)\leq(y_1, \dots y_p)=\bar y$ whenever $x_1\leq x_1 \land \dots \land x_p \leq y_p$:
$$f,g: \N^{*p}\fr \N^{*q} \mbox{ then } f< g \mbox { iff } f(\bar x)< g(\bar x) \mbox{ for all } \bar x \in \N^{*p}.$$

Each gate of the signature can be interpreted as follows:
$$[\;\twocell{s}\;] (x,y)\fr (y, x+y) \qquad [\; \twocell{m}\; ] (x,y)\fr 2x+y \qquad [\twocell{e}](\emptyset )\fr 1. $$

This allow us to associate to any $3$-cell $\xymatrix@C-1pc{ \phi \ar@3[r] & \psi}$ two monotone maps  $[\phi]$ and $[\psi]$ such that $[\phi]> [\psi]$:
\vspace{-.1cm}
$$
\Big[\; \twocell{( m *0 1) *1 m}\; \Big](x,y,z)= 4x+2y+z  > 2x+2y+z =\Big[\;\twocell{(1 *0 m)*1 m}\; \Big](x,y,z) ,$$
$$
\Big[\;\twocell{(e *0 1)*1 m}\;\Big](x)=x+1 > x= \Big[\;\twocell{1}\;\Big](x), \qquad \qquad
\Big[\;\twocell{(1 *0 e )*1 m}\;\Big] (x)= x+1 > x = \Big[\;\twocell{1}\;\Big](x),$$
$$
\Big[\; \twocell{s *1 s }\;\Big] (x,y)= (2x+y,x+y)> (x,y)=\Big[\;\twocell{2}\;\Big](x,y), $$
$$
\Bigg[\;{\twocell{(s *0 1)*1 (1 *0 s)*1 (s *0 1)}}\;\Bigg](x,y,z)=(2x+y+z, x+y,x)> (x+y+z,x+y,x)=\Bigg[\;{\twocell{ (1 *0 s)*1 (s *0 1)*1 (1 *0 s) }}\;\Bigg],$$
$$
\Big[\; \twocell{(e *0 1) *1 s }\;\Big](x)= (x+1,1) > (x,1)=\Big[\;\twocell{(1 *0 e) *1 2}\;\Big] (x),$$
$$
\Big[\;\twocell{(1 *0 e) *1 s}\;\Big](x)= (x+1,x)> (1,x)=\Big[\; \twocell{(e *0 1) *1 2}\;\Big](x),$$
$$
\Bigg[\; \twocell{(m *0 1)*1 s}\;\Bigg](x,y,z)= (2x+y+x, 2x+y) > (x+y+z,2x+y)=\Bigg[\; \twocell{ (1 *0 s)*1 (s *0 1)*1 (1 *0 m) }\;\Bigg](x,y,z),$$
$$
\Bigg[\; \twocell{(s *0 1)*1 (1*0 s) *1 (m*0 1)} \;\Bigg](x,y,z)= (3x+2y+z,x) > (x+2y+z,x) = \Bigg[\;\twocell{ (1 *0 m)*1 s }\;\Bigg](x,y,z),$$
$$
\Bigg[\; \twocell{(s *0 1) *1 (1 *0 m) *1 s }\;\Bigg](x,y,z)= (3x+y+z,x+y) > (2x+y+z,y)=\Bigg[\; \twocell{(1 *0 s ) *1 (m *0 1)}\;\Bigg](x,y,z),$$
$$
\Bigg[\;\twocell{ s *1 m}\;\Bigg](x,y)= 3x+2y > 2x+y=\Bigg[\;\twocell{m}\;\Bigg](x,y),$$
$$
\Bigg[\; \twocell{(s *0 1) *1 ( 1 *0 m) *1 m}\;\Bigg](x,y,z)= 4x+2y+z > 2x+2y+z=\Bigg[\; \twocell{(1 *0 m)*1 m}\;\Bigg](x,y,z).
$$
By the compatibility of the order with sequential and parallel composition, this suffice to prove that, for any couple of  diagrams, $[\phi] > [\psi]$ holds if $\rew \phi \psi$. Since there exists no infinite decreasing suite of monotone maps on positive integers, infinite reduction paths can not exist.
\end{proof}

\begin{cor}\label{monter}
The rewriting system $\M$ is terminating.
\end{cor}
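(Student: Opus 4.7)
The plan is to observe that $\M$ is literally a sub-rewriting-system of $\Sim$: its signature $\{\twocell{m}, \twocell{e}\}$ is contained in the signature of $\Sim$, and its rule set (\ref{monr}) is contained in the rule set of $\Sim$. Consequently, every diagram of $\M^*$ is a diagram of $\Sim^*$, and every one-step reduction in $\M$ is also a one-step reduction in $\Sim$. Any hypothetical infinite reduction sequence in $\M$ would therefore give rise to an infinite reduction sequence in $\Sim$, contradicting Proposition \ref{termsim}.

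Concretely, I would reuse the monotone interpretation constructed in the proof of Proposition \ref{termsim}, restricted to the two gates that actually appear in $\M$, namely $[\twocell{m}](x,y)=2x+y$ and $[\twocell{e}](\emptyset)=1$. The three inequalities relevant to $\M$ are exactly the ones already verified in the preceding proof (for associativity, left unit, and right unit). Since the order on monotone maps $\N^p\to\N^q$ is well-founded and compatible with both parallel and sequential composition, any reduction in $\M$ strictly decreases the interpretation, so no infinite reduction path exists.

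There is essentially no obstacle here: the corollary is an immediate specialization. The only thing to be careful about is to state the inclusion of rewriting systems cleanly (signature inclusion plus rule inclusion implies reduction inclusion) so that the appeal to Proposition \ref{termsim} is clearly justified; this is a one-line argument and the proof fits in two or three sentences.
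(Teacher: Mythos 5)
Your proposal is correct and matches the paper exactly: the corollary is stated immediately after Proposition \ref{termsim} with no separate argument, precisely because $\M$ is the restriction of $\Sim$ to the signature $\{\twocell{m},\twocell{e}\}$ and the rules (\ref{monr}), so termination is inherited (equivalently, the interpretation $[\twocell{m}](x,y)=2x+y$, $[\twocell{e}](\emptyset)=1$ already verifies the three monoidal rules). Your two formulations --- embedding of reduction sequences versus restricting the monotone interpretation --- are both sound and amount to the paper's intended one-line specialization.
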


In order to prove $\Sim$ (and $\M$) confluence we need to prove some additional properties:

\begin{defi}[Global conflict]
In a rewriting system $(\sig, \R)$, a \emph{global conflict} (or \emph{indexed critical pair} \cite{GuirMalHD}) is a family of $2$-cells source of the same critical peak $(P_1,P_2)$ containing a proper subdiagram $\phi$ in which no gate is rewritten by  $P_1$ nor $P_2$. A $2$-cell in a global conflict is a \emph{reduced global conflict} if this subdiagram $\phi$ is irreducible.
\end{defi}

By this definition, considering the rewriting rules in $\Sim$ we have the following
\begin{prop}[$\Sim$'s global conflicts]\label{simglob}
In $\Sim$, there are six global conflicts 
\vspace{-.2cm}
$$\begin{Bmatrix} \twocell{(s *0 3) *1(1 *0 s *0 d) *1 (s *0 phi3) *1 (1 *0 s *0 d) *1 (s *0 3)} \end{Bmatrix}_{\phi \in I}, \; 
\begin{Bmatrix} \twocell{(s *0 3) *1(1 *0 s *0 d) *1 (s *0 phi3) *1 (1 *0 m *0 d) *1 (s *0 2)} \end{Bmatrix}_{\phi \in I},  \; 
\begin{Bmatrix} \twocell{(s *0 3) *1(1 *0 s *0 d) *1 (m *0 phi3) *1 (s *0 d)} \end{Bmatrix}_{\phi \in I},$$
$$\begin{Bmatrix} \twocell{(s *0 3) *1(1 *0 s *0 d) *1 (s *0 phi3) *1 (1 *0 s *0 d) *1 (m *0 3)} \end{Bmatrix}_{\phi \in I}, \; 
\begin{Bmatrix} \twocell{(s *0 3) *1(1 *0 s *0 d) *1 (s *0 phi3) *1 (1 *0 m *0 d) *1 (m *0 2)} \end{Bmatrix}_{\phi \in I}, \; 
\begin{Bmatrix} \twocell{(s *0 3) *1(1 *0 s *0 d) *1 (m *0 phi3) *1 (m *0 d) } \end{Bmatrix}_{\phi \in I}$$\vspace{-.2cm}
where $I=\begin{Bmatrix}\phi \in \Sim^* | \: \phi= \twocell{ (1 *0 d) *1 phi3 *1 (1 *0 d)}: 1+n \frr 1+m \end{Bmatrix}$.
\end{prop}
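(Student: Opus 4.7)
The plan is to enumerate systematically all pairs $(P_1, P_2)$ of rule applications in a source 2-cell whose left-hand sides overlap through a proper subdiagram $\phi$ not touched by either rewriting, yet are connected so that the peak cannot be resolved by disjoint rewrites. Ordinary critical peaks arise when the two left-hand sides share at least one gate, while a global conflict occurs when they share only wires, with those wires routed through the spacer $\phi$.

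First I would identify the rules of $\Sim$ that can participate in a global conflict. A rule's left-hand side can host wires of a spacer only if it contains $s$-gates through which arbitrary wires may thread without interfering with the rewritten gates. This criterion excludes the rules whose left-hand sides are rigid on all their inputs, namely the left and right unitors, the associator $\alpha$, the involution $s\circ s\Rrightarrow \id$, the rule $\tau$, and the two unit--swap rules. The surviving rules are the Yang--Baxter braid relation, the two mixed swap--multiplication rules with sources $(m\otimes 1)\circ s$ and $(s\otimes 1)(1\otimes s)\circ(m\otimes 1)$, the extra structural rule with source $(s\otimes 1)\circ(1\otimes m)\circ s$, and the parallel associator $\gamma$.

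Observing that every family displayed in the statement shares the same upper pattern --- the top half of the Yang--Baxter left-hand side, formed by a crossing on the first two strands followed by a crossing on the second and third, extended over further strands --- I would match this pattern against each of the surviving rules positioned beneath, linked through a spacer. The constraint that $\phi$ factors as $(\id\otimes d)\circ\phi_3\circ(\id\otimes d)$ forces the leftmost wire of $\phi$ to be exactly the wire shared between the upper swap chain and the lower rule, leaving the remaining strands to carry an arbitrary subdiagram. Enumerating the possible lower rules that close a critical peak then produces exactly the six listed families: two with Yang--Baxter completions (families~1 and~4), two with mixed swap--multiplication completions (families~2 and~5), and two where the lower rule already consumes the leftmost strand via a multiplication (families~3 and~6).

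The main obstacle is verifying completeness, i.e., showing that no further configuration yields a global conflict. This reduces to a finite case analysis over ordered pairs drawn from the surviving rules; most such pairs are eliminated immediately by shape incompatibility at their interface (for instance, one rule demands a multiplication where another provides a crossing), and the remaining candidates reduce, by the left--right and source--target symmetries of the rewriting system, to the six enumerated families.
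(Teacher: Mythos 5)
Your overall strategy --- restrict attention to the rules that can host a threading wire, then enumerate the overlaps completed through a spacer --- is the right kind of argument (note that the paper itself states Proposition~\ref{simglob} without an explicit proof, as a direct inspection of the rules), but two of your key steps are wrong as stated. First, your dichotomy ``ordinary critical peaks share at least one gate, global conflicts share only wires'' contradicts both the paper's definition of global conflict and the six families themselves: in every one of the families the two rewrites \emph{do} share a gate, namely the third-layer crossing or multiplication (in family~1, for instance, the middle $\twocell{s}$ is rewritten by both Yang--Baxter applications, since the diagram has five crossings and each redex uses three). If the two redexes shared no gate they would be disjoint, and the branching would resolve trivially by commuting the two rewrites, hence would not be critical at all. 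What makes these conflicts \emph{global} is that the source of the gate-sharing peak is forced to contain an arbitrary untouched subdiagram $\phi$, threaded on the wire connecting the residual gates of the upper redex to those of the lower one; since substituting $\phi$ for a plain wire is not a contextual operation, each $\phi$ yields a genuinely distinct critical peak, whence the indexed family.

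Second, your participation criterion (a left-hand side can host a spacer only if it contains $s$-gates) is wrong in both directions and inconsistent with your own conclusion. The relevant feature is not the presence of crossings but the presence of a \emph{free wire} of the left-hand side passing beside the shared gate: family~6 is precisely an overlap of the rule with source $(s\ot 1)\circ(1\ot s)\circ(m\ot 1)$ with the associativity rule, whose source $(m\ot 1)\circ m$ you excluded as ``rigid''; it participates because the right input of its lower $m$-gate is a free wire, and that is exactly where $\phi$ threads. Your enumeration over the ``surviving'' rules therefore cannot produce family~6, and your labelling of family~4 as a Yang--Baxter completion is also off (its lower redex is the mixed rule, ending in $m$, not a third crossing). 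A correct count goes as follows: the upper redex must be one of the two left-hand sides whose bottom gate is flanked by a wire emanating from a gate of the same left-hand side --- Yang--Baxter (bottom gate $s$) and the mixed rule (bottom gate $m$); the lower redex must have the shared gate on top and a later gate fed from beside --- for a shared $s$ this allows Yang--Baxter, the mixed rule, the extra rule $(s\ot 1)\circ(1\ot m)\circ s$ and $\gamma$, while for a shared $m$ it allows $(m\ot 1)\circ s$ and the associator, giving $4+2=6$. Finally, your completeness argument leans on ``left--right and source--target symmetries'' that $\Sim$ does not possess --- the orientations of the structural rules break both --- so the finite case analysis must be carried out on the left-hand sides directly, as above, rather than quotiented by symmetries.
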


\begin{lemma}\label{reducedglob}
An irreducible $2$-cells of $\Sim$ have one of the following four class of shapes:
\vspace{-.1cm}$$
\twocell{(d) *1 (e *0 phi)*1 (1 *0 d)} \;, \quad
\twocell{(1 *0 d) *1 (1 *0 phi)*1 (1 *0 d)}\;, \quad  
\twocell{(1 *0 d) *1 (1 *0 phi23 )*1 (m *0 d)}\; , \quad
\twocell{(1 *0 d) *1 (1 *0 phi23)*1 (s *0 d) *1 (1*0 phi32) *1 (1*0 d)}\;.  $$
\begin{proof} 
\vspace{-.2cm}
In order to prove the lemma, we observe how the rewriting rules act on a $2$-cell belonging in one of the above class: a $2$-cell of the first two classes can be rewritten only in a $2$-cell of the same class, a $2$-cell of the third  one can be rewritten in one of the first three classes while a $2$-cell of the fourth in any of these classes.

Moreover, these are the only possible configurations where the number of gates we pass through following the left-most path going from the left-most input to the left-most output of a $2$-cell is less than two\footnote{we remind the reader that \twocell s is not a wire crossing but a gate with two inputs and two outputs, this is $\twocell s = {\raisebox{-2.50pt}{\begin{tikzpicture} \begin{scope} [ x = 10pt, y = 10pt, join = round, cap = round, thick, black, solid, -]  \draw (0.00,0.00)--(0.00,-0.25) (1.00,0.00)--(1.00,-0.25) ; \draw[lightgray] (0.00,0.50)--(1.00,0.00) (1.00,0.50)--(0.00,0.00) ;\draw [rounded corners = 1pt] (-0.25,0) rectangle (1.25,0.50) ; \draw (0.00,0.75)--(0.00,0.50) (1.00,0.75)--(1.00,0.50) ; \end{scope} \end{tikzpicture}}} $}. In fact, if this condition is not satisfy, we note that a $2$-cell can not be irreducible. In fact, by induction over the number of gates in a $2$-cell, the possible shapes with irreducible context are the following:
\vspace{-.1cm}$$
\twocell{(1 *0 d) *1 (1 *0 phi24 )*1 (m *0 1 *0 d) *1 (m*0 2)}\; , \;
\twocell{(1 *0 d) *1 (1 *0 phi23 )*1 (s *0 d) *1 (m *0 2)}\; , 
\twocell{(2 *0 d) *1 (2 *0 phi23 )*1 (m  *0 1 *0 d) *1 (s *0 2)*1 (1 *0 phi32) *1 (1 *0 d)} , \;
\twocell{(1 *0 d) *1 (1 *0 phi23)*1 (s *0 d) *1 (s *0 2) *1 (1 *0 phi32) *1 (1 *0 d)}\; , \;
\twocell{(1 *0 d) *1 (1 *0 phi24)*1 (s *0 1 *0 2) *1 (1 *0 s *0 d) *1 (m *0  phi32) *1 (1 *0 d)}\; , \;
\twocell{(1 *0 d) *1 (1 *0 phi24)*1 (s *0 1 *0 2) *1 (1 *0 s *0 d) *1 (s *0 1 *0 2) *1 (1 *0 phi42) *1 (1 *0 d)}\; , \;
{\raisebox{-25.00pt}{\begin{tikzpicture} \begin{scope} [ x = 10pt, y = 10pt, join = round, cap = round, thick, black, solid, -] \draw (0.00,5.25)--(0.00,5.00) (2.00,5.25)--(2.00,5.00) (3.00,5.25)--(3.00,5.00) ; \draw (0.00,0.00)--(0.00,-0.25) (1.50,0.00)--(1.50,-0.25) (3.00,0.00)--(3.00,-0.25) (4.00,0.00)--(4.00,-0.25) ;  \draw [dash pattern = on 0.25pt off 2pt] (2.25,5.00)--(2.75,5.00) ; \draw (0.00,5.00)--(0.00,4.25) (2.00,5.00)--(2.00,4.75) (3.00,5.00)--(3.00,4.75) ;  \draw [rounded corners = 1pt, fill = white] (0.75,3.75) rectangle (4.25,4.75) ; \node at (2.50,4.25) {$\scriptstyle \phi$} ; \draw (0.00,4.25)--(0.00,3.50) (1.00,3.75)--(1.00,3.50) (2.00,3.75)--(2.00,3.25) (3.00,3.75)--(3.00,3.25) (4.00,3.75)--(4.00,3.25) ; \draw (0.00,3.50)--(1.00,3.00) (1.00,3.50)--(0.00,3.00) ;  \draw (0.00,3.00)--(0.00,2.50) (1.00,3.00)--(1.00,2.75) (2.00,3.25)--(2.00,2.75) (3.00,3.25)--(3.00,2.50) (4.00,3.25)--(4.00,2.50) ;  \draw [fill = black] (1.00,2.75)--(2.00,2.75)--(1.50,2.25)--cycle ; \draw [dash pattern = on 0.25pt off 2pt] (3.25,2.50)--(3.75,2.50) ; \draw (0.00,2.50)--(0.00,2.00) (1.50,2.25)--(1.50,2.00) (3.00,2.50)--(3.00,1.75) (4.00,2.50)--(4.00,1.75) ; \draw (0.00,2.00)--(1.50,1.50) (1.50,2.00)--(0.00,1.50) ;  \draw (0.00,1.50)--(0.00,0.75) (1.50,1.50)--(1.50,1.25) (3.00,1.75)--(3.00,1.25) (4.00,1.75)--(4.00,0.75) ;  \draw [rounded corners = 1pt, fill = white] (1.25,0.25) rectangle (4.25,1.25) ; \node at (2.75,0.75) {$\scriptstyle \phi'$} ;  \draw (0.00,0.75)--(0.00,0.00) (1.50,0.25)--(1.50,0.00) (3.00,0.25)--(3.00,0.00) (4.00,0.25)--(4.00,0.00) ;  \draw [dash pattern = on 0.25pt off 2pt] (1.88,0.00)--(2.63,0.00) ;  \end{scope} \end{tikzpicture}}} \; ,  $$\vspace{-.1cm}
 which are all reducible.
\end{proof}  
\end{lemma}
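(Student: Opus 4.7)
My plan is to prove this lemma by strong induction on the number of gates in the 2-cell $\psi$, combined with a case analysis on the local structure at the left edge of the diagram. The base case, $\psi$ with no gate, is a pure identity diagram and matches the second shape trivially. The key idea for the induction step is that a 2-cell is irreducible only when its leftmost portion is very short, in the sense that at most one gate actually touches the left boundary of the diagram; once this is established, peeling off the leftmost gate (or identity wire) displays $\psi$ as one of the four listed shapes, with the remaining subdiagram serving as $\phi$ (or as the pair $\phi_{23},\phi_{32}$ in the fourth shape).

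More precisely, I would introduce a notion of \emph{left spine} for $\psi$: the sequence of gates one visits by tracing the leftmost strand of the diagram, starting either from the leftmost input wire of $\psi$ or, if the leftmost output is produced by an $\twocell{e}$-gate, from that $\twocell{e}$-gate. In each of the four shapes displayed in the lemma the left spine has length at most one: a single $\twocell{e}$-gate in the first shape, no gate in the second, a single $\twocell{m}$-gate in the third, and a single $\twocell{s}$-gate in the fourth. So the content of the proof is to rule out 2-cells whose left spine contains two or more gates. I would do this by a case analysis on the topmost two consecutive spine gates: combinations such as $\twocell{m}$ followed by $\twocell{m}$, $\twocell{s}$ followed by $\twocell{m}$, $\twocell{m}$ followed by $\twocell{s}$, or two consecutive $\twocell{s}$-gates match directly the left-hand sides of the associator, $\tau$-, structural, or double-swap rules of $\Sim$, contradicting irreducibility. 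Likewise, an $\twocell{e}$-gate at the top of the spine followed by any second spine gate immediately exhibits one of the unitor or $e$-naturality redexes.

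The subtlety, and the main obstacle, is that some configurations look innocent on the spine alone and become reducible only after inspecting a small window of wires just beside the spine. This is the case for the three-gate rules whose left-hand sides begin with an $\twocell{s}$-gate in parallel with an identity wire: the redex involves three gates of which only two sit on the spine proper. To handle them I would enrich the spine analysis with a shallow neighbourhood of sibling wires, and then verify by an exhaustive case-by-case check against the complete list of rewriting rules in Definition \ref{SimDef} that every left spine of length at least two, viewed together with this small neighbourhood, instantiates the left-hand side of some rule. The induction hypothesis applied to the peeled-off subdiagram $\phi$ then automatically ensures that $\phi$ itself is of one of the four allowed shapes, closing the argument.
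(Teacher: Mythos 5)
Your proposal is correct and follows essentially the same route as the paper: the paper's proof likewise characterizes irreducibility by the left-most path from the left-most input to the left-most output passing through fewer than two gates, and rules out longer spines by induction on the number of gates via an exhaustive list of two-spine-gate configurations with irreducible context, including exactly the three-gate redexes you flag (the rules $(\twocell{s} *\; \twocell{1})\circ(\twocell{1} *\;\twocell{s})\circ(\twocell{m} *\;\twocell{1})$, the Yang--Baxter rule, and $(\twocell{s}*\;\twocell{1})\circ(\twocell{1}*\;\twocell{m})\circ\twocell{s}$) whose middle gate lies off the spine and requires your ``shallow neighbourhood'' inspection. Your extra step of applying the induction hypothesis to the peeled-off subdiagram $\phi$ is a harmless strengthening and corresponds to how the paper constrains the irreducible context surrounding its displayed configurations.
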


\begin{lemma}[Global conflict solution]\label{glob}
In a confluent rewriting system $(\sig, \R)$ all critical peaks in a global conflict are confluent if and only if all reduced global conflicts are.
\begin{proof} 
The left-to-right implication is trivial. In order to prove the right-to-left implication it suffice to remark that every $2$-cell $ \phi$ associated to a  of a critical peak in a global conflict reduces, by the convergence, to a reduced global conflict $\phi$. If this latter admits the following confluence diagram:

\centerline{
$$\xymatrix@R=.5em{
& &\D_1 \ar@3[drr]^{Q_{1P_1}} \\
\D \ar@3[drr]_{Q_1}\ar@3[urr]^{P_1}& & && \D'\\
& &\D_2 \ar@3[urr]_{P_{1Q_1}}
},$$}

then the confluence diagram of $ \phi$ is the following:

\centerline{
$$\xymatrix@R=.8em@C=1em{
&& \D_1 \ar@3[rr]^{~R~*}& &\D_1' \ar@3[drr]^{*}_{Q_{1P_1}} \\
 \D \ar@3[rr]_R \ar@3[drr]_{Q_1}\ar@3[urr]^{P_1} && \D' \ar@3[drr]^{Q_1}\ar@3[urr]_{P_1}& & && \hat \D\\
&&\ \D_2 \ar@3[rr]_R^{~~~*}& &\D_2' \ar@3[urr]^{*}_{~~P_{1Q_1}}
}.$$}
\end{proof}
\end{lemma}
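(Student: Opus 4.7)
The forward implication is essentially by definition: every reduced global conflict is itself a critical peak in a global conflict (the one where $\phi$ happens to be irreducible), so a confluence diagram for every critical peak in the global conflict specializes to one for every reduced instance.

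For the reverse implication, my plan is to reduce an arbitrary instance to the reduced case by exploiting the convergence hypothesis on the system and the fact that the gates in the parameter subdiagram $\phi$ are, by definition of a global conflict, disjoint from the gates rewritten in the critical peak $(P_1,P_2)$. Given any $2$-cell $\D$ in the global conflict with parameter $\phi$, I would first use termination and confluence to produce a rewriting path $R:\phi \Rrightarrow \phi^*$ reducing $\phi$ to an irreducible $2$-cell $\phi^*$. Plugging $\phi^*$ in place of $\phi$ into the same context yields a $2$-cell $\D^*$ which, by construction, is a reduced global conflict sharing the same critical peak shape.

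The key observation, which drives the whole construction, is that since no gate of $\phi$ is rewritten by $P_1$ or $P_2$, the rewriting path $R$ acts on a part of the diagram completely orthogonal to the critical peak. By the interchange rule and the contextual closure of $\R$, I can therefore apply $R$ either before performing $P_1,P_2$ or in parallel after each of the two branches. This gives a commutative square: starting from $\D$ we may first reduce via $R$ to $\D^*$ and then fire the critical peak, or fire the critical peak first on $\D$ and then reduce the $\phi$-part to $\phi^*$ in each branch. Applying the hypothesis to $\D^*$, which is a reduced global conflict, produces a confluence diagram with common reduct $\hat\D$. Gluing this diagram to the two orthogonal squares yields the desired confluence diagram for $\D$, with the same common reduct $\hat\D$.

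The main obstacle I anticipate is a bookkeeping one rather than a conceptual one: showing carefully that $R$ applied inside the context commutes with the rewriting paths $P_1,Q_1$ and $P_2,Q_2$ of the critical peak solution, so that the composite diagram closes. Once the orthogonality of the supports of $R$ and of the critical peak is used, this reduces to iterated applications of the interchange rule and the contextual closure of $\R$; termination ensures that $R$ indeed exists and is finite, while confluence of the ambient system ensures that $\phi^*$ is uniquely determined, so no ambiguity arises in the construction.
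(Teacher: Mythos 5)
Your proposal is correct and follows essentially the same route as the paper: reduce the parameter subdiagram $\phi$ to its irreducible form via a path $R$ (existence by termination, uniqueness by confluence), use the disjointness of the supports of $R$ and of the critical peak to commute $R$ past each branch, and glue the resulting squares onto the confluence diagram of the reduced global conflict. The commuting squares you describe are precisely the left-hand portion of the paper's composite diagram, with the reduced conflict's solution closing the diagram at $\hat\D$.
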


\begin{teor}\label{simconv}

The rewriting system $\Sim$ is convergent.

\begin{proof} 
The rewriting system is terminating after Proposition \ref{termsim}, then  it suffices to prove the local confluence in order to have convergence. After Proposition \ref{simglob} and Lemma \ref{reducedglob}, by Lemma \ref{glob}, the local convergence is proved verifying the following $68$ critical peaks. We will handle them in batches: 
\begin{itemize}
\item $5$  \emph{coherence} peaks:
\MLine{\begin{gathered}
\twocell{(m *0 2) *1 (m*0 1) *1 m}						\qquad
\twocell{(1 *0 e *0 1) *1 (m *0 1)*1 m} 					\qquad
\twocell{s *1 s *1 m}								\qquad
\twocell{(s *0 1) *1  (m *0 1) *1 m}						\qquad
\twocell{(m *0 1) *1 s *1 m}							\qquad
\end{gathered}}
each conflict is generated by two non-structural rules and the confluence of diagrams is given by non-structural rules. Their confluence diagrams correspond to the coherence conditions given in Definitions \ref{MC} and \ref{SMC2};

\item $5$  \emph{Kelly}-peaks:
\MLine{
\begin{gathered}
\twocell{(e *0 2) *1 (m *0 1)*1 m} 						\quad
\twocell{(2 *0 e) *1 (m *0 1)*1 m} 						\quad
\twocell{(e *0 e) *1 m}								\quad
\twocell{(s *0 2) *1 (1 *0 m *0 1) *1 (m *0 1) *1 m} 				\quad
\twocell{(2 *0 e) *1 (s *0 1)*1 (1 *0 m)*1 m} 					
\end{gathered}
}
each conflict is generated by two non-structural rules and the confluence of diagrams is given by non-structural rules;

\item $12$  \emph{weak-Kelly}-peaks:
\MLine{
\begin{gathered}
\twocell{(1*0 e) *1 s *1 m}							\; \;
\twocell{(e*0 1) *1 s *1 m}							\; \;
\twocell{(1 *0 e *0 1) *1 (s *0 1)*1 (1 *0 m)*1 m} 				\; \;
\twocell{(e *0 2) *1 (s *0 1)*1 (1 *0 m)*1 m} 					\; \;
\twocell{(s *0 1) *1 (s *0 1) *1 (1 *0 m) *1 m} 				\; \; 
\twocell{(s *0 1) *1 (1 *0 m) *1 (s) *1 m } 					\; \;
\twocell{(m *0 2) *1 (s *0 1) *1 (1 *0 m) *1 m} 				\; \; 
\twocell{(s *0 2) *1 (1 *0 m *0 1) *1 (s *0 1) *1 (1 *0 m) *1 m} 		\; \; 
\twocell{(s *0 1) *1 (1*0 s) *1 (s *0 1) *1 (1 *0 m) *1 m}	 		\; \; 
\twocell{(s *0 2) *1 (1*0 s *0 1) *1 (s *0 m) *1 (1 *0 m) *1 m}	 	\; \; 
\twocell{(s *0 1) *1 (1 *0 s) *1 (m *0 1) *1 m}					\; \; 
\twocell{(s *0 2) *1 (1 *0 s *0 1) *1 (m *0 m) *1 m}				 
\end{gathered}
}
each conflict is generated by a non-structural rule and a structural rule. This latter is the only structural rule in the confluence diagram;

\item $18$ \emph{simply-foldable} peaks: 
\MLine{
\begin{gathered}
\twocell{(s*0 1) *1 (m *0 1) *1 s} 							\quad
\twocell{(m*0 2) *1 (m *0 1) *1 s} 							\quad
\twocell{(e *0 2) *1 (s *0 1)*1 (1 *0 m)*1 s}	 					\quad
\twocell{(2 *0 e) *1 (s *0 1)*1 (1 *0 m)*1 s}	 					\quad
\twocell{(e *0 2) *1 (m *0 1) *1 s}	 						\quad
\twocell{(1 *0 e *0 1) *1 (m *0 1) *1 s}	 						\quad
\twocell{(1 *0 e *0 1) *1 (s *0 1)*1 (1 *0 s)*1 (m *0 1)} 				\quad
\twocell{(2 *0 e) *1 (s *0 1)*1 (1 *0 s)*1 (m *0 1)} 					\quad
\twocell{(m *0 2) *1 (s *0 1)*1 (1 *0 m)*1 s} 						\\
\twocell{(s *0 2) *1 (1 *0 m *0 1) *1 (m *0 1) *1 s}					\quad
\twocell{(s *0 1) *1 (1 *0 s) *1 (s *0 1) *1 (m *0 1)  } 				\quad
\twocell{(s *0 2) *1 (1 *0 m *0 1) *1 (s *0 1) *1 (1 *0 s) *1(m *0 1)} 		\quad
\twocell{(s *0 1) *1 (1*0 s) *1 (s *0 1)*1 (1 *0 m)*1 s} 				\quad
\twocell{(s *0 2) *1 (1*0 s *0 1) *1 (s *0 m)*1 (1 *0 m ) *1 (s)}			\quad
\twocell{(s *0 1) *1 (1*0 s) *1 (s *0 1) *1 (1 *0 s) *1 (m *0 1)} 			\quad
\twocell{(s *0 2) *1 (1*0 s *0 1) *1 (s *0 m)*1 (1 *0 s) *1 (m *0 1)}		\quad
\twocell{(s *0 2) *1 (1*0 s *0 1) *1 (s *0 s)*1 (1 *0 m *0 1) *1 (m *0 1)}		\quad
\twocell{(s *0 2) *1 (1*0 s *0 1) *1 (m *0 s)*1 (m *0 1)}				
\end{gathered}
}
the confluence diagrams are made of parallel $3$-cells composed of the one occurrence of same non-structural rule and some structural ones;

\item $28$ \emph{strongly-foldable} peaks:
\MLine{
\begin{gathered}
\twocell{s *1 s *1 s}								\;
\twocell{(m*0 1) *1 s *1 s}							\;
\twocell{(e*0 1) *1 s *1 s}							\;
\twocell{(1*0 e) *1 s *1 s}							\;
\twocell{(e *0 e) *1 s}								\;
\twocell{(e *0 2) *1 (s *0 1)*1 (1 *0 s)*1 (s *0 1)} 				\;
\twocell{(1 *0 e *0 1) *1 (s *0 1)*1 (1 *0 s)*1 (s *0 1)} 			\;
\twocell{(2 *0 e) *1 (s *0 1)*1 (1 *0 s)*1 (s *0 1)} 				\;
\twocell{(1 *0 e *0 1) *1 (s *0 1)*1 (1 *0 m)*1 s}			 	\;
\twocell{(2 *0 e) *1 (m *0 1) *1 s}	 					\;
\twocell{(e *0 2) *1 (s *0 1)*1 (1 *0 s)*1 (m *0 1)} 				\\
\twocell{(s *0 1) *1 (s *0 1)*1 (1 *0 s)*1 (s *0 1)} 				\;
\twocell{(s *0 1)*1 (1 *0 s)*1 (s *0 1) *1 (s *0 1)} 				\;
\twocell{(s *0 1) *1 (s *0 1)*1 (1 *0 m)*1 s} 					\;
\twocell{(s *0 1)*1 (1 *0 m)*1 s *1 s} 						\;
\twocell{(s *0 1) *1 (s *0 1) *1 (1 *0 s) *1 (m *0 1)}				\;
\twocell{(s *0 2) *1 (1 *0 m *0 1) *1 (s *0 1)*1 (1 *0 s)*1 (s *0 1)}	\;
\twocell{(m*0 2)*1 (s *0 1)*1 (1 *0 s) *1 (s *0 1)}				\;
\twocell{(s *0 2) *1 (1 *0 m *0 1) *1 (s *0 1)*1 (1 *0 m)*1 s}		\;
\twocell{(m *0 2) *1 (s *0 1) *1 (1 *0 s) *1 (m *0 1) } 			\\
\twocell{(s *0 1) *1 (1*0 s) *1 (s *0 1)*1 (1 *0 s)*1 (s *0 1)} 		\;
\twocell{(s *0 2) *1 (1*0 s *0 1) *1 (s *0 s)*1 (1 *0 s *0 1) *1 (s *0 2)}	\;
\twocell{(s *0 2) *1 (1*0 s *0 1) *1 (s *0 m)*1 (1 *0 s)*1 (s *0 1)} 	\;
\twocell{(s *0 2) *1 (1*0 s *0 1) *1 (s *0 s)*1 (1 *0 m *0 1) *1 (s *0 1)}	\;
\twocell{(s *0 1) *1 (1*0 s) *1 (m *0 1) *1 s}					\;
\twocell{(s *0 2) *1 (1*0 s *0 1) *1 (m *0 s)*1 (s *0 1)}			\;
\twocell{(s *0 2) *1 (1*0 s *0 1) *1 (m *0 m)*1 s}				\;
\twocell{(s *0 2) *1 (1*0 s *0 1) *1 (s *0 s)*1 (1 *0 s *0 1) *1 (m *0 2)}	
\end{gathered}
}
the $3$-cells in confluence diagrams are made of structural rules: 
\end{itemize}

The confluence diagrams for these critical peaks are discussed in Appendix \ref{appConf}.
\end{proof}

\end{teor}

\begin{cor}\label{monconv}
The rewriting systems $\M$ is convergent.
\begin{proof} 
Since  $\M$ terminates (Corollary \ref{monter}), the convergence of $\M$ can be evinced by Proposition \ref{simconv}. In fact, the solutions of its critical peaks 
$$
\twocell{(( m *0 2) *1 m *0 1) *1 m}  \qquad \qquad
\twocell{((1 *0 e *0  1)*1 (m *0 1)) *1 m} \qquad \qquad
\twocell{((e *0 2)*1 (m *0 1)) *1 m} \qquad \qquad
\twocell{((2 *0 e)*1 (m *0 1)) *1 m} \qquad \qquad
\twocell{(e *0 e)*1 m}
$$
are made only of $3$-cells in $\M$ (See Appendix \ref{appconf}).
\end{proof}
\end{cor}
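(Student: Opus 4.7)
My plan is to reduce the corollary to Newman's lemma (Proposition \ref{termconf}) plus a direct inspection argument that piggybacks on the confluence proof of $\Sim$.

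First, termination of $\M$ is already granted by Corollary \ref{monter}, so by Proposition \ref{termconf} it is enough to exhibit a solution for each critical peak of $\M$. Since $\M$ is obtained from $\Sim$ by restricting the signature to $\{\twocell m,\twocell e\}$ and keeping only the monoidal rules (\ref{monr}), the critical peaks of $\M$ are exactly the critical peaks of $\Sim$ whose source uses no \twocell s-gate. Enumerating overlaps among the three rules $\alpha$, $\lambda$, $\rho$ of (\ref{monr}) yields precisely the five peaks displayed in the statement, which correspond to the pentagonal identity, the triangular identity, and the three ``Kelly'' peaks $\twocell{(e *0 2) *1 (m *0 1)*1 m}$, $\twocell{(2 *0 e) *1 (m *0 1)*1 m}$, and $\twocell{(e *0 e) *1 m}$.

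Next, I would observe the following stability property: since no rule of $\Sim$ introduces a \twocell s-gate or a braiding-related rule out of a diagram in which no \twocell s occurs, every reduction path out of an $\M$-diagram stays inside $\M$. In particular, the solutions produced for these five peaks in the proof of Proposition \ref{simconv} (and detailed in Appendix \ref{appConf}) cannot use the rules $\tau$, $\gamma$, nor the structural rules (\ref{manr}), because none of those rules is applicable to any diagram reachable from a source built only from $\twocell m$ and $\twocell e$. Thus the confluence diagrams provided by $\Sim$ restrict to confluence diagrams entirely inside $\M$.

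Finally, I would conclude by Newman's lemma: all five critical peaks of $\M$ are solvable using only $\M$-rules, so $\M$ is locally confluent, and together with termination this yields convergence. The only step requiring real attention is the stability observation; it is essentially a syntactic check on the left-hand sides of rules in (\ref{newr}) and (\ref{manr}), each of which contains at least one $\twocell s$-gate, hence cannot fire on an $\M$-diagram. I do not foresee any genuine obstacle, since the heavy lifting has already been carried out in the proof of Proposition \ref{simconv}.
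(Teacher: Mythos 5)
Your proof is correct and follows essentially the same route as the paper: termination (Corollary \ref{monter}), solvability of the same five critical peaks imported from the convergence proof of $\Sim$ (Proposition \ref{simconv} and Appendix \ref{appConf}), and Newman's lemma (Proposition \ref{termconf}). The one genuine difference lies in how you certify that the imported solutions live inside $\M$: the paper simply inspects the confluence diagrams in the appendix and observes that they consist only of $3$-cells of $\M$, whereas you prove this a priori through your stability observation --- the left-hand side of every rule in (\ref{newr}) and (\ref{manr}) contains a $\twocell{s}$-gate, while no right-hand side of a rule in (\ref{monr}) creates one, so the set of $\twocell{s}$-free diagrams is closed under $\Sim$-rewriting and $\Sim$-reduction restricted to it coincides with $\M$-reduction. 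Your identification of the critical peaks of $\M$ with the $\twocell{s}$-free critical peaks of $\Sim$ is also sound, since a minimal overlap is $\twocell{s}$-free exactly when both overlapping left-hand sides are monoidal. What your variant buys is robustness: any solution of these five peaks in $\Sim$ is automatically a solution in $\M$, independently of the particular confluence diagrams chosen in the appendix, at the modest price of the extra syntactic check; the paper's inspection is shorter but would need to be redone if the appendix diagrams were replaced.
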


\begin{cor}\label{structconf}
If $\phi$ is a diagram in $\Sim$, then there is a unique $\bar \phi$ in $\Sim$ such that $\xymatrix@C=.8em{\phi \ar@3^{*}[r] & \bar \phi}$ and no structural rule can be applied to $\bar \phi$.
\end{cor}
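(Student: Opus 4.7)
\medskip

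\noindent\textbf{Plan of proof.} The strategy is to show that the subsystem $\Sim_{\mathrm{str}} \subseteq \Sim$ consisting only of the structural rules (set (\ref{manr})) is itself a convergent rewriting system; then $\bar\phi$ is the (necessarily unique) normal form of $\phi$ with respect to $\Sim_{\mathrm{str}}$.

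First, termination of $\Sim_{\mathrm{str}}$ is inherited directly from Proposition \ref{termsim}: the monotone interpretation $[\cdot]:\Sim^*\to\mathbb N^*$ strictly decreases along any rule of (\ref{manr}) in particular, so no infinite chain of structural reductions can exist. Hence by Newman's lemma (Proposition \ref{termconf}) it is enough to establish local confluence of $\Sim_{\mathrm{str}}$.

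Next, I would observe that the critical peaks of $\Sim_{\mathrm{str}}$ are exactly those critical peaks of $\Sim$ whose two overlapping rules are both structural. Scanning the batches enumerated in Theorem \ref{simconv}, the \emph{coherence}, \emph{Kelly} and \emph{weak-Kelly} batches each involve at least one non-structural rule in the peak, and so do the \emph{simply-foldable} ones; exactly the $28$ \emph{strongly-foldable} peaks are generated by two structural rules. For these, Theorem \ref{simconv} already exhibits confluence diagrams whose $3$-cells use only structural rules. Therefore each critical peak of $\Sim_{\mathrm{str}}$ is solvable inside $\Sim_{\mathrm{str}}$, giving local confluence.

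Combining termination and local confluence via Newman's lemma yields convergence of $\Sim_{\mathrm{str}}$, which gives existence and uniqueness of the normal form $\bar\phi$ satisfying $\xymatrix@C=.8em{\phi \ar@3^{*}[r] & \bar\phi}$ with no structural rule applicable to $\bar\phi$. The only delicate point, and hence the main thing to double-check, is the bookkeeping claim that the confluence diagrams of all $28$ strongly-foldable critical peaks truly stay inside the structural fragment — this is what allows Newman's lemma to be applied to $\Sim_{\mathrm{str}}$ rather than only to $\Sim$, and is verified by the case analysis deferred to Appendix \ref{appConf}.
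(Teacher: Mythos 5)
Your plan hinges on the claim that the peaks generated by two structural rules are exactly the $28$ strongly-foldable ones, and that claim is false: the paper's batches classify peaks by the shape of their \emph{solutions}, not by which rules create the overlap, and you inferred the latter from the former. A concrete counterexample sits in the paper's own simply-foldable list: the peak \twocell{(e *0 2) *1 (s *0 1)*1 (1 *0 m)*1 s} is the overlap of the structural rule $\rew{\twocell{(e *0 1) *1 s }}{\twocell{(1 *0 e) *1 2}}$ with the structural rule $\rew{\twocell{(s *0 1) *1 (1 *0 m) *1 s }}{\twocell{(1 *0 s ) *1 (m *0 1)}}$, yet its confluence diagram in Appendix \ref{appConf} closes \emph{both} branches with the non-structural rule $\twocell{ l }$. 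The same happens for \twocell{(1 *0 e *0 1) *1 (s *0 1)*1 (1 *0 s)*1 (m *0 1)} (two structural rules, joined via $\twocell{ l }$) and \twocell{(2 *0 e) *1 (s *0 1)*1 (1 *0 s)*1 (m *0 1)} (two structural rules, joined via $\twocell{ r }$). So the case analysis you defer to the appendix, which you correctly flagged as the delicate point, is exactly where the argument breaks.

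Worse, this is not a repairable omission in the enumeration: it shows the structural fragment $\Sim_{\mathrm{str}}$ is genuinely \emph{not} locally confluent, so no Newman-style argument confined to structural rules can succeed. In the first counterexample the two structural one-step reducts \twocell{(1 *0 e *0 1) *1 (1 *0 m ) *1 s } and \twocell{ (e *0 s) *1 (m *0 1)} are distinct diagrams in the free $\mathbf{PRO}$, and neither admits any structural redex (the only redex in either is a $\lambda$-redex; note that with Lafont's orientation \twocell{ (1 *0 m)*1 s } is the \emph{target}, not the source, of a structural rule), so they are two different structural normal forms of one diagram, reachable by structural rules alone. Your restriction of the interpretation of Proposition \ref{termsim} does correctly give termination of $\Sim_{\mathrm{str}}$, hence \emph{existence} of a structural normal form, but uniqueness cannot be obtained your way. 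For comparison, the paper states Corollary \ref{structconf} with no proof at all, as a bare consequence of the convergence of the full system $\Sim$ (Theorem \ref{simconv}); your attempt, read against the paper's own appendix, in fact exposes that the uniqueness assertion needs either the full rewriting system (reading $\bar\phi$ as a $\Sim$-normal form) or some further restriction, and in any case cannot be justified by convergence of the structural subsystem.
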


\section{Coherence of symmetric monoidal categories}

In this section, we give a proof of coherence theorems using rewriting. Since the confluence of our rewriting systems which present functors and natural transformations of these theories are proved, we are able to define a $4$-cell for any pair of parallel $3$-cell once a sufficient large set of $4$-cells is defined. 
Finally we are able to prove the theorems establishing a correspondence between $4$-cell and commutative or trivial diagrams.

\begin{defi}
We extend the rewriting system $\Sim$ to a $(4,3)$-polygraph $\bar\Sim$ by adding the following (unoriented) $4$-cells:

$$
\resizebox{10cm}{!}{
\xymatrix{
& {\twocell{(( 1 *0 m *0 1) *1 m *0 1) *1 m}} \ar@3[rr]^{\twocell{( 1 *0 m *0 1) *1 alpha }}&& {\twocell{((1 *0 m *0 1) *1 1 *0 m) *1 m}} \ar@3[dr]^{\twocell{(1*0 alpha)*1 m}} \\
{\twocell{(( m *0 2) *1 m *0 1) *1 m} } \ar@3[ur]^{\twocell{(alpha *0 1) *1 m}} \ar@3[drr]_{\twocell{(m*0 2) *1 alpha}}&&{\twocell{penta}}&& {\twocell{(( 2 *0 m ) *1 (1 *0 m)) *1 m}} \\
& &{\twocell{( m *0 m)  *1 m }}\ar@3[urr]_{\twocell{(2*0 m)*1 alpha}}
}
\qquad
\xymatrix{
{\twocell{((1 *0 e *0  1)*1 (m *0 1)) *1 m}} \ar@3[rr]^{\twocell{(r *0 1) *1 m}}  \ar@3[dr]_{\twocell{(1 *0 e *0 1) *1 alpha}} & & {\twocell{m}} \\
& {\twocell{((1 *0 e *0  1)*1 (1 *0 m))*1 m}} \ar@3[ur]^{\twocell{tria}  ~~~~}_{\twocell{(1*0 l)*1 m}}
}
}
$$
$$
\resizebox{10cm}{!}{
\xymatrix{
\twocell{s *1 s *1 m} \ar@3[rr] \ar@3[dr]_{\twocell{ s *1 tau}} & & {\twocell{m}} \\
& \twocell{s *1 m} \ar@3[ur]_{\twocell{tau}}^{\twocell{inv}  ~~~}
}
\qquad
\xymatrix{
& \twocell{( m *0 1) *1 m} \ar@3[rr]^{\twocell{alpha}}& & \twocell{(1 *0 m)*1 m}  &
\\
\twocell{(s *0 1) *1  (m *0 1) *1 m} \ar@3[dr]_{\twocell{(s *0 1) *1 alpha}}  \ar@3[ur]^{\twocell{(tau *0 1) *1 m}} & \twocell{g}
\\
&\twocell{(s *0 1) *1 ( 1 *0 m) *1 m}\ar@3[uurr]_{\twocell{gamma}} 
}
\qquad
\xymatrix{
&&  \twocell{(1*0 s) *1 (1 *0 m)*1 m}\ar@3[dr]^{\twocell{(1*0 tau) *1 m}} 
\\
&  &\twocell{exa2}& \twocell{(1 *0 m)*1 m}
\\
  \twocell{(1 *0 s) *1 (s *0 1) *1  (1 *0 m) *1 m}  \ar@3[uurr]^{\twocell {(1*0 s)*1 gamma}} &\twocell{(m*0 1) *1 s *1 m}\ar@3[l]\ar@3[r]_{\twocell{(m *0 1) *1 tau}}& \twocell{( m *0 1) *1 m} \ar@3[ru]_{\twocell{alpha}}
}
}
$$
plus the set of $4$-cells with border a solution of a simply-foldable or strongly-foldable critical peak (we denote them with the symbol $\comm$).

Similarly $\bar \M$ is the  $(4,3)$-polygraph  obtained adding to $\M$ the $4$-cells $\twocell {penta}$ and $\twocell{tria}$.
\end{defi}

\begin{prop}[Cellular Kelly's lemma for SMC]\label{kellysim} \hfill \\ 
For all \emph{Kelly-peaks} and \emph{weak-Kelly-peaks}, a $4$-cell can be defined from set of $4$-cells in $\bar \Sim$.
\begin{proof}
See Appendix \ref{CellyKelly}
\end{proof}
\end{prop}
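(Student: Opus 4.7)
The plan is to adapt Kelly's classical decomposition strategy to the string-diagrammatic setting and exhibit, for each of the five Kelly-peaks and each of the twelve weak-Kelly-peaks, an explicit paving of its confluence diagram by the 4-cells already present in $\bar \Sim$. Since $\bar \Sim$ contains 4-cells for the pentagon, the triangle, the involution of $\sym$, and the two $\gamma$-decompositions of the hexagon, together with a 4-cell $\comm$ for every simply-foldable and every strongly-foldable peak (i.e.\ a naturality square for each structural rule), the only real content is the combinatorial identification of these pavings. No new rewriting arguments are needed: the proof is purely diagrammatic, constructing composite 4-cells out of the eight primitive ones.

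First I would treat the three classical Kelly-peaks whose sources are $(\e \pro 2)\circ(m \pro 1)\circ m$, $(2 \pro \e)\circ(m \pro 1)\circ m$, and $(\e \pro \e)\circ m$. Under the interpretation of diagrams as morphisms these correspond exactly to the three commutative diagrams of Kelly's lemma stated in Section 2, so they can be handled by reproducing Kelly's original argument string-diagrammatically: embed the peak inside a pentagon one of whose four factors is the unit $\e$, and fill the three residual faces with a triangle 4-cell and the naturality 4-cells supplied by the foldable peaks. The two remaining Kelly-peaks mix the braiding with either a unit or with two multiplications; here the construction instead pastes one of the $\gamma$-hexagons with an involution 4-cell and with one of the unit-Kelly 4-cells just constructed, closing the leftover squares with the appropriate foldable 4-cells for structural rules such as $(\e \pro 1)\circ s \to (1 \pro \e)$.

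For the twelve weak-Kelly-peaks each conflict is generated by exactly one non-structural and one structural rule, and the structural rule already provides a foldable naturality 4-cell by the very definition of $\bar\Sim$. The strategy is to order the twelve peaks by complexity so that, after substituting this naturality 4-cell for the structural step, the residual diagram becomes a composite of previously constructed 4-cells — either a coherence 4-cell or a Kelly-peak 4-cell from the first half of the proof. A straightforward induction on this ordering then closes the argument.

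The main obstacle I expect is bookkeeping: seventeen diagrams with five to seven vertices each admit several competing decompositions, and the chosen pavings must be globally consistent with the orientations of the shared interior edges. A genuine secondary difficulty arises in peaks that simultaneously involve $\sym$ and $\e$, such as $(2 \pro \e)\circ (s \pro 1)\circ(1 \pro m)\circ m$, where an involution 4-cell, a $\gamma$-hexagon, and a foldable 4-cell for a unit--braiding interaction must all share common faces with compatible orientations. Verifying this compatibility case by case is the technical heart of the argument and the reason the full enumeration is deferred to Appendix \ref{CellyKelly}.
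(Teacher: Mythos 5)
Your overall architecture does match the paper's: the proof really is nothing but an explicit paving of each of the seventeen confluence diagrams by the five coherence $4$-cells, the foldable $4$-cells $\comm$, strict interchange equalities, and previously constructed Kelly $4$-cells, bootstrapped in order of complexity exactly as you propose for the weak-Kelly-peaks (the appendix's double-boxed cells are precisely this induction). But your treatment of the three purely monoidal Kelly-peaks has a genuine gap at the one step where Kelly's argument is subtle. You propose to ``embed the peak inside a pentagon one of whose four factors is the unit, and fill the three residual faces with a triangle $4$-cell and the naturality $4$-cells supplied by the foldable peaks.'' This fails twice over. First, the foldable $\comm$ cells are unavailable here: every simply- or strongly-foldable peak involves the braiding gate, whereas the three classical Kelly-peaks live in the pure $\{m,e\}$ fragment; the squares you would call naturality are strict interchange equalities of the PRO, which carry no content. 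Second, and more seriously, a pentagon instance with a unit inserted, closed off by triangles and interchange, does not have the peak's confluence diagram as a face: it only yields the \emph{whiskered} diagram, i.e.\ the solution of the peak placed under a unit context $\e\pro(-)$ (or $(-)\pro\e$). In the classical proof this is where Kelly cancels the functor $\e\pro -$ using that $\lambda$ is a natural isomorphism, and that cancellation does not come for free in a $(4,3)$-polygraph.

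The paper handles exactly this point, and it is the technical heart of the appendix: it instantiates the pentagon with \emph{two} units (the cells $\twocell{(e *0 e*0 2)*1 penta}$ and $\twocell{(2 *0 e *0 e)*1 penta}$, matching Kelly's pentagons on $(\e,\e,x,y)$ and $(x,y,\e,\e)$), and then performs the descent by \emph{implicitly defining} the wanted $4$-cell: an auxiliary sphere (the cells $T$, $T'$) is paved so that all of its faces but one are known---interchange equalities ``$=$'' that slide the $l$/$r$ $3$-cells past the rest of the diagram, thereby stripping the unit context, together with an already-defined Kelly cell such as the one for $\twocell{(2 *0 e)*1 (m *0 1) *1 m}$---and the remaining face, which is the peak's solution, is then defined as the composite of the others (legitimate because $4$-cells in $\bar\Sim$ are unoriented). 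Without some version of this whisker-removal mechanism your paving stops at $\e\pro D$ instead of $D$, so the first three Kelly-peaks, and hence the weak-Kelly-peaks that bootstrap from them, are not actually resolved by the plan as stated. Your sketch for the braiding-flavoured peaks (pasting the $\gamma$-coherence cell $\twocell{g}$ with $\comm$ cells and the unit Kelly cells) is essentially what the appendix does, modulo which primitive cell appears where, and your induction for the twelve weak-Kelly-peaks is sound once the descent step is supplied.
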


\begin{cor}[Cellular version of Kelly's lemma]\label{Kelly}
The $4$-cells \twocell{penta} and \twocell{tria} suffice to define a $4$-cell for every critical peak of $\M$.
\end{cor}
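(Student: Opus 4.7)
\begin{proof}
The plan is to recall from the proof of Corollary~\ref{monconv} that $\M$ has exactly five critical peaks: the associativity peak, whose $4$-cell is $\twocell{penta}$, the central unit peak $\twocell{((1 *0 e *0 1)*1 (m *0 1)) *1 m}$, whose $4$-cell is $\twocell{tria}$, and the three remaining peaks with sources
\[
\twocell{((e *0 2)*1 (m *0 1)) *1 m}, \qquad
\twocell{((2 *0 e)*1 (m *0 1)) *1 m}, \qquad
\twocell{(e *0 e)*1 m}.
\]
These three are exactly the Kelly-peaks of $\bar \Sim$ whose source is $\twocell{s}$-free, and they correspond one-to-one with the three diagrams of Kelly's classical monoidal lemma recalled in the excerpt. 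The task is therefore to show that the $4$-cells produced for them by Proposition~\ref{kellysim} are composed from $\twocell{penta}$ and $\twocell{tria}$ alone.

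For each such peak I would import the confluence diagram supplied by Proposition~\ref{kellysim} and inspect its pasting decomposition. The key observation is that every generating $4$-cell of $\bar \Sim$ other than $\twocell{penta}$ and $\twocell{tria}$---namely $\twocell{inv}$, $\twocell{g}$, $\twocell{exa2}$, and each $\comm$-cell over a simply-foldable or strongly-foldable peak---has a source diagram containing at least one occurrence of the crossing gate $\twocell{s}$. Moreover, inspecting Definition~\ref{SimDef} shows that every rule of $\Sim$ other than the three of~(\ref{monr}) requires at least one $\twocell{s}$ on its left-hand side, so no $\twocell{s}$-gate can ever be produced from a $\twocell{s}$-free source. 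Hence every intermediate $2$-cell appearing in the confluence diagram of one of the three $\twocell{s}$-free peaks is itself $\twocell{s}$-free, no $4$-cell from the forbidden list can paste in, and by elimination only $\twocell{penta}$ and $\twocell{tria}$ do.

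Having established this, the corollary follows by applying Lemma~\ref{pav} to $\M$---convergent by Corollary~\ref{monconv}---so that, once every critical peak of $\M$ admits a $4$-cell built from $\twocell{penta}$ and $\twocell{tria}$, a composed $4$-cell exists for every pair of parallel $3$-cells of $\bar \M$, in particular for every critical peak. The main obstacle will be the syntactic audit of Appendix~\ref{CellyKelly}: one must check that the three decompositions proposed there for the $\twocell{s}$-free Kelly-peaks genuinely reproduce Kelly's original pentagon-and-triangle diagram chase, i.e.\ that the sequence in which $\twocell{penta}$ and $\twocell{tria}$ are pasted along naturality of $\twocell{m}$ yields a well-typed $4$-cell with the correct border.
\end{proof}
\end{cor}
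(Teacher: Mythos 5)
There is a genuine gap at the decisive step, the ``by elimination'' argument. Your two supporting observations are correct: the five critical peaks of $\M$ are as you list them, every rule of $\Sim$ outside (\ref{monr}) has an occurrence of \twocell{s} in its left-hand side, so every \emph{reduct} of an \twocell{s}-free $2$-cell is \twocell{s}-free, and every generating $4$-cell of $\bar\Sim$ other than \twocell{penta} and \twocell{tria} has \twocell{s} in its source. But this only constrains the $2$-cells of the confluence diagram itself (the reducts of the peak source), not the $2$-cells through which a pasting decomposition of a filling $4$-cell may pass. The paper's own constructions in Appendix \ref{CellyKelly} illustrate exactly this: the $4$-cell for the peak \twocell{(e *0 2) *1 (m *0 1)*1 m} is obtained by embedding it into the pentagon instance with source \twocell{(e *0 e*0 2)*1 (( m *0 2) *1 m *0 1) *1 m}, a strictly larger diagram that is \emph{not} a reduct of the peak source; it rewrites \emph{down onto} the cells of the peak rather than being reachable from them. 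Forward preservation of \twocell{s}-freeness under rewriting says nothing about such auxiliary cells, and a priori nothing rules out a filling that routes through \twocell{s}-bearing $2$-cells which then collapse via \twocell{tau} or \twocell{s *1 s} reducing to the identity, in which case \twocell{inv} or a $\comm$-cell could legitimately paste in. So your elimination does not close; the conclusion can only come from inspecting the actual decompositions, which is precisely what your last sentence postpones as ``the main obstacle''---but that audit \emph{is} the proof.

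For comparison, the paper's proof of Corollary \ref{Kelly} is exactly that inspection, via Proposition \ref{kellysim}: the decompositions in Appendix \ref{CellyKelly} for the three \twocell{s}-free peaks use only instances of \twocell{penta}, \twocell{tria}, interchange/naturality equalities (the cells marked $=$), and previously constructed Kelly $4$-cells (the cells $\doublebox{T}$, $\doublebox{T'}$ and $\doublebox{\twocell{(m *0 e) *1 m}}$), which in turn unfold to \twocell{penta} and \twocell{tria} alone; the $\comm$-cells and \twocell{g}, \twocell{inv}, \twocell{exa2} occur only in the decompositions of the \twocell{s}-bearing (weak-)Kelly peaks. A further, minor point: your appeal to Lemma \ref{pav} is superfluous here and reads circularly---the corollary asserts fillers for the five critical peaks themselves, which is immediate once the appendix constructions are verified; Lemma \ref{pav} is what the paper invokes \emph{afterwards}, in the proof of Theorem \ref{cohemc}, to extend from critical peaks to arbitrary parallel $3$-cells.
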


%
%
%
%
%

We are now able to prove the coherence theorems.

\begin{proof}[Proof of Theorem \ref{cohemc}]
In order to prove the commutativity of all confluence diagrams in a monoidal category theory, we use the correspondence between cells of $\bar \M$ and functors, natural transformations and coherence conditions of this theory.

We have a one-to-one correspondence between the generating $2$-cells $\twocell m$ and $\twocell e$ and the functors $\alpha$ and $\e$ which allows us to establish a one-to-one correspondence between  the $2$-cells in $\bar \M$ and  the objects of a monoidal category.

Moreover, there two more one-to-one correspondences: one between $3$-cells $\twocell {alpha}, \twocell {r}$ and $\twocell {l}$ and the (families of) natural transformations $\alpha, \rho, \lambda$ and another between the $4$-cells $\twocell{penta}$ and $\twocell {tria}$ and the coherence conditions.

By this fact, whenever we have a diagram in a symmetric monoidal category made of $\alpha, \rho$ and $ \lambda$, we can associate it a unique confluence diagram in $\M$ obtained by replacing the objects with the corresponding $2$-cells, and the arrows with the corresponding $3$-cells.

Since $\M$ is confluent (Corollary \ref{monconv}),  Lemma \ref{pav} assures the possibility to define a composed $4$-cell in $\bar \M$ using only the $4$-cells associated to its critical peaks solutions. To conclude, we remind that the cellular version of Kelly's lemma (Corollary \ref{Kelly}) asserts that the $4$-cells $\twocell{penta}$ and $\twocell {tria}$ suffice for this scope.
\end{proof}

\begin{proof}[Proof of Theorem \ref{cohesmc}]\label{cohesmcproof}
As in the previous proof, we wish to establish a correspondence between the cells in $\bar \Sim$ and functors, natural transformations and coherence conditions in a symmetric monoidal category.

We immediately observe an incongruence between these two syntaxes. This is due to the fact that the $2$-cell \twocell{s} does not correspond to any functor in the symmetric monoidal category theory. Its introduction is due to the fact that, in general, in order to represent an \emph{algebraic theory} through diagrams \cite{LafEq}, we need to define some operators to manage resources: \emph{duplication}, \emph{erasing} and \emph{permutation} (in this particular case, the fact that we only consider linear diagrams, rules out the necessity of duplication nor erasing).

We give a \emph{standard interpretation} of \twocell{s}, in the sense that we consider an equality between the pairs $(x,y)$ and $(y,x)$. This leads to the identification of  some $2$-cells interpretation which corresponds to the same term and, consequently, the trivial interpretation (by an identity) of any $3$-cells of a structural rule (Definition \ref{SimDef}.\ref{manr}). 

Moreover, this identification induces a trivial interpretation of the $4$-cells in $\bar\Sim$ associate to the foldable peaks. In fact, once we respectively interpret the $2$-cells and $3$-cells as object and arrows in the symmetric monoidal category theory, the interpretation of these $4$-cells collapses to a trivial diagram in the theory, composed of an arrow (simply-foldable peaks) or a single object (strongly-foldable peaks). In fact, this behavior justifies the choice made in Proposition \ref{kellysim} of a unique notation $\comm$ for all these $4$-cells.

However, these two syntaxes show a one-to-one correspondence between the (families of) natural transformations $\alpha, \rho, \lambda, \tau$ and $\gamma$ and the non-structural rewriting rules $\begin{Bmatrix}\twocell {alpha}, \twocell {r}, \twocell {l}, \twocell {tau},\twocell {gamma}\end{Bmatrix}$, as well as a one-to-one correspondence between coherence conditions and the set of  $4$-cells $\begin{Bmatrix}\twocell{penta}, \twocell{tria}, \twocell{g}, \twocell{inv}, \twocell{exa2}\end{Bmatrix}$.

In order to prove the coherence, we build a confluence digram $D_\Sim$ in $\Sim$ associate to a diagram $D$ in a symmetric monoidal category the following way:
\begin{itemize}
\item We fix an order over the variable occurring in the terminal object of $D$;
\item For each object $o$ in $D$, we consider a $2$-cell $\phi_{o}$ which can be interpreted as $o$ whenever the input strings labels is the one fixed in the first step. Moreover we chose the unique (after Corollary \ref{structconf}) $2$-cell $\phi_o$ such that no structural rewriting rule can be applied to it. We call such $2$-cells \emph{primary};
\item For each morphism $o\fr_x o'$ in $D$ we add to $D_\Sim$ the corresponding non-structural \emph{primary} $3$-cell $\rew {\phi_o} {\phi_{o'}}$ between the two related $2$-cells. Since the source is primary, also its target is; then this $2$-cell already is in $D_\Sim$. If the source of such $3$-cell is not in $D_\Sim$, we add the related $2$-cell to $D_\Sim$ -- this new $2$-cell is not primary but its interpretation is an object of the diagram $D$;
\item For each of this new $2$-cell $\phi_{o}'$, we add to $D_\Sim$ the $2$-cells and the $3$-cells of the rewriting path from $\phi_o'$ to the natural $2$-cell $\phi_o$ which has the same interpretation.
\end{itemize}

This new diagram $D_\Sim$ can be decomposed by means of confluence diagrams in $\Sim$ with some $2$ and/or $3$-cells in common.  Indeed, it suffices to solve its conflicts, which are the pair of $3$-cells with a common source. If the source of the conflict is a primary $2$-cell, then conflict is between two primary $3$-cells. If the source is not primary, then the conflict is between the primary $3$-cell which introduced this $2$-cell and the rewriting path from this latter to its corresponding primary $2$-cell.

The confluence of $\Sim$ (Propositions \ref{simconv}) guarantees the decomposition of $D_\Sim$ by means of confluence diagrams which, by Lemma \ref{pav}, in turn can be decomposed by using the $4$-cells in $\bar \Sim$ associated to its critical pairs. Moreover, Proposition \ref{kellysim} guarantees that we only need the $4$-cells $\begin{Bmatrix}\twocell{penta}, \twocell{tria}, \twocell{g}, \twocell{inv}, \twocell{exa2}\end{Bmatrix}$ of the coherence peaks together with the $4$-cells of the foldable peaks (labeled by $\comm$).

We conclude interpreting the $2$,$3$ and $4$-cells in the given $D_\Sim$ decomposition as respectively objects, morphisms and coherence conditions.
\end{proof}

By means of example, let us construct the decomposition of the hexagonal identity in Definition \ref{SMC} by the two coherence conditions given to define the natural transformation $\gamma$ in Definition \ref{SMC2}: in the first stage we have the diagram with only the primary $2$-cells, in the second stage we add the $3$-cells, the non-primary $2$-cells and the relative rewriting path and in the third stage we have the decomposition of the diagram by means of $4$-cells.
\begin{center}
\resizebox{.5\width}{!}{
$$
\xymatrix{
&   \twocell{(1*0 s) *1 ( m *0 1) *1 m} \ar@{.>}[r]^{\alpha}&
\twocell{(1*0 s) *1 (1 *0 m)*1 m} \ar@{.>}[dr]^{\id \pro \tau}\\
\twocell{(1 *0 s) *1 (s *0 1) *1  (m *0 1) *1 m} \ar@{.>}[dr]_{\alpha}\ar@{.>}[ur]^{\tau \pro \id}& 
 && 
\twocell{(1 *0 m)*1 m}\\
& 
\twocell{(1 *0 s) *1 (s *0 1) *1  (1 *0 m) *1 m} \ar@{.>}[r]_{\tau}& 
\twocell{( m *0 1) *1 m} \ar@{.>}[ur]_{\alpha}
}
\quad
\xymatrix{
&   \twocell{(1*0 s) *1 ( m *0 1) *1 m} \ar@3[rr]^{\twocell{(1*0 s) *1 alpha}}&& 
\twocell{(1*0 s) *1 (1 *0 m)*1 m}\ar@3[dr]^{\twocell{(1*0 tau) *1 m}} \\
\twocell{(1 *0 s) *1 (s *0 1) *1  (m *0 1) *1 m} \ar@3[dr]_{\twocell{(1 *0 s) *1 (s *0 1) *1  alpha}} \ar@3[ur]^{\twocell{(1*0 s) *1 (tau *0 1) *1 m}} & 
 &\twocell{(m*0 1) *1 s *1  m}\ar@3@[lightgray][dl]\ar@3[dr]_{\twocell{(m *0 1) *1 tau}} &  &
\twocell{(1 *0 m)*1 m}\\
& 
\twocell{(1 *0 s) *1 (s *0 1) *1  (1 *0 m) *1 m} & & 
\twocell{( m *0 1) *1 m} \ar@3[ru]_{\twocell{alpha}}
}
\quad
\xymatrix{
&   \twocell{(1*0 s) *1 ( m *0 1) *1 m} \ar@3[rr]^{\twocell{(1*0 s) *1 alpha}}&& 
\twocell{(1*0 s) *1 (1 *0 m)*1 m}\ar@3[dr]^{\twocell{(1*0 tau) *1 m}} \\
\twocell{(1 *0 s) *1 (s *0 1) *1  (m *0 1) *1 m} \ar@3[dr]_{\twocell{(1 *0 s) *1 (s *0 1) *1  alpha}} \ar@3[ur]^{\twocell{(1*0 s) *1 (tau *0 1) *1 m}} & 
\twocell{(1 *0 s) *1 g} && \twocell{exa2} &
\twocell{(1 *0 m)*1 m}\\
& 
\twocell{(1 *0 s) *1 (s *0 1) *1  (1 *0 m) *1 m} \ar@3[uurr]|-{\twocell {(1*0 s)*1 gamma}}& \twocell{(m*0 1) *1 s *1  m}\ar@3[l]\ar@3[r]_{\twocell{(m *0 1) *1 tau}}& 
\twocell{( m *0 1) *1 m} \ar@3[ru]_{\twocell{alpha}}
}
$$
}
\end{center}
This allows us to pass from the diagram corresponding to the hexagonal identity to its decomposition by means of coherence conditions of Definition \ref{SMC2}:

\resizebox{.75\width}{!}{
\begin{tikzpicture}[node distance=1.3 cm, auto]
  \node (0) {$(x \pro y)\pro z$};
  \node (1) [right of=0, below of =0] {$x\pro (y \pro z)$};
\node(c)[right of=1, above of =1] {};
  \node (2) [below of=c, right of =c] {$(y \pro z) \pro x$};
  \node(3)[right of=2, above of =2]{$y\pro (z\pro x)$};
  \node (4) [right of=0, above of =0] {$(y \pro x) \pro z$};
  \node (5) [above of=c, right of =c] {$y \pro (x \pro z)$};

\draw [->] (0) to node[swap]{$\alpha$} (1);
\draw [->] (1) to node[swap]{$\sym$} (2);
\draw [->] (2) to node[swap] {$\alpha$}(3);

\draw [->] (0) to node{$\sym \pro \id$} (4);
\draw[->](4) to node {$\alpha$}(5);
\draw[->](5) to node {$\id \pro \sym$}(3);
\end{tikzpicture}
\qquad
\begin{tikzpicture}[node distance=1.3 cm, auto]
  \node (0) {$(x \pro y)\pro z$};
  \node (1) [right of=0, below of =0] {$x\pro (y \pro z)$};
\node(c)[right of=1, above of =1] {};
  \node (2) [below of=c, right of =c] {$(y \pro z) \pro x$};
  \node(3)[right of=2, above of =2]{$y\pro (z\pro x)$};
  \node (4) [right of=0, above of =0] {$(y \pro x) \pro z$};
  \node (5) [above of=c, right of =c] {$y \pro (x \pro z)$};

\node(c1)[right of=c, node distance =.8cm]{$\circlearrowleft$};
\node(c2)[left of=c, node distance =.8cm]{$\circlearrowleft$};

\draw [->] (0) to node[swap]{$\alpha$} (1);
\draw [->] (1) to node[swap]{$\sym$} (2);
\draw [->] (2) to node[swap] {$\alpha$}(3);

\draw [->] (0) to node{$\sym \pro \id$} (4);
\draw[->](4) to node {$\alpha$}(5);
\draw[->](5) to node {$\id \pro \sym$}(3);

\draw [->] (1) to node[swap] {$\gamma$}(5);
\end{tikzpicture}

}

\begin{cor}[Kelly's lemma for SMC]
The five coherence conditions given in Definitions \ref{MC} and \ref{SMC2} suffice to prove coherence for all critical branching between natural transformations in any symmetric monoidal category.
\begin{proof}
The proof follows from the Proposition \ref{kellysim} and the interpretation of the $4$-cells according to the construction given in Proof \ref{cohesmcproof}.
\end{proof}
\end{cor}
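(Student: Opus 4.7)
The plan is to leverage the correspondence, established in Proof \ref{cohesmcproof}, between the cells of $\bar\Sim$ and the categorical data of a symmetric monoidal category, and then to count which $4$-cells yield genuinely new conditions on the target category. First I would observe that under that interpretation the five $4$-cells $\twocell{penta}$, $\twocell{tria}$, $\twocell{g}$, $\twocell{inv}$, $\twocell{exa2}$ correspond exactly to the pentagonal identity, the triangular identity, the two hexagonal-style identities defining $\gamma$, and the involutivity of $\sym$, i.e.\ to the five coherence conditions listed in Definitions \ref{MC} and \ref{SMC2}.

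Next I would invoke the partition of the $68$ critical peaks of $\Sim$ given in Theorem \ref{simconv}. The $5$ coherence peaks are bordered exactly by those five distinguished $4$-cells. For the $5$ Kelly-peaks and the $12$ weak-Kelly-peaks, Proposition \ref{kellysim} already expresses the required $4$-cells as composites built from the five coherence $4$-cells together with foldable ones marked $\comm$, so they impose no further condition. For the $18$ simply-foldable and $28$ strongly-foldable peaks I would appeal to the standard interpretation of $\twocell{s}$ as a permutation, under which every structural $3$-cell is an identity arrow: a strongly-foldable $4$-cell then has both of its bordering $3$-cells interpreted as identities on a single object, and a simply-foldable $4$-cell has its two parallel $3$-cells interpreted as a common arrow surrounded by identities. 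In each case the induced diagram in the symmetric monoidal category is trivial, so no additional coherence condition is needed.

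Combining these observations with Lemma \ref{pav}, any critical branching between natural transformations in a symmetric monoidal category lifts, via the construction of Proof \ref{cohesmcproof}, to a decomposition in $\bar\Sim$ whose $4$-cells are either one of the five distinguished types or foldable and hence trivially interpreted; the five conditions therefore suffice. The main obstacle, and the real content of the argument, is justifying that the standard interpretation is well-defined on $2$- and $3$-cells, so that primary $2$-cells denoting the same SMC term really are identified and each structural rewriting step descends to an identity arrow; without this the collapse of the foldable $4$-cells would fail and the count of needed coherence conditions would inflate.
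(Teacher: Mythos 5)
Your proposal is correct and takes essentially the same route as the paper: its one-line proof is precisely the combination of Proposition \ref{kellysim} with the interpretation of $4$-cells given in the proof of Theorem \ref{cohesmc}, which you have merely unpacked (the five coherence $4$-cells matching the five conditions of Definitions \ref{MC} and \ref{SMC2}, Kelly- and weak-Kelly-peaks handled by Proposition \ref{kellysim}, foldable peaks collapsing to trivial diagrams under the standard interpretation, and Lemma \ref{pav} assembling the decomposition). The well-definedness of the standard interpretation that you flag as the remaining obstacle is already discharged by the paper's earlier remark that structural rules do not change a diagram's interpretation, together with Corollary \ref{structconf} singling out the primary $2$-cells, so no genuine gap remains.
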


\section{Conclusions}
In this paper we have presented the detailed proof of confluence of Lafon's rewriting systems for symmetric monoidal categories proposed in \cite{LafBool} completing it by exhibiting the confluence diagrams of the critical peaks of the rewriting system  $\Sim$. This allows us to apply some tools of polygraph theory (\cite{GuirMalHD}, \cite{GuirMalCohe}, \cite{GuirMalHom} and \cite{maxime}) to adapt this proof into a proof of coherence for symmetric monoidal categories. In fact, we are able to give a new constructive procedure to prove the coherence theorem in the setting of a $(4,3)$-polygraph. Differently from the proofs  given in \cite{GuirMalCohe} and  \cite{maxime}, where a $(4,2)$-polygraph setting is required in order to consider the structural rules in Definition \ref{SimDef}.\ref{manr} as equalities, in our proof we can keep structural rules oriented even if their interpretations are identities.

Moreover, in this paper we work with an alternative presentation of the symmetric monoidal category theory. This is given replacing the hexagonal identity by two coherence conditions induced by the decomposition of this diagram by means of the so-called \emph{parallel associativity}.
Thus, we need to prove an analogous of Kelly's Theorem \cite{Kelly} in order to show that the five initial coherence conditions suffice to prove the coherence for all confluence diagrams of critical branching between natural transformations. In fact, in monoidal category theory Kelly shows the sufficiency of the pentagonal and triangular identity to prove the coherence, while the original  Mac Lane's \cite{Mac} definition gives five coherence conditions corresponding to all critical branchings between natural transformations.

For this purpose, in Appendix \ref{appConf} we show the confluence diagrams of the critical pairs of Lafont's rewriting system $\Sim$ completing the proof given in \cite{LafBool}. These confluence diagrams can be interpreted as  confluence diagrams made of morphisms of symmetric monoidal category theory.

Furthermore, in Appendix \ref{CellyKelly} we give some possible decompositions of the critical peaks of $\Sim$ which have non-trivial interpretation in the theory. These decompositions are made by means of some confluence diagrams whose interpretations correspond to the coherence conditions of our symmetric monoidal category theory presentation.


%
%


\appendix


\newpage

\section{Critical peaks confluence diagrams}\label{appConf}

\begin{itemize}
\item The $5$ \emph{coherence} peaks

\resizebox{.5\width }{!}{
$$ \xymatrix {
{\twocell{(( m *0 2) *1 m *0 1) *1 m} } \ar@3_{\twocell{alpha}}[d]  \ar@3^{\twocell{alpha}}[r] & 
{\twocell{(( 1 *0 m *0 1) *1 m *0 1) *1 m}} \ar@3^{\twocell{alpha}}[r]&
{\twocell{((1 *0 m *0 1) *1 1 *0 m) *1 m}} \ar@3^{\twocell{alpha}}[dl] \\
{\twocell{( m *0 m)  *1 m }}\ar@3_{\twocell{alpha}}[r]&
{\twocell{(( 2 *0 m ) *1 (1 *0 m)) *1 m}}
}
$$
$$
\xymatrix{
{\twocell{((1 *0 e *0  1)*1 (m *0 1)) *1 m}} \ar@3^{\twocell{r}}[r] \ar@3_{\twocell{alpha}}[d]& {\twocell{m}} \\
{\twocell{((1 *0 e *0  1)*1 (1 *0 m))*1 m}} \ar@3[ur]
}
$$
$$
\xymatrix{
\twocell{s *1 s *1 m} \ar@3[r] \ar@3_{\twocell{tau}}[d]& {\twocell{m}} \\
\twocell{s *1 m} \ar@3_{\twocell{tau}}[ur]
}$$
$$
\xymatrix{
\twocell{(s *0 1) *1  (m *0 1) *1 m} \ar@3^{\twocell{tau}}[r] \ar@3_{\twocell{alpha}}[d]&
\twocell{( m *0 1) *1 m} \ar@3[d]^{\twocell{alpha}}\\
\twocell{(s *0 1) *1 ( 1 *0 m) *1 m}\ar@3_{\twocell{gamma}}[r] &\twocell{(1 *0 m)*1 m}
}$$
$$
\xymatrix{
 \twocell{(m*0 1) *1 s *1 m}\ar@3[r]^{\twocell{tau}}\ar@3[d]&
\twocell{( m *0 1) *1 m} \ar@3^{\twocell{alpha}}[r]& 
\twocell{(1 *0 m)*1 m}\\
\twocell{(1 *0 s) *1 (s *0 1) *1  (1 *0 m) *1 m} \ar@3[r]_{\twocell{gamma}}& 
\twocell{(1*0 s) *1 (1 *0 m)*1 m}\ar@3_{\twocell{tau}}[ru] 
}$$
}

\item The $5$ \emph{Kelly}-peaks:

\resizebox{.5\width}{!}{
$$
\xymatrix{
{\twocell{((e *0  2)*1 (m *0 1)) *1 m}} \ar@3^{\twocell{l}}[r] \ar@3[d]_{\twocell{alpha}}& {\twocell{m}} \\
{\twocell{((e *0  2)*1 (1 *0 m))*1 m}} \ar@3_{\twocell{l}}[ur]
}$$
$$
\xymatrix{
{\twocell{((2 *0 e)*1 (m *0 1)) *1 m}} \ar@3^{\twocell{r}}[r] \ar@3_{\twocell{alpha}}[d]&{\twocell{m}} \\
{\twocell{((2 *0  e)*1 (1 *0 m))*1 m}} \ar@3_{\twocell{r}}[ur]
}$$
$$
\xymatrix{
\twocell{(e *0  e)*1 m} \ar@3@/_/_{\twocell{l}}[r] \ar@3@/^/^{\twocell{r}}[r] & {\twocell{e}}
}$$
$$
\xymatrix{
\twocell{(s *0 2) *1 (1 *0 m *0 1) *1 (m *0 1) *1 m} \ar@3^{\twocell{gamma}}[r]\ar@3_{\twocell{alpha}}[d]&
\twocell{(1 *0 m *0 1) *1 (m *0 1) *1 m} \ar@3^{\twocell{alpha}}[r]& 
\twocell{(1 *0 m *0 1) *1 (1 *0 m) *1 m}  \ar@3^{\twocell{alpha}}[d] \\
\twocell{(s *0 2) *1 (1 *0 m *0 1) *1 (1 *0 m) *1 m}   \ar@3_{\twocell{alpha}}[r] & 
\twocell{ (s *0 m ) *1 (1 *0 m) *1 m}\ar@3_{\twocell{gamma}}[r] &
\twocell{(2 *0 m) *1 (1 *0 m) *1 m} 
}$$
$$
\xymatrix{
{\twocell{(s *0 e)*1 (1 *0 m) *1 m}} \ar@3^{\twocell{gamma}}[r] \ar@3_{\twocell{r}}[d]& \ar@3^{\twocell{r}}[d]{\twocell{(2 *0 e) *1 (1 *0 m) *1 m}}\\
 \twocell{s *1 m} \ar@3_{\twocell{tau}}[r] & \twocell{m}
}
$$
}

\item The $12$  \emph{weak-Kelly}-peaks:

\resizebox{.5\width}{!}{
$$
\xymatrix{
{\twocell{(1 *0 e)*1 (s) *1 m}} \ar@3^{\twocell{tau}}[r] \ar@3[d]& \twocell{(1 *0 e) *1 m} \ar@3^{\twocell{r}}[d]\\
{\twocell{(e *0 1) *1 m}} \ar@3_{\twocell{l}}[r] & \twocell{1}
}$$
$$
\xymatrix{
{\twocell{(e *0 1)*1 (s) *1 m}} \ar@3^{\twocell{tau}}[r] \ar@3[d]& \twocell{(e *0 1) *1 m} \ar@3^{\twocell{l}}[d]\\
{\twocell{(1 *0 e) *1 m}} \ar@3_{\twocell{r}}[r] & \twocell{1}
}$$

$$
\xymatrix{
{\twocell{(e *0 2) *1 (s *0 1)*1 (1 *0 m) *1 m}} \ar@3^{\twocell{gamma}}[r] \ar@3[d]& {\twocell{(e *0  m) *1 m}} \ar@3^{\twocell{l}}[d]\\
{\twocell{(1*0 e *0 1) *1 (1 *0 m) *1 m}} \ar@3_{\twocell{l}}[r] & \twocell{m}
}$$
$$
\xymatrix{
{\twocell{(1*0 e *0 1)*1 (s *0 1)*1 (1 *0 m) *1 m}} \ar@3^{\twocell{gamma}}[r] \ar@3[d]& {\twocell{(1*0 e *0 1) *1 (1 *0 m) *1 m}} \ar@3^{\twocell{l}}[d]\\
 {\twocell{(e *0  m) *1 m}} \ar@3_{\twocell{l}}[r] & \twocell{m}
}\qquad
$$

$$\xymatrix{
\twocell{(s *0 1) *1 (s *0 1) *1 (1 *0 m) *1 m}  \ar@3[r] \ar@3[d]_{\twocell{gamma}} &  \twocell{(1*0 m) *1 m}\\
\twocell{(s *0 1) *1 (1 *0 m) *1 m} \ar@3[ur]_{\twocell{gamma}} 
}$$
}

\resizebox{.5\width}{!}{
$$
 \xymatrix{
\twocell{(s *0 1) *1 (1 *0 m) *1 s *1 m}  \ar@3[r]^{\twocell{tau}} \ar@3[d] &  
\twocell{(s *0 1) *1 (1 *0 m) *1 m} \ar@3[r]^{\twocell{gamma}}&
\twocell{(1*0 m) *1 m}\\
\twocell{(1 *0 s) *1 (m *0 1) *1 m} \ar@3[r]_{\twocell{alpha}}&
\twocell{(1 *0 s) *1 (1 *0 m) *1 m} \ar@3[ur]_{\twocell{tau}}& 
}
$$

$$
\xymatrix{
\twocell{(m *0 2) *1 (s *0 1) *1 (1 *0 m) *1 m } \ar@3[r] \ar@3[d]^{\twocell{gamma}} &
\twocell{(1 *0 s *0 1) *1 (s *0 2) *1 (1 *0 m *0 1) *1 (1 *0 m) *1 m}  \ar@3[r]^{\twocell{alpha}} &
\twocell{(1 *0 s *0 1) *1 (s *0 m) *1 (1 *0 m) *1 m} \ar@3[d] _{\twocell{gamma}} \\
\twocell{(m *0 m ) *1 m} \ar@3[r] _{\twocell{alpha}} &
\twocell{(2 *0 m) *1 (1 *0 m) *1 m}  &
\twocell{(1 *0 s *0 1) *1 (2 *0 m) *1 (1 *0 m) *1 m} \ar@3[l]^{\twocell{gamma}}
}
$$
$$
\xymatrix{
\twocell{(s *0 2) *1 (1 *0 m *0 1) *1 (s *0 1) *1 (1 *0 m) *1 m}\ar@3[d]\ar@3[r]^{\twocell{gamma}}&
\twocell{(s *0 2) *1 (1 *0 m *0 1) *1 (1 *0 m) *1 m} \ar@3[r]^{\twocell{alpha}} & 
\twocell{(s *0 m) *1 (1 *0 m) *1 m}  \ar@3[d]_{\twocell{gamma}} \\
\twocell{(1 *0 s *0 1) *1 (m *0 m) *1 m}   \ar@3[r] _{\twocell{alpha}} & 
\twocell{ (s *0 m ) *1 (1 *0 m) *1 m}\ar@3[r]_{\twocell{gamma}}&
\twocell{(2 *0 m) *1 (1 *0 m) *1 m} 
}
$$

$$
\xymatrix{
\twocell{(s *0 1) *1 (1*0 s) *1 (s *0 1) *1 (1 *0 m) *1 m}\ar@3[d]\ar@3[r]^{\twocell{gamma}}  &
\twocell{(s *0 1) *1 (1*0 s) *1 (1 *0 m) *1 m} \ar@3[r]^{\twocell{tau}}&
\twocell{(s *0 1) *1  (1 *0 m) *1 m} \ar@3[dr]^{\twocell{gamma}}\\
\twocell{(1 *0 s) *1 (s *0 1) *1 (1*0 s) *1 (1 *0 m) *1 m} \ar@3[r]_{\twocell{tau}}&
\twocell{(1 *0 s) *1 (s *0 1) *1 (1 *0 m) *1 m} \ar@3[r]_{\twocell{gamma}}&
\twocell{(1 *0 s) *1 (1 *0 m) *1 m} \ar@3[r]_{\twocell{tau}}&
\twocell{ (1 *0 m) *1 m}
}$$
}

\resizebox{.5\width}{!}{
$$
\xymatrix{
\twocell{(s *0 2) *1 (1*0 s*0 1) *1 (s *0 m) *1 (1 *0 m) *1 m}\ar@3[d]\ar@3[r]^{\twocell{gamma}} &
\twocell{(s *0 2) *1 (1*0 s*0 1) *1 (2 *0 m) *1 (1 *0 m) *1 m} \ar@3[r]^{\twocell{gamma}} & 
\twocell{(s *0 2) *1 (2 *0 m) *1 (1 *0 m) *1 m}   \ar@3[dr]^{\twocell{gamma}}\\ 
\twocell{(1 *0 s *0 1) *1 (s *0 2) *1 (1*0 s*0 1) *1 ( 2 *0 m )*1 (1 *0 m) *1 m} \ar@3[r]_{\twocell{gamma}} &
\twocell{(1 *0 s *0 1) *1 (s *0 2) *1 ( 2 *0 m )*1 (1 *0 m) *1 m} \ar@3[r]_{\twocell{gamma}}&
\twocell{(1 *0 s *0 1) *1 ( 2 *0 m )*1 (1 *0 m) *1 m}\ar@3[r]_{\twocell{gamma}}& 
\twocell{(2 *0 m) *1 (1 *0 m) *1 m}
}$$

$$
\xymatrix{
\twocell{(s *0 1) *1 (1 *0 s) *1 (m *0 1) *1 m}\ar@3[d]_{\twocell{alpha}}\ar@3[r] & 
\twocell{(1 *0 m) *1 (s) *1 m}\ar@3[dr]_{\twocell{tau}}\\
\twocell{(s *0 1) *1 (1 *0 s) *1 (1 *0 m) *1 m}\ar@3[r]_{\twocell{tau}}&
\twocell{(s *0 1) *1 (1 *0 m) *1 m} \ar@3[r]_{\twocell{gamma}}&
\twocell{(1 *0 m) *1 m}
}$$
$$
\xymatrix{
\twocell{(s *0 2) *1 (1 *0 s *0 1) *1 (m *0 m) *1 m} \ar@3[r]^{\twocell{alpha}}\ar@3[d]& 
\twocell{(s *0 2) *1 (1 *0 s *0 1) *1 (2 *0 m) *1 (1 *0 m) *1 m} \ar@3[r]^{\twocell{gamma}}&  
\twocell{(s *0 m) *1 (1 *0 m) *1 m} \ar@3[d]^{\twocell{gamma}}\\
\twocell{(1*0 m *0 1) *1 (s *0 1) *1 (1 *0 m) *1 m} \ar@3[r]_{\twocell{gamma}} & 
\twocell{(1 *0 m *0 1) *1 (1 *0 m) *1 m} \ar@3[r]_{\twocell{alpha}} & 
\twocell{(2 *0 m) *1 (1 *0 m) *1 m}
}$$
}

\item The $18$ \emph{simply-foldable} peaks: 

\resizebox{.5\width}{!}{
$$ \xymatrix{
\twocell{(s*0 1) *1 (m *0 1) *1 s}  \ar@3[r] \ar@3[dr]_{\twocell{tau}} & \twocell{( s *0 1) *1 (1 *0 s) *1 (s *0 1) *1 (1 *0 m)}\ar@3[r] & \twocell{( 1 *0 s) *1 (s *0 1) *1 (1 *0 s) *1 (1 *0 m)}\ar@3[d]^{\twocell{tau}}\\
& \twocell{(m*0 1) *1 s} \ar@3[r] & \twocell{(1 *0 s) *1( s *0 1) *1 (1*0 m)}
}$$
$$ 
\xymatrix{
\twocell{(m*0 2) *1 (m *0 1) *1 s} \ar@3[r]\ar@3[d]_{\twocell{alpha}~}& \twocell{(m *0 s) *1 ( s *0 1) *1 (1 *0 m )} \ar@3[r]&  \twocell{(2 *0 s ) *1 (1 *0 s *0 1 ) *1 (s *0 2) *1(1 *0  m *0 1) *1 (1 *0 m)} \ar@3[d]^{~\twocell{alpha}}\\
\twocell{(1*0 m *0 1) *1 (m *0 1) *1 s} \ar@3[r] & \twocell{(1 *0 m *0 1) *1 (1 *0 s ) *1 (s *0 1) *1 (1 *0 m)} \ar@3[r] & \twocell{(2 *0 s ) *1 (1 *0 s *0 1 ) *1 (s *0 m ) *1 (1 *0 m)}
}$$
$$
\xymatrix{
\twocell{(e *0 2) *1 (s *0 1)*1 (1 *0 m)*1 s}	\ar@3[r] \ar@3[d] & \twocell{(1 *0 e *0 1) *1 (1 *0 m ) *1 s }\ar@3[d] ^{~\twocell{l}}\\
\twocell{ (e *0 s) *1 (m *0 1)}    \ar@3[r]_{\twocell{l}}& \twocell{s} 
}$$
$$
\xymatrix{
\twocell{(2 *0 e) *1 (s *0 1)*1 (1 *0 m)*1 s}	\ar@3[r] \ar@3[d]_{\twocell{r}} & \twocell{(2 *0 e ) *1 (1 *0 s ) *1 (m *0 1 )}\ar@3[r] & \twocell{(1 *0 e *0 1)*1 (m *0 1)} \ar@3[ld] ^{~\twocell{r}}\\
\twocell{ s *1 s}    \ar@3[r]& \twocell{2} 
}$$

$$
\xymatrix{
\twocell{(e *0 2) *1 (m *0 1) *1 s} \ar@3[r] \ar@3[d]_{\twocell{r}~} & \twocell{(e *0 2) *1 (1 *0 s ) *1 (s *0 1) *1 (1*0 m)}\ar@3[d]\\
   \twocell{s}& \twocell{s *1 (1*0 e *0 1) *1 (1*0 m)} \ar@3[l]^{\twocell{l}}
}$$
}

\resizebox{.5\width}{!}{

$$\xymatrix{
\twocell{(1*0 e *0 1) *1 (m *0 1) *1 s} \ar@3[r] \ar@3[d]_{\twocell{r}~} & \twocell{(1 *0 e *0 1) *1 (1 *0 s ) *1 (s *0 1) *1 (1*0 m)}\ar@3[d]\\
   \twocell{s}& \twocell{( s *0 e ) *1 (1*0 m)} \ar@3[l]^{\twocell{r}}
}$$

$$
\xymatrix{
\twocell{(1 *0 e *0 1) *1 (s *0 1)*1 (1 *0 s)*1 (m *0 1)} \ar@3[r]  \ar@3[d] & \twocell{(e *0 s)*1 (m *0 1)} \ar@3[d] ^{~\twocell{l}} \\
\twocell{ (1 *0 e *0 1) *1 (1 *0 m) *1 s} \ar@3[r]_{\twocell{l}} & \twocell{s}
}$$

$$
\xymatrix{
\twocell{(2 *0 e) *1 (s *0 1)*1 (1 *0 s)*1 (m *0 1)}  \ar@3[r]  \ar@3[d] & \twocell{(s) *1 (1 *0 e *0 1)*1 (m *0 1)} \ar@3[d] ^{~\twocell{r}} \\
\twocell{ (2 *0 e ) *1 (1 *0 m) *1 s} \ar@3[r]_{\twocell{r}} & \twocell{s}
}$$
$$\xymatrix{
\twocell{(m *0 2) *1 (s *0 1)*1 (1 *0 m)*1 s} \ar@3[r]\ar@3[d] & \twocell{ ( m *0 s) *1 ( m *0 1) } \ar@3[r]^{\twocell{alpha}}&  \twocell{ (2 *0 s ) *1 (1 *0 m *0 1 ) *1 ( m *0  1) } \\
\twocell{( 1*0 s *0 1) *1 ( s *0 2) *1 (1 *0 m *0 1) *1 (1 *0 m) *1 s} \ar@3[r]_{\twocell{alpha}} & \twocell{(1 *0 s *0 1) *1 (s *0 m ) *1 (1 *0 m) *1 s} \ar@3[r] & \twocell{(1 *0 s *0 1) *1 (2 *0 m ) *1 (1 *0 s ) *1 (m *0 1) }
 \ar@3[u]
}$$
}

\resizebox{.5\width}{!}{
$$\xymatrix{
\twocell{(s *0 2) *1 (1 *0 m *0 1) *1 (m *0 1) *1 s} \ar@3[r]^{\twocell{gamma}} \ar@3[d]& \twocell{(1 *0 m *0 1) *1 (m *0 1) *1 s} \ar@3[r] & \twocell{(1 *0 m *0 1) *1 (1 *0 s ) *1 (s *0 1) *1 (1*0 m )}\ar@3[r] & \twocell{(2 *0 s) *1 (1 *0 s *0 1) *1 (s*0 m) *1 ( 1 *0 m)}\\
\twocell{(s *0 2) *1 (1 *0 m *0 1) *1 (1 *0 s) *1 (s*0 1) *1 (1*0 m)} \ar@3[r]& \twocell{(s*0 s) *1 (1*0 s *0 1) *1 (s*0 m) *1 (1*0 m)} \ar@3[r] & \twocell{(2*0 s) *1 ( 1 *0 s *0 1) *1 ( s*0 2) *1 ( 1 *0 s *0 1) *1 (2 *0 m) *1 ( 1*0 m)}\ar@3[ur]_{\twocell{gamma}}
}
$$

$$
\xymatrix{
\twocell{(s *0 1) *1 (1 *0 s) *1 (s *0 1) *1 (m *0 1)  } \ar@3[r]^{\twocell{tau}} \ar@3[d] 
& \twocell{( s *0 1) *1 (1 *0 s) *1 (m *0 1)}\ar@3[dr] \\
\twocell{(1 *0 s ) *1 (s *0 1) *1 (1 *0 s) *1 (m *0 1)} \ar@3[r] 
& \twocell{(1 *0 s) *1( 1 *0 m) *1 s} \ar@3[r]^{\twocell{tau}} 
& \twocell{(1 *0 m) *1 s}
}$$

$$
\xymatrix{
\twocell{(s *0 2) *1 (1 *0 m *0 1) *1 (s *0 1) *1 (1 *0 s) *1(m *0 1)}  \ar@3[r]\ar@3[d]&
\twocell{(1 *0 s *0 1) *1 (m *0 s) *1 (m *0 1)}  \ar@3[r]^{\twocell{alpha}} & 
\twocell{(1 *0 s *0 1) *1 (2 *0 s) *1 (1 *0 m *0 1) *1(m *0 1)}  \ar@3[d] 
\\
\twocell{(s *0 2) *1 (1 *0 m *0 1) *1 (1 *0 m) *1 s}   \ar@3[r]^{\twocell{alpha}} & 
\twocell{(s *0 m) *1 (1 *0 m) *1 s}   \ar@3[r] &
\twocell{(2 *0 m) *1 (1 *0 s) *1 ( m *0 1) }
}$$
}

\resizebox{.5\width}{!}{
$$
\xymatrix{
\twocell{(s *0 1) *1 (1*0 s) *1 (s *0 1)*1 (1 *0 m)*1 s} 	  \ar@3[r]\ar@3[d]&
\twocell{(1 *0 s)*1 (s *0 1) *1 (1*0 s) *1  (1 *0 m)*1 s} 	 \ar@3[r]^{\twocell{tau}} & 
\twocell{(1*0 s) *1 (s *0 1) *1 (1 *0 m)*1 s}  \ar@3[r]		&
\twocell{(1*0 s) *1 (1 *0 s) *1 ( m *0 1)}	\ar@3[dl]\\
\twocell{(s*0 1) *1 (1*0 s) *1 (1 *0 s) *1 (m *0 1)}   \ar@3[r] &
\twocell{(s *0 1) *1 (m *0 1)}\ar@3[r]_{\twocell{tau}} &
\twocell{(m *0 1)}
}$$

$$\xymatrix{
\twocell{(s *0 2) *1 (1*0 s *0 1) *1 (s *0 m)*1 (1 *0 m) *1 s} \ar@3[r] \ar@3[d]&  
\twocell{(1 *0 s *0 1) *1 (s *0 2) *1 (1 *0 s *0 1) *1 (2 *0 m)*1 (1 *0 m) *1 s} \ar@3[r] ^{\twocell{gamma}} &
 \twocell{(1 *0 s *0 1) *1 (s *0 m) *1 (1 *0 m) *1 s }\ar@3[r] &
\twocell{(1 *0 s *0 1) *1 (2 *0 m) *1 (1 *0 s) *1  (m *0 1)  }\ar@3[dl] \\
 \twocell{(s *0 2) *1 (1 *0 s *0 1) *1 (2 *0 m) *1 (1 *0 s ) *1 (m *0 1)} \ar@3[r]& 
\twocell{(s *0 s) *1 (1 *0 m *0 1) *1 (m *0 1)} \ar@3[r]_{\twocell{gamma}} &
\twocell{(2 *0 s) *1 (1 *0 m *0 1) *1 (m *0 1 )}
}$$

$$
\xymatrix{
\twocell{(s *0 1) *1 (1*0 s) *1 (s *0 1) *1 (1 *0 s) *1 (m *0 1)}  \ar@3[r]\ar@3[d]&
\twocell{(1*0 s) *1 (s *0 1) *1 (1 *0 s)*1 (1 *0 s) *1 (m *0 1)} \ar@3[r]& 
\twocell{(1*0 s) *1 (s *0 1) *1 (m *0 1)}  \ar@3[d]_{\twocell{tau}} \\
\twocell{(s *0 1) *1 (1 *0 s ) *1 (1 *0 m )*1 s}  \ar@3[r]_{\twocell{tau}} & 
\twocell{(s *0 1) *1 (1 *0 m )*1 s}   \ar@3[r] &
\twocell{(1 *0 s) *1 ( m*0 1 )} 
}$$
}

\resizebox{.5\width}{!}{
$$
\xymatrix{
\twocell{(s *0 2) *1 (1*0 s *0 1) *1 (s *0 m)*1 (1 *0 s) *1 (m *0 1)} \ar@3[r] \ar@3[d]&  
\twocell{(1*0 s *0 1) *1 (s *0 2) *1 (1*0 s *0 1) *1 (2 *0 m)*1 (1 *0 s) *1 (m *0 1)} \ar@3[r] &
 \twocell{(1 *0 s *0 1) *1 (s*0 s) *1 (1 *0 m *0 1) *1 (m*0 1) }\ar@3[r]^{\twocell{gamma}} &
\twocell{(1 *0 s *0 1) *1 (2*0 s) *1 (1 *0 m *0 1) *1 (m*0 1)  } \\
 \twocell{(s*0 2) *1 (1 *0 s *0 1) *1 (2*0 m) *1 (1 *0 m ) *1 s} \ar@3[r]_{\twocell{gamma}}& 
\twocell{(s*0 m) *1 (1*0 m) *1 s} \ar@3[r] & 
\twocell{(2*0 m ) *1 ( 1 *0 s ) *1 (m*0 1)}\ar@3[ur]
}$$

$$\xymatrix{
\twocell{(s *0 2) *1 (1*0 s *0 1) *1 (s *0 s)*1 (1 *0 m *0 1) *1 (m *0 1)} \ar@3[r] \ar@3[d]_{\twocell{gamma}}&  
\twocell{(1*0 s *0 1) *1 (s *0 2) *1 (1*0 s *0 1) *1 (2 *0 s)*1 (1 *0 m*0 1) *1 (m *0 1)} \ar@3[r] &
 \twocell{(1 *0 s *0 1) *1 (s*0 m) *1 (1 *0 s) *1 (m*0 1) }\ar@3[r] &  \twocell{(1 *0 s *0 1) *1 (2*0 m) *1 (1*0 m) *1 s  } \ar@3[dl]^{\twocell{gamma}}\\
 \twocell{(s*0 2) *1 (1 *0 s *0 1) *1 (2*0 s) *1 (1 *0 m *0 1 ) *1 (m*0 1)} \ar@3[r]& \twocell{(s*0 m) *1 (1*0 s) *1 (m*0 1)} \ar@3[r] & \twocell{(2*0 m ) *1 ( 1 *0 m ) *1 s}
}$$
$$
\xymatrix{
\twocell{(s *0 2) *1 (1*0 s *0 1) *1 (m *0 s)*1 (m *0 1)} \ar@3[r]\ar@3[d]_{\twocell{alpha}~}& \twocell{(1*0 m *0 1) *1 ( s *0 1) *1 (1 *0 s ) *1 (m *0 1)} \ar@3[r]&  \twocell{(1 *0 m *0 1)  *1 (1 *0 m) *1 s} \ar@3[d]^{~\twocell{alpha}}\\
\twocell{(s *0 2) *1 (1*0 s *0 1) *1 (2 *0 s)*1 (1 *0 m *0 1 ) *1 (m *0 1)} \ar@3[r] & \twocell{(s *0 m) *1 (1 *0 s) *1 (m *0 1)} \ar@3[r] & \twocell{(2 *0 m) *1 ( 1 *0 m) *1 s}
}$$
}

\item The $28$ \emph{strongly-foldable} peaks:

\resizebox{.5\width}{!}{
$$
\xymatrix{
\twocell{s *1 s *1 s }\ar@3@/_/[r] \ar@3@/^/[r] & \twocell{s}
}$$
$$
\xymatrix{
\twocell{(m*0 1) *1 s *1 s} \ar@3[r] \ar@3[d] & \twocell{(1 *0 s ) *1 (s *0 1 ) *1 (1 *0 m )*1 s } \ar@3[d]\\
\twocell{ m*0 1}    & \twocell{(1*0 s)*1(1*0 s) *1 (m*0 1)} \ar@3[l] 
}$$
$$
\xymatrix{
\twocell{(e*0 1) *1 s *1 s } \ar@3[d]\ar@3[r] &  \twocell{e*0 1}\\
\twocell{(1*0 e) *1 s} \ar@3[ur]
}$$
$$
\xymatrix{
\twocell{(1*0 e) *1 s *1 s }\ar@3[r] \ar@3[d]&  \twocell{1*0 e}\\
\twocell{(e*0 1) *1 s} \ar@3[ur]
}$$
$$
\xymatrix{
\twocell{(e*0 e) *1 s }\ar@3@/_/[r] \ar@3@/^/[r] & \twocell{e*0 e}
}$$
$$
\xymatrix{
\twocell{( e *0 2) *1 (s *0 1)*1 (1 *0 s)*1 (s *0 1)}  \ar@3[r] \ar@3[d] & \twocell{(1*0 e *0 1) *1 (1 *0 s ) *1 (s *0 1 )}\ar@3[r] & \twocell{s*0 e}\\
\twocell{( e *0 2) *1 ( 1 *0  s) *1 (s *0 1) *1 (1 *0  s) }    \ar@3[r]& \twocell{s *1 (1 *0 e *0 1 ) *1 (1 *0  s) } \ar@3[ur] 
}$$

$$
\xymatrix{
\twocell{(1 *0 e *0 1) *1 (s *0 1)*1 (1 *0 s)*1 (s *0 1)}  \ar@3[r] \ar@3[d] & \twocell{(e *0 s ) *1 (s *0 1 )}\ar@3[r] & \twocell{s*1 (1 *0 e *0 1)}\\
\twocell{( 1 *0 e *01) *1 ( 1 *0  s) *1 (s *0 1) *1 (1 *0  s) }    \ar@3[r]& \twocell{(s *0 e) *1 (1 *0  s) } \ar@3[ur] 
}$$
}

\resizebox{.5\width}{!}{
$$
\xymatrix{
\twocell{( 2 *0 e) *1 (s *0 1)*1 (1 *0 s)*1 (s *0 1)}  \ar@3[r] \ar@3[d] & \twocell{(1*0 e *0 1) *1 (s *0 1 ) *1 (1 *0 s )}\ar@3[r] & \twocell{e *0 s}\\
\twocell{( 2 *0 e) *1 ( 1 *0  s) *1 (s *0 1) *1 (1 *0  s) }    \ar@3[r]& \twocell{s *1 (1 *0 e *0 1 ) *1 (s *0 1) } \ar@3[ur] 
}$$
$$
\xymatrix{
\twocell{(1 *0 e *0 1) *1 (s *0 1)*1 (1 *0 m)*1 s }	\ar@3[r] \ar@3[d] & \twocell{ (e *0 m ) *1 s }\ar@3[d]\\
\twocell{ (1*0 e *0 1 ) *1 (1 *0 s ) *1 ( m *0 1 )}    \ar@3[r]& \twocell{ m *0 e} 
}$$
$$
\xymatrix{
\twocell{(2 *0 e) *1 (m *0 1) *1 s} \ar@3[r] \ar@3[d] & \twocell{ (e *0 m ) }\\
\twocell{ (2 *0 e ) *1 (1 *0 s ) *1 (s *0 1 )*1 (1 *0 m )}    \ar@3[r]& \twocell{ (1 *0 e *0 1) *1 (s *0 1 )*1 (1 *0 m )}\ar@3[u] 
}$$
$$
\xymatrix{
\twocell{(e *0 2) *1 (s *0 1)*1 (1 *0 s)*1 (m *0 1)} 	\ar@3[r] \ar@3[d] & \twocell{ (1 *0 e *0 1 ) *1 ( 1 *0 s) *1 (m *0 1) }\ar@3[d]\\
\twocell{ (e *0 m ) *1 s}    \ar@3[r]& \twocell{ m *0 e} 
}$$
$$
\xymatrix{
\twocell{(s *0 1) *1 (s *0 1)*1 (1 *0 s)*1 (s *0 1)} 	 \ar@3[r] \ar@3[d] & \twocell{ (1 *0 s )*1 (s *0 1) }\\
\twocell{ (s *0 1 )*1 (1 *0 s )*1 (s *0 1) *1 (1 *0 s)}    \ar@3[r]& \twocell{  (1 *0 s )*1 (s *0 1) *1 (1 *0 s ) *1 (1 *0 s )}\ar@3[u] 
}
$$
$$
\xymatrix{
\twocell{(s *0 1)*1 (1 *0 s)*1 (s *0 1) *1 (s *0 1)}  	 \ar@3[r] \ar@3[d] & \twocell{  (s *0 1) *1 (1 *0 s ) }\\
\twocell{ (1 *0 s) *1 (s *0 1 )*1 (1 *0 s )*1 (s *0 1)}    \ar@3[r]& \twocell{ (1 *0 s ) *1  (1 *0 s )*1 (s *0 1) *1 (1 *0 s )}\ar@3[u] 
}$$
$$
\xymatrix{
\twocell{(s *0 1) *1 (s *0 1)*1 (1 *0 m)*1 s}  \ar@3[d]\ar@3[r] &  \twocell{ ( m *0 1 ) *1 s }\\
\twocell{ ( s *0 1 ) *1 ( 1 *0  s ) *1 ( m *0 1 )} \ar@3[ur]
}$$
}

\resizebox{.5\width}{!}{
$$
\xymatrix{
\twocell{(s *0 1 ) *1 (1 *0 m)*1 s *1 s}   	 \ar@3[r] \ar@3[d] & \twocell{ (s *0 1) *1 (1 *0 m )} \\
\twocell{ (1 *0 s ) *1 ( m *0 1 )*1 s}    \ar@3[r]& \twocell{  (1 *0 s ) *1  (1 *0 s )*1 (s *0 1 ) *1 (1 *0 m )}\ar@3[u] 
}$$
$$
\xymatrix{
\twocell{(s *0 2) *1 (1 *0 m *0 1) *1 (s *0 1)*1 (1 *0 s)*1 (s *0 1)} \ar@3[r] \ar@3[d]&  
\twocell{(1 *0 s *0 1) *1 (m *0 s) *1 (s *0 1 )}\ar@3[r] &
 \twocell{(1 *0 s *0 1) *1 ( 2 *0 s) *1 (1 *0 s *0 1) *1 (s *0 2 ) *1 (1 *0 m *0 1) }\ar@3[r] & 
 \twocell{(2 *0 s) *1 (1 *0 s *0 1) *1 ( s *0 s ) *1 (1 *0 m *0 1) } \\
 \twocell{(s*0 2) *1 (1 *0 m *0 1) *1 (1*0 s) *1 (s *0 1 ) *1 (1 *0 s)} \ar@3[r]& 
 \twocell{(s*0 s) *1 (1*0 s *0 1) *1 (s *0 m) *1 (1 *0 s)} \ar@3[r] & 
 \twocell{(s *0 s) *1 (1 *0 s *0 1) *1 ( s *0 2 ) *1 (1 *0 s *0 1) *1  (2 *0 s) *1 (1 *0 m *0 1) } \ar@3[r]&
 \twocell{ (2 *0 s) *1 (1 *0 s *0 1)  *1 ( s *0 2 ) *1(1 *0 s *0 1) *1(1 *0 s *0 1)*1 (2 *0 s)*1 (1 *0 m *0 1) }  \ar@3[u]
 }$$
$$
\xymatrix{
\twocell{(m *0 2) *1 (s *0 1) *1 (1 *0 s) *1 (s *0 1) } \ar@3[r]\ar@3[d]&
\twocell{(1 *0 s *0 1) *1 (s *0 2) *1 (1 *0 m *0 1) *1 (1 *0 s) *1 (s *0 1)} \ar@3[r]& 
\twocell{(1 *0 s *0 1) *1 (s *0 s) *1 (1 *0 s *0 1) *1 (s *0 m)}  \ar@3[r] &
\twocell{(1 *0 s *0 1) *1 (2 *0 s) *1 (1 *0 s *0 1) *1 (s *0 2)*1 (1 *0 s *0 1) *1 (2 *0 m)}  \ar@3[dl] \\
\twocell{(m *0 s) *1 (s *0 1) *1 (1 *0 s) }   \ar@3[r] & 
\twocell{(2 *0 s)  *1 (1 *0 s *0 1) *1 (s *0 2) *1 (1 *0 m *0 1) *1 (1 *0 s)} \ar@3[r] &
\twocell{(2 *0 s)  *1 (1 *0 s *0 1) *1 (s *0  s) *1 (1 *0 s *0 1) *1 (2 *0 m)}
}
$$
$$
\xymatrix{
\twocell{(s *0 1) *1 (s *0 1) *1 (1 *0 s) *1 (m *0 1)} \ar@3[r]  \ar@3[d]&  \twocell{(1*0 s) *1 (m *0 1)}\\
\twocell{(s *0 1) *1 (1 *0 m) *1 s} \ar@3[ur]}
$$
}

\resizebox{.5\width}{!}{
$$
\xymatrix{
\twocell{(s *0 2) *1 (1 *0 m *0 1) *1 (s *0 1)*1 (1 *0 m)*1 s} \ar@3[r]\ar@3[d]& \twocell{(1 *0 s *0 1) *1 (m *0 m) *1 (s)} \ar@3[r]&  \twocell{(1 *0 s *0 1) *1 (2 *0 m ) *1 (1*0 s) *1 (s *0 1) *1 (1 *0 m) } \ar@3[d]\\
\twocell{(s *0 2) *1 (1*0 m *0 1) *1 (1 *0 s)*1 (m *0 1 )} \ar@3[r] & \twocell{(s *0 s) *1 (1 *0 s*0 1) *1 (m *0 m)} \ar@3[r] & \twocell{(2 *0 s) *1 ( 1 *0 m *0 1) *1 (s *0  1) *1 (1 *0 m)}
}
$$
$$
\xymatrix{
\twocell{(m  *0 2) *1 (s *0 1) *1 (1 *0 s) *1 (m *0 1)} \ar@3[r] \ar@3[d] &
\twocell{(1 *0 s *0 1) *1 (s *0 2) *1 (1 *0 m *0 1) *1 (1 *0 s) *1 (m *0 1)} \ar@3[r]&
\twocell{(1 *0 s *0 1) *1 (s *0 s) *1 (1 *0 s *0 1) *1 (m *0 m)}\ar@3[d]\\
\twocell{(m *0 m) *1 s} \ar@3[r]&
\twocell{(2 *0 m) *1 (1 *0 s) *1 (s *0 1) *1 (1 *0 m)}&
\twocell{( 1 *0 s *0 1) *1 (2 *0 s) *1 (1 *0 m *0 1) *1 (s *0 1) *1 (1 *0 m)} \ar@3[l]
}$$
$$
\xymatrix{
\twocell{(s *0 1) *1 (1*0 s) *1 (s *0 1)*1 (1 *0 s)*1 (s *0 1)}  \ar@3[r]\ar@3[d]&
\twocell{(1*0 s)*1 (s *0 1) *1 (1*0 s) *1 (1 *0 s)*1 (s *0 1)} \ar@3[r] &  
\twocell{(1 *0 s) *1 (s*0 1) *1  (s *0 1)}  \ar@3[d]\\
\twocell{(s *0 1) *1 (1*0 s) *1 (1*0 s) *1 (s *0 1)*1 (1 *0 s)}  \ar@3[r] & 
\twocell{(s *0 1) *1 (s*0 1)*1 (1 *0 s)} \ar@3[r] & 
\twocell{(1 *0 s)}
}$$
$$
\xymatrix{
\twocell{(s *0 2) *1 (1*0 s *0 1) *1 (s *0 s)*1 (1 *0 s *0 1) *1 (s *0 2)}	  \ar@3[r]\ar@3[d]&
\twocell{(1*0 s *0 1) *1 (s *0 2) *1 (1*0 s *0 1)*1 (2 *0 s)*1 (1 *0 s *0 1) *1 (s *0 2)}	 \ar@3[r] & 
\twocell{(1*0 s *0 1) *1 (s *0 2) *1 (2 *0 s)*1 (1 *0 s *0 1) *1 (s *0 s)}  \ar@3[r]&
\twocell{(1*0 s *0 1) *1 (2 *0 s) *1 (1*0 s *0 1)*1 (s *0 2)*1 (1 *0 s *0 1) *1 (2 *0 s)}	\ar@3[d]\\
\twocell{(s *0 2) *1 (1*0 s *0 1)*1 (2 *0 s)*1 (1 *0 s *0 1) *1 (s *0 2)*1 (1 *0 s *0 1)}  \ar@3[r] &
 \twocell{(s *0 s) *1 (1 *0 s*0 1)*1 (s *0 s) *1 (1 *0 s *0 1)} \ar@3[r] &
\twocell{(2 *0 s) *1 (1*0 s *0 1) *1 (s *0 2) *1 (1*0 s *0 1)*1 (2 *0 s)*1 (1 *0 s *0 1)} \ar@3[r]&
\twocell{(2 *0 s) *1 (1*0 s *0 1) *1 (s *0 s) *1 (1 *0 s *0 1)*1 (2 *0 s)}
}$$
}

\resizebox{.5\width}{!}{
$$
\xymatrix{
\twocell{(s *0 2) *1 (1*0 s *0 1) *1 (s *0 m)*1 (1 *0 s)*1 (s *0 1)}	  \ar@3[r]\ar@3[d]&
\twocell{(1*0 s *0 1) *1 (s *0 2) *1 (1 *0 s *0 1) *1 (2 *0 m) *1 (1 *0 s) *1 (s *0 1) }  \ar@3[r]&
\twocell{(1*0 s *0 1) *1 (s *0 s) *1 (1 *0 m *0 1)*1 (s *0 1)}	\ar@3[d]\\
\twocell{(s *0 2) *1 (1*0 s *0 1)*1 (2 *0 m)*1 (1 *0 s) *1 (s *0 1)*1 (1 *0 s)}  \ar@3[r] &
\twocell{(s *0 s) *1 (1 *0 m *0 1)*1 (s *0 1)*1 (1 *0 s)} \ar@3[r] &
\twocell{(2 *0 s) *1 (1*0 s *0 1) *1 (m *0 s)}
}$$

$$
\xymatrix{
\twocell{(s *0 2) *1 (1*0 s *0 1) *1 (s *0 s)*1 (1 *0 m *0 1) *1 (s *0 1)}	  \ar@3[r]\ar@3[d]&
\twocell{(1*0 s *0 1) *1 (s *0 2) *1 (1*0 s *0 1) *1 (2 *0 s)*1 (1 *0 m *0 1) *1 (s *0 1)}	 \ar@3[r] & 
\twocell{(1*0 s *0 1) *1 (s *0 m) *1 (1 *0 s) *1 (s *0 1)}  \ar@3[r]&
\twocell{(1*0 s *0 1) *1 (2 *0 m) *1 (1 *0 s) *1 (s *0 1)*1 (1 *0 s)} \ar@3[dl]\\
\twocell{(s *0 2) *1 (1*0 s *0 1) *1 (2 *0 s)*1 (1 *0 s *0 1) *1 (m *0 2)}	  \ar@3[r] &
\twocell{(s *0 s)*1 (1 *0 s *0 1) *1 (m *0 s)} \ar@3[r] &
\twocell{(2 *0 s) *1 (1*0 m *0 1) *1 (s *0 1) *1 (1 *0 s)}
}$$
$$
\xymatrix{
\twocell{(s *0 1) *1 (1*0 s) *1 (m *0 1) *1 s}  \ar@3[r] \ar@3[d] &
\twocell{(s *0 1)*1 (1*0 s) *1 (1*0 s) *1 (s *0 1) *1 (1 *0 m)} \ar@3[r] &
\twocell{(s *0 1)*1 (s *0 1) *1 (1 *0 m)} \ar@3[dl]\\
\twocell{( 1 *0 m) *1  s *1 s  }    \ar@3[r]&
\twocell{1*0 m} 
}$$
}

\resizebox{.5\width}{!}{
$$
\xymatrix{
\twocell{(s *0 2) *1 (1*0 s *0 1) *1 (m *0 s)*1 (s *0 1)}  \ar@3[r]\ar@3[d]&
 \twocell{(1 *0 m *0 1)*1 (s *0 1) *1 (1 *0 s) *1 (s *0 1)} \ar@3[r]& 
 \twocell{(1 *0 m *0 1)*1 (1 *0 s) *1 (s *0 1) *1 (1 *0 s)}  \ar@3[r]&
\twocell{(2 *0 s) *1 (1 *0 s *0 1) *1 ( s *0 m) *1 (1 *0 s)}   \\
\twocell{(s *0 2) *1 (1 *0 s *0 1) *1 (2 *0 s) *1 (1 *0 s *0 1) *1 (s *0 2) *1 (1 *0 m *0 1)}  \ar@3[r] & 
\twocell{(s *0 s) *1 (1 *0 s *0 1) *1 (s *0 s) *1 (1 *0 m *0 1)} \ar@3[r] &
\twocell{(2 *0 s) *1 (1 *0 s *0 1) *1 (s *0 2) *1 (1 *0 s *0 1 ) *1 (2 *0 s) *1 (1 *0 m *0 1)}\ar@3[ur]
}$$

$$
\xymatrix{
\twocell{(s *0 2) *1 (1 *0 s *0 1) *1 (m *0 m)*1 s}  \ar@3[r]\ar@3[d]&
\twocell{(1 *0 m *0 1) *1 (s *0 1) *1 (1 *0 m) *1 s} \ar@3[r]& 
\twocell{(1 *0 m *0 1) *1 (1 *0 s) *1 (m *0 1)}  \ar@3[d] \\
\twocell{(s *0 2) *1 (1 *0 s *0 1) *1 (2 *0 m )*1 (1 *0 s) *1 (s*0 1 ) *1 (1 *0 m)}  \ar@3[r] & 
\twocell{(s *0 s) *1 (1 *0 m *0 1) *1 (s*0 1 ) *1 (1 *0 m)}  \ar@3[r] &
\twocell{(2 *0 s) *1 (1 *0 s *0 1) *1 ( m*0 m )} 
}
$$
$$
\xymatrix{
\twocell{(s *0 2) *1 (1*0 s *0 1) *1 (s *0 s)*1 (1 *0 s *0 1) *1 (m *0 2)}	 \ar@3[r]\ar@3[d]&
\twocell{(s *0 2) *1 (1 *0 s *0 1) *1 (2 *0 s) *1 (1 *0 m *0 1) *1 (s *0 1)} \ar@3[r]& 
\twocell{(s *0 m) *1 (1 *0 s) *1 (s *0 1)}  \ar@3[dr] \\
\twocell{(1 *0 s *0 1) *1 (s *0 2 )*1 (1 *0 s *0 1) *1 (2 *0 s) *1 (1 *0 s *0 1)*1 (m *0 2)}  \ar@3[r] & 
\twocell{(1 *0 s *0 1) *1 (s *0 s )*1 (1 *0 s *0 1) *1 (m *0 s)}  \ar@3[r] &
\twocell{(1 *0 s *0 1) *1 (2 *0 s) *1 (1 *0 m *0 1) *1 (s *0 1) *1 (1 *0 s) } \ar@3[r] &
\twocell{(2*0 m) *1 ( 1 *0 s) *1 (s *0 1) *1 (1 *0 s)}
}$$
}

\end{itemize}

\newpage

\section{Kelly's peaks $4$-cells}\label{CellyKelly}

In this appendix we  give a list of possible ways to define a $4$-cell  with border the solution of a Kelly-peak or weak-Kelly-peak. These $4$-cells are implicitly defined by given a decomposition of another $4$-cell.

In order to reduce the size of pictures, we note \doublebox{$\D$} the $4$-cell which border the given solution of a critical peak represented by its source $\D$.

\begin{itemize}

\item \twocell{(e *0 2) *1 (m *0 1)*1 m}~:

\resizebox{.5\width}{!}{
$$ \scalebox{0.9}{\xymatrix{
&& {\twocell{(e *0 e*0 2)*1 (( 1 *0 m *0 1) *1 m *0 1) *1 m}} \ar@3^{~~~~~~~~~~~~~~~~=}[d] \ar@3[rr]&& {\twocell{(e *0 e*0 2)*1 ((1 *0 m *0 1) *1 1 *0 m) *1 m}} \ar@3@/^2pc/[dr] \ar@3[d]^{~~~~~~\doublebox{T}} \\
\twocell{(e *0 e*0 2)*1 penta}= & \twocell{(e *0 e*0 2)*1 (( m *0 2) *1 m *0 1) *1 m} \ar@3@/^2pc/[ur]_{\twocell{~~(e *0 2) *1 (tria *0 1) *1 m}} \ar@3[drr]^{~~~~~~~~~~~~~~=}\ar@3[r]& \twocell{(e *0 2) *1 (m *0 1) *1 m}\ar@3[rr]&& \twocell{(e*0 m) *1 m} \ar@3[r]& {\twocell{(e *0 e*0 2)*1 (( 2 *0 m ) *1 (1 *0 m)) *1 m}} & \mbox{{\huge where} }\; \doublebox{T}=\\
&&&{\twocell{(e *0 e*0 2)*1 ( m *0 m)  *1 m }}\ar@3@/_2pc/[urr]\ar@3[ur]_{~~\twocell{(e *0 m) *1 tria}}
}}$$

$$ \scalebox{0.9}{\xymatrix{
&& {\twocell{( e *0 2 ) *1 (m *0 1)*1 (e *0 m) *1 m}}  \ar@3@/_2pc/[ddll] \ar@3@/^2pc/[ddrr]\ar@3[d]\\
&=& \twocell{(e *0 2) *1 (m *0 1) *1 m} \ar@3[dl]\ar@3[dr]&=\\
\twocell{ (e *0 m) *1 m} \ar@3[r] & \twocell{m} && \twocell{(e *0 m ) *1 m} \ar@3[ll]&  \twocell{ ( e *0 m ) *1 (e *0 m) *1 m} \ar@3[l]\ar@3@/^2pc/[llll]_{\raisebox{1ex}{=}}
}} \; ;$$
}


\item \twocell{(2 *0 e) *1 (m *0 1)*1 m}~:

\resizebox{.5\width}{!}{
$$ \scalebox{0.9}{ \xymatrix{
&& {\twocell{(2 *0 e *0 e)*1 (( 1 *0 m *0 1) *1 m *0 1) *1 m}} \ar@3[d]^{~~~~~~~~~~~~~~~~=}_{\doublebox{T'}~~~~~} \ar@3[rr]&& {\twocell{(2 *0 e *0 e)*1 ((1 *0 m *0 1) *1 1 *0 m) *1 m}} \ar@3@/^2pc/[dr] \ar@3[d]^{~~\twocell{(2*0 e)*1 (1 *0 tria) *1 m}} \\
\twocell{(2 *0 e *0 e)*1 penta} = &\twocell{(2 *0 e *0 e)*1 (( m *0 2) *1 m *0 1) *1 m} \ar@3@/^2pc/[ur]\ar@3[drr]^{~~~~~~~~~~~~~~=} \ar@3[r] & \twocell{(m *0 e) *1 m}\ar@3[rr]&& \twocell{(2*0 e)*1 (1*0 m) *1 m} \ar@3[r]& {\twocell{(2 *0 e *0 e)*1 (( 2 *0 m ) *1 (1 *0 m)) *1 m}}& 
\mbox{\huge{where} } \; \doublebox{T'} = \\
&&&{\twocell{(2 *0 e *0 e)*1 ( m *0 m)  *1 m }}\ar@3@/_2pc/[urr]\ar@3[ur]_{~~\twocell{(m *0 e) *1 tria}}
}}$$

$$ \scalebox{0.9}{\xymatrix{
&& {\twocell{(2 *0 e *0 e)*1 (( 1 *0 m *0 1) *1 m *0 1) *1 m}}  \ar@3@/_2pc/[ddll] \ar@3@/^2pc/[ddrr]\ar@3[d]\\
&=& \twocell{(2 *0 e) *1 (m *0 1)*1 m} 	\ar@3[dl]\ar@3[dr]&=\\
\twocell{(2 *0 e *0 e)*1 (( m *0 2) *1 m *0 1) *1 m} \ar@3@/_2pc/[rrrr]^{\raisebox{1 ex} {=}}\ar@3[r] &  \twocell{(m *0 e) *1 m} \ar@3[rr]&&\twocell{m}& \twocell{(m *0 e) *1 m} \ar@3[l]
}} \; ;$$
}


\begin{multicols}{2}

\item  \twocell{(e *0 e) *1 m}~:

\resizebox{.5\width}{!}{
$$ \scalebox{0.9}{\xymatrix{
&{\twocell{((e *0 e *0  e)*1 (m *0 1)) *1 m}} \ar@3@/^1pc/[dddr]  \ar@3@/_1pc/^{ ~~= }[dddr] \ar@3[rrrr] &&&&
{\twocell{((e *0 e *0  e)*1 (1 *0 m))*1 m}}\ar@3[dl] \ar@3@/_3pc/_{\twocell{e *0 e}~ \circ ~\doublebox{\twocell{(m *0 e) *1 m}}} ^{ ~~= }[dddlll]  \ar@3@/^4pc/_{ ~~= }[dddlll]\\
\twocell{(e *0 e) *1 tria} = &&&& \twocell{(e *0 e) *1 m} \ar@3[dl]\\
&&& \twocell{e} \\
&&{~~ \twocell{(e *0 e) *1 m}~~} \ar@3@/^1pc/[ur]  \ar@3@/_1pc/[ur]
}} \; ;$$
}

\item  \twocell{(1*0 e) *1 s *1 m}~: if \resizebox{.8\width}{!}{$\doublebox{ T''}=\twocell{(e *0 2)} ~ \circ \doublebox{\twocell{(m *0 e) *1 m}}$} \;, then

\resizebox{.5\width}{!}{
$$ \scalebox{0.9}{ \xymatrix{
&{\twocell{((e *0 1 *0 e) *1 (s *0 1) *1 (m *0 1)) *1 m}} \ar@3[ddd]^{~~~~~~~ \comm  } \ar@3[dr]^{~~~~~~~~~~~~~~=}   \ar@3[rrrr] &&&& 
{\twocell{(e *0 1)*1 s *1 m}} \ar@3[dll] \ar@3[ddl]
\\
\twocell{(e *0 1) *1 s} ~ \circ \doublebox{\twocell{(m *0 e) *1 m}} = &&\twocell{(1 *0 e) *1 (m *0 e) *1 m} \ar@3[d]\ar@3[r]& 
\twocell{(e *0 1) *1 m} \ar@3[d]& 
\\
&&\twocell{(e *0 1) *1 (e *0 m) *1 m}  \ar@3@/_1pc/^{ ~~\doublebox{T'' }} [ur]  \ar@3@/_3pc/^{\raisebox{2 ex}{=} }[rr]& 
\twocell{1} & \twocell{(1 *0 e) *1 m} \ar@3[l] 
\\
&\twocell{(e *0 1) *1 (s *0 e) *1 (1 *0 m) *1 m} \ar@3@/_7pc/[uuurrrr]\ar@3[ur]_{~~~~~~=} 
}}$$
}

\columnbreak

\item \twocell{(e*0 1) *1 s *1 m}	~: 

\resizebox{.5\width}{!}{
$$ \scalebox{0.9}{\xymatrix{
&
\twocell{(e *0 1 *0 e) *1 (1 *0 s) *1 (m *0 1) *1 m} \ar@3[ddd]^{~~~~~~~ \comm } \ar@3[dr]^{~~~~~~~~~~~~~~=}   \ar@3[rrrr] &&&&
 {\twocell{(1 *0 e)*1 s *1 m}} \ar@3[dll] \ar@3[ddl]
 \\
 \twocell{(1 *0 e) *1 s} ~ \circ \doublebox{\twocell{(e *0 2) *1 (m *0 1) *1 m}} = &
&\twocell{(e *0 e *0 1) *1 (m *0 1) *1 m} \ar@3[d]\ar@3[r]& 
\twocell{(e *0 1) *1 m} \ar@3[d]& 
\\
&&\twocell{(e *0 1) *1 (e *0 m) *1 m}  \ar@3[ur]^{\twocell{(e *0 1) *1 tria} }  \ar@3@/_3pc/^{\raisebox{2 ex}{=} }[rr]& 
\twocell{1} & \twocell{(1 *0 e) *1 m} \ar@3[l] \\
&
\twocell{(e *0 1 *0 e) *1 (1 *0 s) *1 (1 *0 m) *1 m} \ar@3@/_7pc/[uuurrrr]\ar@3[ur]_{~~~~~~=} 
}} $$
}

\item \twocell{(2 *0 e) *1 (s *0 1)*1 (1 *0 m)*1 m} ~: 

\resizebox{.5\width}{!}{
$$ \scalebox{0.9}{\xymatrix{
&\twocell{(2 *0 e) *1 ( s *0 1) *1 (m *0 1) *1 m } \ar@3[rrr]  \ar@3^{~~~~\comm}[dd] \ar@3^{~~~~~~~~~\comm}[dr] &&&
\twocell{(2 *0 e) *1 (s *0 1)*1 (1 *0 m)*1 m} \ar@3[dll] \ar@3[dd]\\
\twocell{( 2 *0 e )*1 g} +
&& \twocell{s *1 m} \ar@3[r]& \twocell{m}\\
&\twocell{(2 *0 e) *1 (m *0 1) *1 m}\ar@3@/^/[urr]\ar@3[rrr]^{\raisebox{3 ex}{\doublebox{\twocell{ (m *0 e) *1 m}}}} &&&
\twocell{(2 *0 e) *1 (1 *0 m) *1 m} \ar@3[ul]
}}$$
}

\item \twocell{(1 *0 e *0 1) *1 (s *0 1)*1 (1 *0 m)*1 m}~:

\resizebox{.5\width}{!}{
$$ \scalebox{0.9}{\xymatrix{
&\twocell{(1 *0 e *0 1) *1 ( s *0 1) *1 (m *0 1) *1 m } \ar@3[rrr]  \ar@3[ddd] \ar@3^{~~~~~~~~~~~~~~\comm}[dr]&&&
\twocell{(1 *0 e *0 1) *1 (s *0 1)*1 (1 *0 m)*1 m} \ar@3[dl] \ar@3[ddd]
\\
\twocell{(1 *0 e *0 1 )*1 g}= && \twocell{(e*0 2) *1 (m *0 1) *1 m} \ar@3[r] \ar@3@/_2pc/^{\raisebox{1 ex}{\doublebox{\twocell{(e *0 2) *1 (m *0 1)*1 m}}}}[dr]&
\twocell{(e *0 m) *1 m}\ar@3[d]
\\
&&&\twocell{m}
\\
&\twocell{(1 *0 e *0 1) *1 (m *0 1) *1 m} \ar@3@/_1pc/[urr] \ar@3^{\raisebox{7 ex}{$\doublebox{ \twocell{(1*0 e) *1 s *1 m} } \circ \twocell{m}$~~~~~~~~~~~~~~~~~~~~}}[rrr] &&&
\twocell{(1 *0 e *0 1) *1 (1 *0 m) *1 m} \ar@3[ul]^{\twocell{tria}~~~~~~}
}}$$
}

\item \twocell{(e *0 2) *1 (s *0 1)*1 (1 *0 m)*1 m} ~:

\resizebox{.5\width}{!}{
$$ \scalebox{0.9}{\xymatrix{
&\twocell{(e *0 2) *1 ( s *0 1) *1 (m *0 1) *1 m } \ar@3[rrr]  \ar@3[ddd] \ar@3^{~~~~~~~~~~~~~~\comm}[dr]&&&
\twocell{(e *0 2) *1 (s *0 1)*1 (1 *0 m)*1 m} \ar@3[dl] \ar@3[ddd]
\\
 \twocell{( e *0 2 )*1 g}= && \twocell{(1 *0 e *0 1) *1 (m *0 1) *1 m} \ar@3[r]    \ar@3@/_2pc/^{\raisebox{2 ex}{~~~\twocell{tria}}}[dr]&
\twocell{(1 *0 e *0 1) *1 (1 *0 m) *1 m}\ar@3[d]
\\
&&& \twocell{m}
\\
&\twocell{(e *0 2) *1 (m *0 1) *1 m}\ar@3@/_1pc/[urr] \ar@3^{\raisebox{7 ex}{$\doublebox{ \twocell{(e*0 1) *1 s *1 m} } \circ \twocell{m}$~~~~~~~~~~~~~~~~~~~~}}[rrr]^{~~~~~~~~~~~~~~~~~~~~~~~~~~\raisebox{4 ex}{\doublebox{\twocell{(e *0 2) *1 (m *0 1) *1 m}}}} &&&
\twocell{(e *0 m) *1 m} \ar@3@/_2pc/[ul]
} }$$
}

\item \twocell{(s *0 2) *1 (1 *0 m *0 1) *1 (m *0 1) *1 m}~: 

\resizebox{.5\width}{!}{
$$ \scalebox{0.9}{\xymatrix{
&
\twocell{(s *0 2) *1 (m *0 2) *1 (m *0 1) *1 m}  \ar@3[r] \ar@3@/^4pc/[rrr]_{\raisebox{5 ex}{~}=} \ar@3[dd]^{~~\twocell{(s *0 2 ) *1 penta}} &
\twocell{(s *0 2 ) *1 (m *0 m ) *1 m} \ar@3[d] \ar@3[r]&
\twocell{(m *0 m ) *1 m}  \ar@3[d]&
\twocell{(m *0 2) *1 (m *0 1) *1 m}  \ar@3[dd]_{\twocell{penta}~~} \ar@3[l]
\\
\twocell{(g *0 1) *1 m} =
&&\twocell{ (s *0 m ) *1 (1 *0 m) *1 m} \ar@3[r]^{\raisebox{6 ex}{\twocell{(2 *0 m ) *1 g}}}&
\twocell{(2 *0 m) *1 (1 *0 m) *1 m} & 
\\ 
&
\twocell{(s *0 2) *1 (1 *0 m *0 1) *1 (m *0 1) *1 m}  \ar@3[r] \ar@3@/_3pc/[rrr]&
\twocell{(s *0 2) *1 (1 *0 m *0 1) *1 (1 *0 m) *1 m}    \ar@3[u] &
\twocell{(1 *0 m *0 1) *1 (1 *0 m) *1 m}  \ar@3[u]  & 
\twocell{(1 *0 m *0 1) *1 (m *0 1) *1 m} \ar@3[l]
}} $$
}

\item \twocell{(s *0 1) *1 (s *0 1) *1 (1 *0 m) *1 m} ~: 

\resizebox{.5\width}{!}{
$$ \scalebox{0.9}{\xymatrix{
\twocell{(s *0 1) *1 g}= &\twocell{(s *0 1) *1 (s *0 1) *1 (m *0 1) *1 m} \ar@3[dr]^{~~~~~~~\twocell{g}} \ar@3[rr] &&
\twocell{(s *0 1) *1 (s *0 1) *1 (1 *0 m) *1 m}  \ar@3[r] \ar@3[d] & 
\twocell{(1*0 m) *1 m}
\\
&&
\twocell{(s *0 1) *1 (m *0 1) *1 m} \ar@3[r] &
\twocell{(s *0 1) *1 (1 *0 m) *1 m} \ar@3[ur]
}}$$
}

\item \twocell{(s *0 1) *1 (1 *0 m) *1 (s) *1 m }~:  

\resizebox{.5\width}{!}{
$$ \scalebox{0.9}{
 \xymatrix{
&&\twocell{(s *0 1) *1 (1 *0 m) *1 s *1 m}  \ar@3[rr] \ar@3[d] &  &
\twocell{(s *0 1) *1 (1 *0 m) *1 m} \ar@3[d]
\\
\twocell{s *0 1} \circ \doublebox{\twocell{(s *0 1) *1 (1 *0 s) *1 (m *0 1) *1 m}} =&
\twocell{(s *0 1) *1 (s *0 1) *1 (1 *0 s) *1 (m *0 1) *1 m} \ar@3[ur] \ar@3@/_2pc/[rrd]&
\twocell{(1 *0 s) *1 (m *0 1) *1 m} \ar@3[r]&
\twocell{(1 *0 s) *1 (1 *0 m) *1 m} \ar@3[r]&
\twocell{(1*0 m) *1 m}
\\
&&&\twocell{(s *0 1) *1(s *0 1) *1 (1 *0 s) *1 (1 *0 m) *1 m} \ar@3[rr] \ar@3[u]_{~~~~~~~~\comm}^{\comm~~~~~~~~~~~~}
&& \twocell{(s *0 1) *1(s *0 1) *1 (1 *0 m) *1 m} \ar@3@/_5pc/[uul]^{\doublebox{\twocell{(s *0 1) *1 (s *0 1) *1 (1 *0 m) *1 m}}~~~~~~} \ar@3[ul]
}} $$
}

\item \twocell{(m *0 2) *1 (s *0 1) *1 (1 *0 m) *1 m} ~:

\resizebox{.5\width}{!}{
$$ \scalebox{0.9}{\xymatrix{ &
&
\twocell{(m *0 2) *1 (s *0 1) *1 (m *0 1) *1 m } \ar@3[d]^{~~~~~~~\comm} \ar@3[r] \ar@3[dl]& 
\twocell{(1 *0 s *0 1) *1 (s *0 2) *1 (1 *0 m *0 1) *1 (m *0 1) *1 m } \ar@3@/^5pc/[rrd] \ar@3[d]^{\raisebox{9 ex}{~~~~~$\twocell{s *0 2} \circ \doublebox{\twocell{(s *0 2) *1 (1 *0 m *0 1) *1 (m *0 1) *1 m}}$ }} &&
\\ 
\doublebox{\twocell{(m *0 2) *1 (s *0 1) *1 (m *0 1) *1 m }}\circ \twocell{m}= &
\twocell{( m *0 2) *1 (m *0 1) *1 m} \ar@3[dd]&
\twocell{(m *0 2) *1 (s *0 1) *1 (1 *0 m) *1 m } \ar@3[r] \ar@3[d]_{\twocell{(m *0 2) *1 g}~~~} &
\twocell{(1 *0 s *0 1) *1 (s *0 2) *1 (1 *0 m *0 1) *1 (1 *0 m) *1 m}  \ar@3[r] &
\twocell{(1 *0 s *0 1) *1 (s *0 m) *1 (1 *0 m) *1 m} \ar@3[d]  &
\twocell{(1 *0 s *0 1) *1 (1 *0 m *0 1) *1 (1 *0 m) *1 m} \ar@3[dl]  \ar@3@/^4pc/[ddl]_{\twocell{(1*0 g) *1 m}~~~~~} 
\\ &&
\twocell{(m *0 m ) *1 m} \ar@3[r] &
\twocell{(2 *0 m) *1 (1 *0 m) *1 m}  &
\twocell{(1 *0 s *0 1) *1 (2 *0 m) *1 (1 *0 m) *1 m} \ar@3[l] 
\\&
\twocell{( 1 *0 m *0 1) *1 (m *0 1) *1 m} \ar@3[ur]\ar@3[r] &
\twocell{(1 *0 m *0 1) *1 (1 *0 m) *1 m} \ar@3[rr]^{\raisebox{5 ex}{\twocell{penta}~~~~~~}} &&
\twocell{( 1 *0 m *0 1) *1 (1 *0 m) *1 m} \ar@3[ul]
}} $$
}

\item \twocell{(s *0 2) *1 (1 *0 m *0 1) *1 (s *0 1) *1 (1 *0 m) *1 m} ~: 

\resizebox{.5\width}{!}{
$$ \scalebox{0.9}{\xymatrix{
&&&
\twocell{(s *0 2) *1 (1 *0 m *0 1) *1 (m *0 1 ) *1 m } \ar@3@/^7pc/[rrddd] \ar@3[d]_{\twocell{(s*0 2)*1 (1 *0 m *0 1) *1 g}~~~~~}&&
\\
\Bigg (\doublebox{\twocell{(s *0 1) *1 (1 *0 m) *1 (s) *1 m }}* \twocell{1} \Bigg ) \circ \twocell{m}= &
\twocell{(s *0 2) *1 (1 *0 m *0 1) *1 (s *0 1) *1 (m *0 1 ) *1 m } \ar@3@/_2pc/[rdd]^{~~~=} \ar@3[rd] \ar@3@/^2pc/[urr]&
\twocell{(s *0 2) *1 (1 *0 m *0 1) *1 (s *0 1) *1 (1 *0 m) *1 m}\ar@3[d]\ar@3[r]&
\twocell{(s *0 2) *1 (1 *0 m *0 1) *1 (1 *0 m) *1 m} \ar@3[r]& 
\twocell{(s *0 m) *1 (1 *0 m) *1 m}  \ar@3[d]^{~~~~~~\raisebox{9 ex}{\doublebox{ \twocell{(s *0 2) *1 (1 *0 m *0 1) *1 (m *0 1) *1 m}}}} &
\\&
& \twocell{(1 *0 s *0 1) *1 (m *0 m) *1 m}   \ar@3[r] & 
\twocell{ (1 *0 s *0 1 ) *1 (1 *0 m *0 1) *1 (1 *0 m) *1 m}\ar@3[r]&
\twocell{(2 *0 m) *1 (1 *0 m) *1 m} &
\twocell{ (1 *0 m *0 1) *1 (m *0 1 ) *1 m } \ar@3[l]
\\ &
&
\twocell{(1 *0 s *0 1) *1 (1 *0 m *0 1) *1 (m *0 1 ) *1 m } \ar@3[r] \ar@3[ru]  \ar@3@/_2pc/[rrr]^{\raisebox{1ex}{=}}&
\twocell{(1 *0 s *0 1) *1 (1 *0 m *0 1) *1 (1 *0 m) *1 m } \ar@3[r] \ar@3[ru]^{=~~~~}&
\twocell{ (1 *0 m *0 1) *1 (1 *0 m) *1 m } \ar@3[ru]^{\twocell{(1 *0 g) *1 m}} &
\twocell{ (1 *0 m *0 1) *1 (m *0 1 ) *1 m } \ar@3[u]
}}$$
}

\item \twocell{(s *0 1) *1 (1*0 s) *1 (s *0 1) *1 (1 *0 m) *1 m}~: 

\resizebox{.5\width}{!}{
$$ \scalebox{0.9}{\xymatrix{
&
& \twocell{(s *0 1) *1 (m *0 1) *1 s *1 m} \ar@3@/_3pc/[dddl] \ar@3[dr] \ar@3[rrr] &&&
\twocell{(s *0 1) *1 (m *0 1)  *1 m} \ar@3[d] \ar@3@/^4pc/_{\twocell{g}~~}[ddd]
\\
\twocell{s *0 1} \circ \doublebox{\twocell{(m *0 1) *1 s *1 m}} = &
& \comm & \twocell{(m *0 1) *1 s *1 m} \ar@3^{\raisebox{6 ex}{=}}_{\raisebox{7 ex}{~}\doublebox{\twocell{(m *0 1) *1 s *1 m}}}[rr] \ar@3[d]&&
\twocell{ (m *0 1) *1 m} \ar@3[d]
\\&
& \twocell{(1 *0 s) *1 (s *0 1) *1 (1*0 s) *1 (1 *0 m) *1 m} \ar@3[r] &
\twocell{(1 *0 s) *1 (s *0 1) *1 (1 *0 m) *1 m} \ar@3[r]&
\twocell{(1 *0 s) *1 (1 *0 m) *1 m} \ar@3[r]&
\twocell{(1 *0 m) *1 m}
\\&
 \twocell{(s *0 1) *1 (1*0 s) *1 (s *0 1) *1 (1 *0 m) *1 m}  \ar@3[rr] \ar@3[ur]&&
\twocell{(s *0 1) *1 (1 *0 s) *1 (1 *0 m) *1 m} \ar@3[rr]&&
\twocell{(s *0 1) *1 (1 *0 m) *1 m} \ar@3[u]
}}$$
}

\columnbreak

\item \twocell{(s *0 2) *1 (1*0 s *0 1) *1 (s *0 m) *1 (1 *0 m) *1 m}~: 
if  \resizebox{.8\width}{!}{\doublebox{$\gamma$ vs $\alpha$}= \doublebox{ \twocell{(s *0 2) *1 (1 *0 m *0 1) *1 (m *0 1) *1 m}}}  and

 \resizebox{.8\width}{!}{ \doublebox{S} = \twocell{(s *0 2) *1 (1 *0 s *0 1)}$\circ $ \doublebox{ \twocell{(s *0 2) *1 (1 *0 m *0 1) *1 (m *0 1) *1 m}}}\; , then

\resizebox{.5\width}{!}{
$$ \scalebox{0.9}{
\xymatrix{
&
\twocell{(s *0 2) *1 (1*0 s*0 1) *1 (1*0 m *0 1) *1 (m *0 1) *1 m} \ar@3[dr]^{~~~~~~=} \ar@3[r] &
\twocell{(1*0 s*0 1) *1 (1*0 m *0 1) *1 (m *0 1) *1 m} \ar@3[dr] \ar@3[rr]&&
\twocell{(1*0 m *0 1) *1 (m *0 1) *1 m}\ar@3[d]_{\doublebox{$\gamma$ vs $\alpha$}~~~~~~} 
\\
\Bigg (\; \doublebox{\twocell{(s *0 1) *1 (1 *0 s) *1 (s *0 1) *1 (1 *0 m) *1 m}} * \twocell{1}\Bigg )\circ \twocell{m} =&
\twocell{(s *0 2) *1 (1*0 s*0 1) *1 (s *0 2) *1 (1*0 m *0 1) *1 (m *0 1) *1 m} \ar@3[d]^{~~~~~~ \doublebox{S}} \ar@3[u]\ar@3@/_3pc/[ddd]^{~~~=}&
\twocell{(s *0 2) *1 (1 *0 s*0 1) *1  (1*0 m *0 1) *1 (1 *0 m) *1 m} \ar@3[r]\ar@3[d]^{~~~~~~~=} &
\twocell{(1*0 s*0 1) *1 (1*0 m *0 1) *1 (1 *0 m) *1 m} \ar@3[d] &
\twocell{(1*0 m *0 1) *1 (1 *0 m) *1 m} \ar@3[d]
\\
&
\twocell{(s *0 2) *1 (1*0 s*0 1) *1 (s *0 2) *1 (1*0 m *0 1) *1 (1 *0 m) *1 m} \ar@3[dr]\ar@3[d]&
\twocell{(s *0 2) *1 (1*0 s*0 1) *1 (2 *0 m) *1 (1 *0 m) *1 m} \ar@3[r]& 
\twocell{(s *0 2) *1 (2 *0 m) *1 (1 *0 m) *1 m}   \ar@3[r] &
\twocell{(2 *0 m) *1 (1 *0 m) *1 m}
\\
&
\twocell{(1*0 s*0 1) *1 (s *0 2) *1 (1 *0 s *0 1) *1 (1*0 m *0 1) *1 (1 *0 m) *1 m} \ar@3[dr]^{\raisebox{6ex}{=}~~~}&
\twocell{(s *0 2) *1 (1*0 s*0 1) *1 (s *0 m) *1 (1 *0 m) *1 m}\ar@3[d]\ar@3[u]&
\twocell{(1 *0 s *0 1) *1 ( 2 *0 m )*1 (1 *0 m) *1 m}\ar@3[ru] &
\twocell{ (1 *0 g) *1 m } 
\\
& 
\twocell{(1*0 s*0 1) *1 (s *0 2) *1 (1 *0 s *0 1) *1 (1*0 m *0 1) *1 (m *0 1) *1 m} \ar@3[u] \ar@3[d]&
\twocell{(1 *0 s *0 1) *1 (s *0 2) *1 (1*0 s*0 1) *1 ( 2 *0 m )*1 (1 *0 m) *1 m} \ar@3[r] &
\twocell{(1 *0 s *0 1) *1 (s *0 2) *1 ( 2 *0 m )*1 (1 *0 m) *1 m} \ar@3[u]&
\twocell{(1 *0 s *0 1) *1 (1 *0 m *0 1) *1 (1 *0 m) *1 m} \ar@3[ul] \ar@3@/_2pc/[uu]
\\
&
\twocell{(1*0 s*0 1) *1 (s *0 2) *1 (1*0 m *0 1) *1 (m *0 1) *1 m} \ar@3[r] \ar@3[ru]^{~~~~~~~~\raisebox{6ex}{=}}_{~~~~~~~=}&
\twocell{(1*0 s*0 1) *1 (s *0 2) *1 (1*0 m *0 1) *1 (1 *0 m) *1 m} \ar@3@/^1pc/[ur] \ar@3[rr]^{~~~\raisebox{5 ex}{\twocell{1 *0 s *0 1} $\circ $ \doublebox{$\gamma$ vs $\alpha$} }} &&
\twocell{(1 *0 s *0 1) *1 (1 *0 m *0 1) *1 (m *0 1) *1 m} \ar@3[u] \ar@3@/_4pc/[uuuu]^{=~~}
}}$$
}

\item \twocell{(s *0 1) *1 (1 *0 s) *1 (m *0 1) *1 m}~:

\resizebox{.5\width}{!}{
$$ \scalebox{0.9}{
\xymatrix{
&
\twocell{(s *0 1) *1 (1 *0 s) *1 (m*0 1) *1 s *1 m} \ar@3[r] \ar@3^{~~~~~\comm}[d] \ar@3@/_3pc/[dd] &
\twocell{(s *0 1) *1 (1 *0 s) *1 ( m *0 1) *1 m} \ar@3[rr] \ar@3[d]   && 
\twocell{(s *0 1) *1 (1 *0 s) *1 (1 *0 m)*1 m} \ar@3[d]
\\ \twocell{(s *0 1) *1 (1 *0 s)} \circ \doublebox{\twocell{(m *0 1) *1 s *1 m}}= 
&
\twocell{(1 *0 m) *1 s *1 s *1 m} \ar@3[r] \ar@3@/_3pc/^{\raisebox{2 ex}{\twocell{(1 *0 m) *1 inv}}}[rr]   &
\twocell{(1 *0 m) *1 s *1 m} \ar@3[r] &
\twocell{(1 *0 m) *1 m} &
\twocell{(s *0 1) *1 (1 *0 m) *1 m} \ar@3[l]
\\
&
\twocell{(s *0 1) *1 (1 *0 s) *1 (1 *0 s) *1 (s *0 1) *1  (1 *0 m) *1 m} \ar@3@/_2pc/[rrr] \ar@3^{\raisebox{9 ex}{$\comm$~~~~~~~~~~~~~~~~~~~~~~~~~}}[r] &
\twocell{(s *0 1) *1 (s *0 1) *1 (1 *0 m) *1 m} \ar@3@/_2pc/[ur]&
\comm &
\twocell{(s *0 1) *1 (1 *0 s) *1 (1*0 s) *1 (1 *0 m)*1 m} \ar@3@/_4pc/^{\twocell{(s *0 1) *1 (1 *0 inv) *1 m}~}[uu]  \ar@3[u]
\\
}} $$
}
\columnbreak

\item \twocell{(s *0 2) *1 (1 *0 s *0 1) *1 (m *0 m) *1 m}~:

\resizebox{.5\width}{!}{
$$\scalebox{0.9}{\xymatrix{
&&
\twocell{(s *0 2) *1 (1 *0 s *0 1) *1 (1 *0 m *0 1) *1 (m *0 1) *1 m} \ar@3[r]\ar@3@/^7pc/[rrrd]_{=\raisebox{4ex}{~}}&
\twocell{(s *0 2) *1 (1 *0 s *0 1) *1 (1 *0 m *0 1) *1 (1 *0 m) *1 m}  \ar@3[d]_{\twocell{(s *0 2) *1 (1 *0 s *0 1) *1 penta}~~~~~~}^{~~~~~\twocell{(s *0 2) *1 (1 *0 g) *1 m}} \ar@3[r]&
\twocell{(s *0 2) *1 (1 *0 m *0 1) *1 (1 *0 m) *1 m} \ar@3[d]
\\
\Bigg ( \doublebox{\twocell{(s *0 1) *1 (1 *0 s) *1 (m *0 1) *1 m}} *\twocell{1}\Bigg ) \circ \twocell{m}= &
\twocell{(s *0 2) *1 (1 *0 s *0 1) *1 (m *0 2) *1 (m *0 1) *1 m}  \ar@3@/^2pc/[ur] \ar@3[r] \ar@3@/_2pc/[rdd]^{~~~~=}&
\twocell{(s *0 2) *1 (1 *0 s *0 1) *1 (m *0 m) *1 m} \ar@3[r] \ar@3[d]& 
\twocell{(s *0 2) *1 (1 *0 s *0 1) *1 (2 *0 m) *1 (1 *0 m) *1 m} \ar@3[r]&  
\twocell{(s *0 m) *1 (1 *0 m) *1 m} \ar@3[d] &
\twocell{(s *0 2) *1 (1*0 m *0 1) *1 (m *0 1) *1 m} \ar@3@/^6pc/[ddll]_{ \doublebox{ \twocell{(s *0 2) *1 (1 *0 m *0 1) *1 (m *0 1) *1 m}}} \ar@3[ul]
\\ 
&&
\twocell{(1*0 m *0 1) *1 (s *0 1) *1 (1 *0 m) *1 m} \ar@3[r]& 
\twocell{(1 *0 m *0 1) *1 (1 *0 m) *1 m} \ar@3[r] & 
\twocell{(2 *0 m) *1 (1 *0 m) *1 m}
\\ &&
\twocell{(1*0 m *0 1) *1 (s *0 1)*1 (m *0 1) *1 m}  \ar@3[u]_{~~~\twocell{(1 *0 m *0 1) *1 g}}\ar@3[r]&
\twocell{(1*0 m *0 1) *1 (m *0 1) *1 m} \ar@3[u]
}} $$
}

\end{multicols}
\end{itemize}

\end{document}